\DeclarePairedDelimiter{\ceil}{\lceil}{\rceil}
\colorlet{MyBlue}{DodgerBlue!75!Black}
\colorlet{MyGreen}{DarkGreen!95!Black}
\numberwithin{equation}{section}  
\crefname{example}{Ex.}{Exs.}
\newcommand{\dd}{\:d}
\newcommand{\eps}{\varepsilon}
\newcommand{\dif}{\dd}
\DeclareMathOperator*{\argmin}{argmin}
\DeclareMathOperator*{\argmax}{argmax}
\DeclareMathOperator{\bd}{bd}
\DeclareMathOperator{\diam}{diam}
\DeclareMathOperator{\dist}{dist}
\DeclareMathOperator{\dom}{dom}
\DeclareMathOperator{\supp}{supp}
\DeclareMathOperator{\tr}{tr}
\DeclareMathOperator{\mat}{mat}
\newcommand{\ca}{\mathtt{a}}
\newcommand{\cb}{\mathtt{b}}
\newcommand{\ce}{\mathtt{e}}
\newcommand{\ct}{\mathtt{t}}
\newcommand{\cc}{\mathtt{c}}
\newcommand{\BB}{{\mathbf B}}
\renewcommand{\iff}{\Leftrightarrow}
\newcommand{\eqdef}{\triangleq}
\newcommand{\scrA}{\mathcal{A}}
\newcommand{\scrF}{\mathcal{F}}
\newcommand{\scrI}{\mathcal{I}}
\newcommand{\scrK}{\mathcal{K}}
\newcommand{\scrL}{\mathcal{L}}
\newcommand{\scrO}{\mathcal{O}}
\newcommand{\scrP}{\mathcal{P}}
\newcommand{\scrS}{\mathcal{S}}
\newcommand{\scrU}{\mathcal{U}}
\newcommand{\scrW}{\mathcal{W}}
\newcommand{\scrX}{\mathcal{X}}
\newcommand{\Rn}{\R^n}
\newcommand{\R}{\mathbb{R}}
\newcommand{\N}{\mathbb{N}}
\newcommand{\Mat}{\mathbb{S}}
\newcommand{\bC}{{\mathbf{C}}}
\newcommand{\metric}{\mathsf{d}}
\newcommand{\ball}{\mathbb{B}}
\DeclareMathOperator{\gap}{\mathsf{Gap}}
\DeclareMathOperator{\AS}{A}
\DeclareMathOperator{\FW}{FW}
\theoremstyle{plain}
\newtheorem{theorem}{Theorem}
\newtheorem{corollary}[theorem]{Corollary}
\newtheorem*{corollary*}{Corollary}
\newtheorem{lemma}[theorem]{Lemma}
\newtheorem{proposition}[theorem]{Proposition}
\theoremstyle{definition}
\newtheorem{definition}[theorem]{Definition}
\newtheorem*{definition*}{Definition}
\newtheorem{assumption}{Assumption}
\renewcommand\qed{\hfill\small$\blacksquare$}
\theoremstyle{remark}
\newtheorem{remark}{Remark}
\newtheorem*{remark*}{Remark}
\newtheorem*{notation*}{Notational remark}
\newtheorem{example}{Example}
\numberwithin{theorem}{section}
\numberwithin{remark}{section}
\numberwithin{example}{section}
\DeclarePairedDelimiter{\abs}{\lvert}{\rvert}
\DeclarePairedDelimiter{\inner}{\langle}{\rangle}
\DeclarePairedDelimiter{\norm}{\lVert}{\rVert}
\newacro{ASFW}{Away-Step Frank Wolfe}
\newacro{iid}[i.i.d.]{independent and identically distributed}
\newacro{FOM}{First-order method}
\newacro{LMO}{Linear minimization oracle}
\newacro{LLOO}{Local Linear minimization oracle}
\newacro{FW}{Frank-Wolfe}
\newacro{CG}{Conditional Gradient}
\newacro{SC}{self-concordant}
\newacro{GSC}{generalized self-concordant}
\title{Generalized Self-Concordant Analysis of Frank-Wolfe algorithms}
\date{\today}
\author[1]{\small Pavel Dvurechensky}
\author[2]{\small Kamil Safin}
\author[3]{\small Shimrit Shtern}
\author[4]{\small Mathias Staudigl\thanks{Corresponding Author}}
\affil[1]{\footnotesize Weierstrass Institute for Applied Analysis and Stochastics, Mohrenstr. 39, 10117 Berlin, Germany\\
(\href{mailto:Pavel.Dvurechensky@wias-berlin.de}{Pavel.Dvurechensky@wias-berlin.de})}
\affil[2]{\footnotesize Moscow Institute of Physics and Technology, Dolgoprudny, Russia\\
(\href{mailto:kamil.safin@phystech.edu}{kamil.safin@phystech.edu})}
\affil[3]{\footnotesize Faculty of Industrial Engineering and Management, Technion - Israel Institute of Technology, Haifa, Israel\\
(\href{mailto:shimrits@technion.ac.il}{shimrits@technion.ac.il})}
\affil[4]{\footnotesize Department of Data Science and Knowledge Engineering, Maastricht University, P.O. Box 616, NL\textendash 6200 MD Maastricht, The Netherlands\\
(\href{mailto:m.staudigl@maastrichtuniversity.nl}{m.staudigl@maastrichtuniversity.nl})}
\begin{document}
\maketitle

\begin{abstract}
%
Projection-free optimization via different variants of the \acl{FW} method has become one of the cornerstones of large scale optimization for machine learning and computational statistics. Numerous applications within these fields involve the minimization of functions with self-concordance like properties. Such \acl{GSC} functions do not necessarily feature a Lipschitz continuous gradient, nor are they strongly convex, making them a challenging class of functions for first-order methods. Indeed, in a number of applications, such as inverse covariance estimation or distance-weighted discrimination problems in binary classification, the loss is given by a \acl{GSC} function having potentially unbounded curvature. For such problems projection-free minimization methods have no theoretical convergence guarantee. This paper closes this apparent gap in the literature by developing provably convergent \acl{FW} algorithms with standard $\mathcal{O}(1/k)$ convergence rate guarantees. Based on these new insights, we show how these sublinearly convergent methods can be accelerated to yield linearly convergent projection-free methods, by either relying on the availability of a local liner minimization oracle, or a suitable modification of the away-step Frank-Wolfe method.
\end{abstract}

\section{Introduction}
\label{sec:introduction}
%

Statistical analysis using \acf{GSC} functions as a loss function is gaining increasing attention in the machine learning community \cite{Bac10,Owe13,OstBac18,SunTran18}. Beyond machine learning, \ac{GSC} loss functions are also used in image analysis \cite{Odor16} and quantum state tomography \cite{li2019convergence}.  This class of loss functions allows to obtain faster statistical rates similar to least-squares \cite{MarBacOstRu19}. At the same time, the minimization of empirical risk in this setting is a challenging optimization problem in high dimensions. Thus, without knowledge of specific structure, interior point, or other polynomial time methods, are unappealing. Moreover, large-scale optimization models in machine learning often depend on noisy data and thus high-accuracy solutions are not really needed or obtainable.  All these features make simple optimization algorithms with low implementation costs the preferred methods of choice. In this paper we focus on projection-free methods which rely on the availability of a \ac{LMO}. Such algorithms are known as \ac{CG} or \acf{FW} methods. These classes of gradient-based algorithms belong to the oldest convex optimization tools, and their origins can be traced back to \cite{FraWol56,LevPol66}. For a given convex compact set $\scrX\subset\Rn$, and a convex objective function $f$, \ac{FW} methods solve the smooth convex optimization problem 
\begin{equation}\label{eq:P}\tag{P}
\min_{x\in\scrX}f(x),
\end{equation}
by sequential calls of a \ac{LMO}, returning at point $x$ the target vector 
\begin{equation}\label{eq:s}
s(x)\in \arg\min_{d\in\scrX}\inner{\nabla f(x),d}.
\end{equation}
The selection $s(x)$ is determined via some pre-defined tie breaking rule, whose specific form is of no importance for the moment. Computing this target state is the only computational bottleneck of the method. Progress of the algorithm is monitored via a merit function. The standard merit function in this setting is the \emph{Frank-Wolfe (dual) gap}
\begin{equation}\label{eq:gap}
\gap(x)\eqdef\max_{s\in\scrX}\inner{\nabla f(x),x-s}.
\end{equation}
It is easy to see that $\gap(x)\geq 0$ for all $x\in\scrX$, with equality if and only if $x$ is a solution to \eqref{eq:P}. The vanilla implementation of \ac{FW} (Algorithm \ref{alg:FW}) aims to reduce the gap function by sequentially solving linear minimization subproblems to obtain the target point $s(x)$. 
\begin{algorithm}[b]
 \caption{\texttt{FW-Standard} and \texttt{FW-Line Search}} 
\label{alg:FW}
 \begin{algorithmic}
\STATE {\bfseries Input: } $x^{0}\in\dom f\cap \scrX$ initial state; $\eps>0$ tolerance level
\FOR{$k=1,\ldots$}
 \IF{$\gap(x^{k})>\eps$}
  \STATE Obtain $s^{k}=s(x^{k})$
  \STATE Chose $\alpha_{k}=\frac{2}{k+2}$ (\texttt{FW-Standard}), or via exact line search (\texttt{FW-Line Search})
  \begin{equation}\label{eq:LS}
\alpha_{k}=\argmin_{t\in[0,1]} f((1-t)x^{k}+ts^{k}).
\end{equation}
  \STATE Update $x^{k+1}=x^{k}+\alpha_{k}(s^{k}-x^{k})$.
\ENDIF
\ENDFOR
\end{algorithmic}
\end{algorithm}
As always, the general performance of an algorithm depends heavily on the availability of practical step-size policies $\{\alpha_{k}\}_{k\in\N}$. Two popular choices are either $\alpha_{k}=\frac{2}{k+2}$ (\texttt{FW-Standard}), or an exact line-search (\texttt{FW-Line Search}). Under either choice, the algorithm exhibits an $\scrO(1/k)$ rate of convergence for solving \eqref{eq:P} in case where $f$ is convex and either possess a Lipschitz continuous gradient, or a bounded curvature constant. The latter concept is a slight weakening of the classical Lipschitz gradient assumption, and is the key quantity in the modern analysis of \ac{FW} due to Jaggi \cite{Jag13}. The \emph{curvature constant} is defined as 
\[
\kappa_{f}\eqdef \sup_{x,s\in\scrX,t\in[0,1]} \frac{2}{t^{2}}\left[f(x+t(s-x))-f(x)-t\inner{\nabla f(x),s-x}\right].
\]
Assuming that $\kappa_{f}<\infty$, \cite{Jag13} estimated the iteration complexity of Algorithm \ref{alg:FW} to be $\scrO(1)\frac{\kappa_{f}\diam(\scrX)}{\eps}$. This iteration complexity is in fact optimal \cite{Lan13}, even when $f$ is strongly convex. This is quite surprising, since gradient methods are known to display linear convergence on \emph{well-conditioned} optimization problems, i.e. when the objective function is strongly convex with a Lipschitz continuous gradient \cite{Nes18}.

\paragraph{\acl{FW} for ill-conditioned functions}
In this paper we are interested in functions which are possibly \emph{ill-conditioned}: $f$ is neither assumed to be globally strongly convex, nor to posses a Lipschitz continuous gradient over the feasible set. Recently, many empirical risk minimization problems have been identified to be ill-conditioned, or at least nearly so \cite{OstBac18,MarBacOstRu19,Marteau-Ferey:2019wu}. This explains why the study of algorithms for this challenging class of problems received a lot of attention recently. The role of self-concordance-like properties of loss functions has been clarified in the influential seminal work by Bach \cite{Bac10}. Since then, numerous papers at the intersection between statistics, machine learning and optimization, exploited the self-concordance like behavior of typical statistical loss function to improve existing statistical rate estimates \cite{Owe13,MarBacOstRu19,OstBac18}, or to improve the practical performance of optimization algorithms \cite{CevKyrTra15,CevLiTran15,TraKyrCev14}. Besides applications in statistics, \acl{GSC} functions are of some importance in scientific computing. \cite{TunNem10} construct self-concordant barriers for a class of polytopes arising naturally in combinatorial optimization. \cite{SunTran18} show that the well-known matrix balancing problem minimizes a \ac{GSC} function. We believe that our results are going to be useful in such problems as well.\\

\noindent The main difficulties one faces in minimizing functions with self-concordance like properties can be easily illustrated with a basic, in some sense minimal, example: 
\begin{example}\label{example}
Consider the function $f(x,y)=-\ln(x)-\ln(y)$ where $x,y> 0$ satisfy $x+y=1$. This function is the standard self-concordant barrier for the positive orthant (the log-barrier) and thus $(2,3)$-generalized self-concordant (see Definition \ref{def:GSC}). Its Bregman divergence is easily calculated as 
\begin{align*}
D_{f}(u,v)=\sum_{i=1}^{2}\left[-\ln\left(\frac{u_{i}}{v_{i}}\right)+\frac{u_{i}}{v_{i}}-1\right]\quad u=(u_{1},u_{2}),v=(v_{1},v_{2}).
\end{align*}
Neither the function $f$, nor its gradient, is Lipschitz continuous over the set of interest. In particular the curvature constant is unbounded, i.e. $\kappa_{f}=\infty$. Moreover, if we start from $u^0=(1/4,3/4)$ and apply the standard $2/(k+2)$-step size policy, then $\alpha_0=1$, which leads to $u^1=s(u^0)=(1,0) \notin \dom f$. Clearly, the standard method fails. 
$\hfill\blacklozenge$
\end{example}
The logarithm is one of the canonical members of (generalized) self-concordant functions, and thus the above example is quite representative for the class of optimization problems of interest in this paper. It is therefore clear that the standard analysis of \cite{Jag13}, and all subsequent investigations relying on estimates of the Lipschitz constant of the gradient or the curvature, cannot be applied straightforwardly to the problem of minimizing a \ac{GSC} function via projection-free methods. 

\subsection{Related literature}

The development of \ac{FW} methods for ill-conditioned problems has received quite some attention recently. \cite{Nes18CG} requires the gradient of the objective function to be H\"older continuous and similar results for this setting are obtained in \cite{ben-tal2020lectures,stonyakin2021inexact}. Implicitly it is assumed that $\scrX\subseteq\dom f$. This would also not be satisfied in important \ac{GSC} minimization problems, and hence we do not impose it (e.g.  $0\in\scrX$, but $0\notin\dom f$ in the Covariance Estimation problem in Section~\ref{sec:Covariance}). Specialized to solving a quadratic Poisson inverse problem in phase retrieval, \cite{Odor16} provided a globally convergent \ac{FW} method using the convex and \emph{\ac{SC}} reformulation, based on the PhaseLift approach \cite{CanStrVor13}. They constructed a provably convergent \ac{FW} variant using a new step size policy derived from estimate sequence techniques \cite{Nes83,Bae09}, in order to match the proof technique of \cite{Nes18CG}.

Very recently, two other \ac{FW}-methods for ill-conditioned problems appeared. \cite{LiuCevTra20} employed a \ac{FW}-subroutine for computing the Newton step in a proximal Newton framework for minimizing \acf{SC}-functions over a convex compact set. After the first submission of this work, Professor Robert M. Freund sent us the preprint \cite{ZhaFre20}, in which the \ac{SC}-FW method from our previous conference paper \cite{FWICML20} is refined to minimize a logarithmically homogeneous barrier \cite{NesNem94} over a convex compact set. They also propose new stepsizes for \ac{FW} for minimizing functions with H\"older continuous gradient. None of these recent contributions develop \ac{FW} methods for the much larger class of \ac{GSC}-functions, nor do they consider linearly convergent variants.

\paragraph{Linearly convergent \acl{FW} methods}
Given their slow convergence, it is clear that the application of projection-free methods can only be interesting if projections onto the feasible set are computationally expensive. Various previous papers worked out conditions under which the iteration complexity of projection-free methods can be potentially improved. \cite{GueMar86} obtained linear convergence rates in well conditioned problems under the a-priori assumption that the solution lies in the relative interior of the feasible set, and the rate of convergence explicitly depends on the distance of the solution from the boundary (see also \cite{EpeFreu00,BecTeb04}). If no a-priori information on the location of the solution is available, there are essentially two known twists of the vanilla \ac{FW} to boost the convergence rates. One twist is to modify the search directions via \emph{corrective}, or \emph{away} search directions \cite{Wol70, GueMar86,FreGriMaz17,GutPen20,PenRod18}. The \ac{ASFW} method can remove weight from "bad" atoms in the active set. These \emph{drop steps} have the potential to circumvent the well-known zig-zagging phenomenon of \ac{FW} when the solution lies on the boundary of the feasible set. When the feasible set $\scrX$ is a polytope, \cite{JagLac15} derived linear convergence rates for \ac{ASFW} using the "pyramidal width constant" in the well-conditioned optimization case. Unfortunately, the pyramidal width is the optimal value of a complicated combinatorial optimization problem, whose value is unknown even on simple sets such as the unit simplex. \cite{BecSht17} improved their construction by replacing the pyramidal width with a much more tractable gradient bound condition, involving the "vertex-facet distance". In many instances, including the unit simplex, the $\ell_{1}$-ball and the $\ell_{\infty}$-ball, the vertex-facet distance can be computed (see Section 3.4 in \cite{BecSht17}). In this paper we develop a corresponding away-step \ac{FW} variant for the minimization of a \ac{GSC} function (Algorithm \ref{alg:ASFWGSC} (\texttt{ASFWGSC})), extending \cite{BecSht17} to ill-conditioned problems. 

While we were working on the revision of this paper, Professor Sebastian Pokutta shared with us the recent preprint \cite{carderera2021simple}, where a monotone modification of \texttt{FW-Standard} applied to \ac{GSC}-minimization problems is proposed. They derive a $\scrO(1/k)$ convergence rate guarantee for minimizing \ac{GSC}. Moreover, they exhibit a linearly convergent variant using away-steps. These results have been achieved independently from our work, and they give a nice complementary view on our away-step variant \texttt{ASFWGSC}. The basic difference between our analysis and  \cite{carderera2021simple} is that we exploit the vertex-facet distance instead of the pyramidal width. As already said, this gives explicit and efficiently computable error bounds for some important geometries, and thus allows for a more in-depth complexity assessment.\\

\noindent
The alternative twist to obtain linear convergence is to change the design of the \ac{LMO} \cite{HarJudNem15,GarHaz16,Lan13} via a well-calibrated localization procedure. Extending the work by Garber and Hazan \cite{GarHaz16}, we construct another linearly convergent FW-variant based on \emph{local} linear minimization oracles (Algorithm \ref{alg:LLOO}, \texttt{FWLLOO}).

\subsection{Main contributions and outline of the paper}
In this paper, we demonstrate that projection-free methods extend to a large class of potentially ill-conditioned convex programming problems, featuring self-concordant like properties. Our main contributions can be succinctly summarized as follows: 
\begin{itemize}
\item[(i)] \emph{Ill-Conditioned problems:} We construct a set of globally convergent projection-free methods for minimizing \acl{GSC} functions over convex compact domains. 
\item[(ii)] \emph{Detailed Complexity analysis:} Algorithms with sublinear and linear convergence rate guarantees are derived.
\item[(iii)] \emph{Adaptivity:} We develop new backtracking variants in order to come up with new step size policies which are adaptive with respect to local  estimates of the gradient's Lipschitz constant, or basic parameters related to the self-concordance properties of the objective function. The construction of these backtracking schemes fully exploits the basic properties of \ac{GSC}-functions. Specifically, Algorithm \ref{alg:BacktrackL} (\texttt{LBTFWGSC}) builds on a standard quadratic upper model over which a local search for the Lipschtiz modulus of the gradient, restricted to level sets, can be performed. This local search method is inspired by \cite{PedNegAskJag20}, but our convergence proof is much simpler and direct. Our second backtracking variant  (Algorithm \ref{alg:BacktrackM}, \texttt{MBTFWGSC}) performs a local search for the generalized self-concordance constant. To the best of our knowledge this is the first algorithm which adaptively adjusts the self-concordance parameters on-the-fly. We thus present three new sublinearly converging \ac{FW}-variants which are all adaptive, and share the standard \emph{sublinear} $\scrO(1/\eps)$ complexity bound which is proved in Section \ref{sec:GSCcomplexity}. On top of that, we derive two new linearly converging schemes, either building on the availability of \acf{LLOO} (Algorithm \ref{alg:LLOO} (\texttt{FWLLOO})), or suitably defined Away-Steps (Algorithm \ref{alg:ASFWGSC} (\texttt{ASFWGSC})). 

\item[(iv)] \emph{Detailed Numerical experiments:} We test the performance of our method on a set of challenging test problems, spanning all possible \ac{GSC} parameters over which our algorithms are provably convergent. 
\end{itemize}
\noindent
This paper builds on, and significantly extends, our conference paper \cite{FWICML20}. This previous work exclusively focused on the minimization of standard \acl{SC} functions. The extension to \acl{GSC} functions requires some careful additional steps and a detailed case-by-case analysis that are not simple corollaries of \cite{FWICML20}. On top of that, in this paper we derive two completely new projection-free algorithms, and new proofs of existing algorithms we already introduced in our first publication. In light of these contributions, this paper significantly extends the results of \cite{FWICML20}.


\paragraph{Outline}

Section \ref{sec:GSC} contains necessary definitions and properties for the class of \ac{GSC} functions in a self-contained way. Our algorithmic analysis starts in Section \ref{sec:sublinear} where a new \ac{FW} variant with an analytic step-size rule is presented (Algorithm \ref{alg:FW_GSC}, \texttt{FWGSC}). This algorithm can be seen as the basic template from which the other methods are subsequently derived. Section \ref{sec:GSCcomplexity} presents the convergence analysis for the three sublinearly convergent variants presented in Section \ref{sec:sublinear}. Section \ref{sec:linear} presents the two linearly convergent variants and their convergence analysis. 
 Section \ref{sec:numerics} reports results from extensive numerical experiments using the proposed algorithms and their comparison with the baselines. Section~\ref{sec:conclusion} concludes the paper.

\paragraph{Notation} 
Given a proper, closed, and convex function $f:\Rn\to(-\infty,\infty]$, we denote by $\dom f\eqdef \{x\in\Rn\vert f(x)<\infty\}$ the (effective) domain of $f$. For a set $X$, we define the indicator function $\delta_{X}(x)=\infty$ if $x\notin X$, and $\delta_{X}(x)=0$ otherwise. We use $\bC^{k}(\dom f)$ to denote the class of functions $f:\Rn\to(-\infty,\infty]$ which are $k$-times continuously differentiable on their effective domain. We denote by $\nabla f$ the gradient map, and $\nabla^{2}f$ the Hessian map.

Let $\R_{+}$ and $\R_{++}$ denote the set of nonnegative, and positive real numbers, respectively. We use $\Mat^{n}\eqdef \{x\in\R^{n\times n}\vert x^{\top}=x\}$ the set of symmetric matrices, and $\Mat^{n}_{+},\Mat^{n}_{++}$ to denote the set of symmetric positive semi-definite and positive definite matrices, respectively. Given $Q\in\Mat^{n}_{++}$ we define the weighted inner product $\inner{u,v}_{Q}\eqdef \inner{Qu,v}$ for $u,v\in\Rn$, and the corresponding norm $\norm{u}_{Q}\eqdef \sqrt{\inner{u,u}_{Q}}$. The associated dual norm is $\norm{v}^{\ast}_{Q}\eqdef \sqrt{\inner{v,v}_{Q^{-1}}}$. 
For $Q\in\Mat^{n}$, we let $\lambda_{\min}(Q)$ and $\lambda_{\max}(Q)$ denote the smallest and largest eigenvalues of the matrix $Q$, respectively. 

\section{Generalized self-concordant functions}
\label{sec:GSC}
%

Following \cite{SunTran18}, we briefly introduce the basic properties of the class of \ac{GSC} functions. Let $\varphi:\R\to\R$ be a three-times continuously differentiable function on $\dom\varphi$. Recall that $\varphi$ is convex if and only if $\varphi''(t)\geq 0$ for all $t\in\dom\varphi$. 
\begin{definition}[\cite{SunTran18}]
\label{def:GSC}
Let $\varphi\in\bC^{3}(\dom\varphi)$ be a convex function with $\dom\varphi$ open. Given $\nu>0$ and $M_{\varphi}>0$ some constants, we call $\varphi$  $(M_{\varphi},\nu)$ \emph{generalized self-concordant} (GSC) if 
\begin{equation}\label{eq:GSC1d}
\abs{\varphi'''(t)}\leq M_{\phi}\varphi''(t)^{\frac{\nu}{2}}\qquad\forall t\in\dom\varphi.
\end{equation}
\end{definition}
If $\varphi(t)=\frac{a}{2}t^2+bt+c$ for any constant $a\geq 0$ we get a $(0,\nu)$-generalized self-concordant function. Hence, any convex quadratic function is GSC for any $\nu>0$. Standard one-dimensional examples are summarized in Table \ref{tab:GSC} (based on \cite{SunTran18}).

\begin{table}[htbp]
\begin{center}
\begin{tabular}{|c|c|c|c|c|c|}
\hline
Function name             & Form of $\varphi(t)$      & $\nu$                & $M_{\varphi}$                   & $\dom\varphi$      & \makecell{Lipschitz\\ smooth} \\ \hline
Burg entropy              & $-\ln(t)$                 	& 3                   		 & 2                              		 & $(0,\infty)$       & No                                    \\ \hline
Logistic                  & $\ln(1+e^{-t})$           		& 2                    	& 1                               		& $(-\infty,\infty)$ & Yes                                   \\ \hline
Exponential               & $e^{-t}$                  	& 2                    	& 1                               		& $(-\infty,\infty)$ & Yes                                   \\ \hline
Negative Power            & $t^{-q},q>0$              & $\frac{2(q+3)}{q+2}$ & $\frac{q+2}{\sqrt[q+2]{q(q+1)}}$ & $(0,\infty)$       & No                                    \\ \hline
Arcsine distribution      & $\frac{1}{\sqrt{1-t^{2}}}$ & $\frac{14}{5}$       & $<3.25$                         		& $(-1,1)$           & No                                    \\ \hline
\end{tabular}
\caption{Examples of univariate \ac{GSC} functions (based on \cite{SunTran18}).}
\label{tab:GSC}
\end{center}
\end{table}

This definition generalizes to multivariate functions by requiring GSC along every straight line. Specifically, let $f:\Rn\to(-\infty,+\infty]$ be a closed convex, lower semi-continuous function with effective domain $\dom f$ which is an open nonempty subset of $\Rn$. For $x\in\dom f$ and $u,v\in\Rn$, define the real-valued function $\varphi(t):=\inner{\nabla^{2}f(x+tv)u,u}$. For $t\in\dom\varphi$, one sees that 
$\phi'(t)=\inner{D^{3}f(x+tv)[v]u,u},$ where $D^{3}f(x)[v]$ denotes the third-derivative tensor at $(x,v)$, viewed as a bilinear mapping $\Rn\times\Rn\to\R$. The Hessian of the function $f$ defines a semi-norm $\norm{u}_{x}\eqdef \sqrt{\inner{u,u}_{\nabla^{2}f(x)}}$ for all $x\in\dom f,$ with dual norm $\norm{a}^{\ast}_{x}\eqdef\sup_{d\in\Rn}\{2\inner{d,a}-\norm{d}_{x}^{2}\}.$ If $\nabla^{2}f(x)\in\Mat^{n}_{++}$ then $\norm{\cdot}_{x}$ is a true norm, and $\norm{d}^{\ast}_{x}=\sqrt{\inner{d,d}_{{[\nabla^{2}f(x)]^{-1}}}}$.
\begin{definition}[\cite{SunTran18}]
A closed convex function $f\in\bC^{3}(\dom f)$, with $\dom f$ open, is called $(M_{f},\nu)$ generalized self-concordant of the order $\nu\in [2,3]$ and with constant $M_{f}\geq 0$, if for all $x\in\dom f$ 
\begin{equation}\label{eq:GSC}
\abs{\inner{D^{3}f(x)[v]u,u}}\leq M_{f}\norm{u}^{2}_{x}\norm{v}^{\nu-2}_{x}\norm{v}^{3-\nu}_{2}\qquad\forall u,v\in\Rn. 
\end{equation}
We denote this class of functions as $\scrF_{M_{f},\nu}$. 
\end{definition}
In the extreme case $\nu=2$ we recover the definition $\abs{\inner{D^{3}f(x)[v]u,u}}\leq M_{f}\norm{u}^{2}_{x}\norm{v}_{2}$, which is the generalized self-concordance definition proposed by Bach \cite{Bac10}. If $\nu=3$ and $u=v$ the definition becomes $\abs{\inner{D^{3}f(x)[u]u,u}}\leq M_{f}\norm{u}^{3}_{x}$, which is the standard self-concordance definition due to \cite{NesNem94}.\\

\noindent
Given $\nu \in [2,3]$ and $f\in\scrF_{M_{f},\nu}$, we define the distance-like function
\begin{equation}
\label{eq:distance}
\metric_{\nu}(x,y)\eqdef \left\{\begin{array}{ll} 
M_{f}\norm{y-x}_{2} & \text{ if }\nu=2,\\
\frac{\nu-2}{2}M_{f}\norm{y-x}^{3-\nu}_{2}\cdot\norm{y-x}^{\nu-2}_{x} & \text{if }\nu \in (2,3],
\end{array}\right.
\end{equation}
and the \emph{Dikin Ellipsoid}
\begin{equation}\label{eq:dikin}
\scrW(x;r)\eqdef \{y\in\Rn: \metric_{\nu}(x,y)<r\}\quad\forall (x,r)\in\dom f\times\R.
\end{equation}
Since $f\in\scrF_{M_{f},\nu}$ are closed convex functions with open domain, it follows that they are \emph{barrier functions} for $\dom f$: Along any sequence $\{x_{n}\}_{n\in\N}\subset\dom f$ with $\dist\left(x_{n},\bd(\dom f)\right)\to 0$ we have $f(x_{n})\to\infty$. This fact allows us to use the Dikin Ellipsoid as a safeguard region within which we can perturb the current position $x$ without falling off $\dom f$.
\begin{lemma}[\cite{SunTran18}, Prop. 7]
\label{lem:Dikin}
Let $f\in\scrF_{M_{f},\nu}$ with $\nu\in (2,3]$. We have $\scrW(x;1)\subset\dom f $ for all $x\in \dom f$.
\end{lemma}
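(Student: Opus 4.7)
The plan is to fix $x \in \dom f$ and an arbitrary $y \in \Rn$ with $\metric_\nu(x,y) < 1$, parametrise the segment $y_t \eqdef x + t(y-x)$ for $t \in [0,1]$, and show by a continuation argument that this segment never leaves $\dom f$. Precisely, let $T \eqdef \sup\{t\in [0,1] : y_s \in \dom f \text{ for every }s\in [0,t]\}$; the openness of $\dom f$ and $x \in \dom f$ yields $T > 0$. The barrier property recorded just above the lemma forces $f(y_t) \to \infty$ whenever $y_t$ approaches $\bd(\dom f)$, so it is enough to prove a uniform upper bound on $f(y_t)$ for $t \in [0,T)$: this bound, combined with openness of $\dom f$, then forces $T = 1$ and $y \in \dom f$.

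The core of the argument is to control the scalar function $\phi(t)\eqdef \inner{\nabla^{2} f(y_{t}) (y-x),y-x} = \norm{y-x}_{y_t}^2$ along the segment. Since $f\in\bC^3(\dom f)$, $\phi$ is continuously differentiable on $[0,T)$ with $\phi'(t) = \inner{D^3 f(y_t)[y-x](y-x),y-x}$, and applying \eqref{eq:GSC} with $u=v=y-x$ gives the scalar differential inequality
\[
|\phi'(t)| \;\leq\; M_f\, \norm{y-x}_2^{3-\nu}\, \phi(t)^{\nu/2}.
\]
Assuming $\phi(0) > 0$, I would rewrite this as $|\tfrac{d}{dt}\phi(t)^{-\beta}| \le \beta M_f \norm{y-x}_2^{3-\nu}$ with $\beta \eqdef (\nu-2)/2 > 0$, integrate from $0$ to $t$, and use the bookkeeping identity $\beta M_f \norm{y-x}_2^{3-\nu} \phi(0)^{\beta} = \metric_\nu(x,y)$ to collapse the resulting bound into
\[
\phi(t) \;\leq\; \frac{\phi(0)}{\bigl(1 - t\,\metric_\nu(x,y)\bigr)^{2/(\nu-2)}} \qquad \forall t \in [0,T).
\]
Because $\metric_\nu(x,y) < 1$, the denominator stays bounded away from zero on $[0,1]$, so $\phi$ is bounded by a finite constant $K$ on the entire segment.

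Finally, Taylor's theorem with integral remainder applied to $g(t)\eqdef f(y_t)$, whose second derivative is exactly $\phi(t)$, yields $f(y_t) \leq f(x) + t\abs{\inner{\nabla f(x),y-x}} + K t^2/2$ for all $t\in[0,T)$. If $T<1$, the definition of $T$ forces $y_T \in \bd(\dom f)$, and the barrier property would then force $f(y_t)\to\infty$ as $t\to T^-$, contradicting the bound above; hence $T=1$, and the same bound at the endpoint shows $y_1 = y \in \dom f$. The main obstacle is the exponent bookkeeping in the ODE manipulation, ensuring that the resulting bound on $\phi$ collapses precisely to the threshold $\metric_\nu(x,y)<1$. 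The edge case $\phi(0)=0$, for which $\metric_\nu(x,y)$ is automatically zero, can be dispatched either by a small perturbation $\phi\rightsquigarrow \phi+\eps$ followed by letting $\eps\to 0^+$, or by observing that such directions are recession directions of $\dom f$.
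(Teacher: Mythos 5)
The paper states this lemma without its own proof, citing \cite{SunTran18}, Prop.~7; your continuation argument is essentially the same proof and is correct in its main branch. The exponent bookkeeping — rewriting $|\phi'(t)| \leq M_f\norm{y-x}_2^{3-\nu}\phi(t)^{\nu/2}$ as a bound on $\tfrac{d}{dt}\phi(t)^{-\beta}$ with $\beta = (\nu-2)/2$, checking that $\beta M_f\norm{y-x}_2^{3-\nu}\phi(0)^\beta = \metric_\nu(x,y)$, and integrating to obtain $\phi(t) \leq \phi(0)\bigl(1-t\,\metric_\nu(x,y)\bigr)^{-2/(\nu-2)}$ — is exactly right, and the barrier-plus-Taylor argument correctly rules out $T<1$ and also shows $y_1 \in \dom f$ when $T=1$.

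The only loose end is the dispatch of $\phi(0) = 0$. The perturbation ``$\phi \rightsquigarrow \phi + \eps$'' is not well posed: $\phi$ is not a free parameter but the specific map $t \mapsto \norm{y-x}_{y_t}^2$, which is simultaneously what you bound via the GSC differential inequality and what you plug in as the second derivative in the Taylor step, and $\phi+\eps$ satisfies neither role. The recession-direction route is valid, but it relies on the separate (non-trivial) fact that the kernel of $\nabla^2 f(\cdot)$ is constant across $\dom f$ for $\nu\in(2,3]$, so invoking it here is closer to circular than elementary. The cleanest fix stays inside your own framework: since $\phi \geq 0$, $\phi(0)=0$, and $\phi'(t) \leq M_f\norm{y-x}_2^{3-\nu}\phi(t)^{\nu/2}$ with $\nu/2 > 1$, the right-hand side is locally Lipschitz in $\phi$ near $0$, so the comparison principle with the equilibrium solution $0$ forces $\phi \equiv 0$ on $[0,T)$; then $t\mapsto f(y_t)$ is affine, hence bounded, and the same barrier argument closes the proof.
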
 
The inclusion $\scrW(x;1)\subset\dom f$ for $\nu\in(2,3]$ is a generalization of a well-known classical property of \acl{SC} functions \cite{NesNem94}. It gains relevance for the case $\nu>2$, since when $\nu=2$, we have $\dom f=\Rn$, making the statement trivial.\\

\noindent
The next Lemma gives a-priori local bounds on the function values. 
\begin{lemma}[\cite{SunTran18}, Prop. 10]
\label{lem:Bregman}
Let $x,y\in\dom f$ for $f\in\scrF_{M_{f},\nu}$ and $\nu\in [2,3]$. Then 
\begin{align}
\label{eq:down}
&f(y)\geq f(x)+\inner{\nabla f(x),y-x}+\omega_{\nu}(-\metric_{\nu}(x,y))\norm{y-x}^{2}_{x},\text{ and }\\
&f(y)\leq f(x)+\inner{\nabla f(x),y-x}+\omega_{\nu}(\metric_{\nu}(x,y))\norm{y-x}^{2}_{x},
\label{eq:up}
\end{align}
where, if $\nu>2$, the right-hand side of \eqref{eq:up} holds if and only if $\metric_{\nu}(x,y)<1$. Here $\omega_{\nu}(\cdot)$ is defined as  
\begin{equation}\label{eq:OmegaBreg}
\omega_{\nu}(t)\eqdef \left\{\begin{array}{ll}
\frac{1}{t^{2}}(e^{t}-t-1) & \text{if }\nu=2,\\
\frac{-t-\ln(1-t)}{t^{2}} & \text{if }\nu=3,\\
\left(\frac{\nu-2}{4-\nu}\right)\frac{1}{t}\left[\frac{\nu-2}{2(3-\nu)t}((1-t)^{\frac{2(3-\nu)}{2-\nu}}-1)-1\right] & \text{if }\nu\in(2,3). 
\end{array}\right. 
\end{equation}
\end{lemma}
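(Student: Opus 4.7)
The plan is to reduce both inequalities to a one-dimensional ODE analysis along the segment joining $x$ and $y$, convert the GSC condition \eqref{eq:GSC} into a differential inequality on the second derivative of the restricted function, and then integrate twice to recover the Bregman-type bounds.

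I would first parametrize by $\varphi(t)\eqdef f(x+t(y-x))$ on the open interval $\dom\varphi\eqdef\{t\in\R:x+t(y-x)\in\dom f\}$, which contains $[0,1]$ by convexity of $\dom f$. Direct differentiation gives $\varphi''(t)=\norm{y-x}_{x+t(y-x)}^{2}$ and $\varphi'''(t)=\inner{D^{3}f(x+t(y-x))[y-x](y-x),y-x}$. Specializing \eqref{eq:GSC} with $u=v=y-x$ then yields the scalar inequality
\[
\abs{\varphi'''(t)}\leq M_{f}\,\varphi''(t)^{\nu/2}\norm{y-x}_{2}^{3-\nu}.
\]

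Next I would set $\psi(t)\eqdef\varphi''(t)>0$ and $c\eqdef M_{f}\norm{y-x}_{2}^{3-\nu}$, so that $|\psi'|\leq c\,\psi^{\nu/2}$ is a separable ODE inequality that I can integrate in closed form in three regimes. For $\nu=2$, dividing by $\psi$ and exponentiating gives two-sided bounds $\psi(0)e^{-ct}\leq\psi(t)\leq\psi(0)e^{ct}$. For $\nu=3$, integrating $\psi'/\psi^{3/2}$ produces a rational two-sided bound of the form $\psi(t)\in\bigl[\psi(0)/(1+\tfrac{c}{2}\sqrt{\psi(0)}\,t)^{2},\;\psi(0)/(1-\tfrac{c}{2}\sqrt{\psi(0)}\,t)^{2}\bigr]$. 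For the intermediate range $\nu\in(2,3)$ the same procedure delivers a bound involving the fractional exponent $2(3-\nu)/(2-\nu)$. In all three cases the governing constant reduces, after substituting $\sqrt{\psi(0)}=\norm{y-x}_{x}$ into $\tfrac{\nu-2}{2}c\sqrt{\psi(0)}^{\,\nu-2}$, to exactly $\metric_{\nu}(x,y)$; the upper bound remains finite on $[0,1]$ iff $\metric_{\nu}(x,y)<1$, which is precisely the side condition attached to \eqref{eq:up}. To conclude, I would apply the identity $f(y)-f(x)-\inner{\nabla f(x),y-x}=\varphi(1)-\varphi(0)-\varphi'(0)=\int_{0}^{1}(1-s)\psi(s)\,ds$ (Fubini applied to the double integral of $\psi$), substitute the pointwise upper/lower bounds on $\psi$, and simplify. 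A spot-check for $\nu=2$ already reveals the right target: $\int_{0}^{1}(1-s)e^{cs}\,ds=(e^{c}-c-1)/c^{2}=\omega_{2}(c)$, and the analogous integrals for $\nu=3$ and $\nu\in(2,3)$ produce the remaining two cases of \eqref{eq:OmegaBreg}.

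The main obstacle will be the bookkeeping for $\nu\in(2,3)$: both the antiderivative in the ODE step and the subsequent integration against the weight $1-s$ generate expressions involving the fractional power $(1-\metric_{\nu}(x,y)\,t)^{2(3-\nu)/(2-\nu)}$, and the closing constants must be matched algebraically against the intricate piecewise formula for $\omega_{\nu}$. The endpoint cases $\nu=2$ and $\nu=3$ are much cleaner, as they collapse to the familiar exponential and logarithmic integrals of Bach \cite{Bac10} and the classical self-concordant theory \cite{NesNem94}, respectively.
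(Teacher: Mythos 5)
The paper does not prove this lemma; it imports it directly from \cite{SunTran18} (Prop.~10), so there is no in-paper proof to compare against. Your outline is nonetheless a correct reconstruction of the argument used in that reference, and it would go through as written. Specializing \eqref{eq:GSC} to $u=v=y-x$ along the segment gives $|\varphi'''(t)|\le M_f\varphi''(t)^{\nu/2}\|y-x\|_2^{3-\nu}$; setting $\psi=\varphi''$ and integrating $|\psi'|\le c\,\psi^{\nu/2}$ yields, after factoring, two-sided bounds of the form $\psi(t)\le\psi(0)(1-\metric_\nu(x,y)\,t)^{2/(2-\nu)}$ (and the symmetric lower bound), where the identification $\frac{\nu-2}{2}c\,\psi(0)^{(\nu-2)/2}=\metric_\nu(x,y)$ is exactly your ``governing constant'' observation and exposes the side condition $\metric_\nu(x,y)<1$ needed for the upper bound at $t=1$. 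Plugging the bounds on $\psi$ into the integral remainder $\int_0^1(1-s)\psi(s)\,ds$ and integrating by substitution $u=1-\metric_\nu(x,y)s$ does reproduce the three branches of \eqref{eq:OmegaBreg}: for $\nu\in(2,3)$ the exponents $\alpha+1=\frac{4-\nu}{2-\nu}$ and $\alpha+2=\frac{2(3-\nu)}{2-\nu}$ with $\alpha=\frac{2}{2-\nu}$ deliver precisely the constants $\frac{\nu-2}{4-\nu}$ and $\frac{\nu-2}{2(3-\nu)}$ in the stated formula, and the endpoint cases collapse to $\omega_2$ and $\omega_3$ as you note. Two small things worth making explicit if you write this out: (i) positivity of $\psi$ (which you need to divide by $\psi^{\nu/2}$) is supplied by Assumption \ref{ass:1}'s positive-definite Hessian, or more generally one treats the degenerate case $\psi(0)=0$ separately; (ii) for $\nu\in(2,3]$ the upper estimate on $\psi(t)$ must remain finite on the whole interval $[0,1]$, which is precisely the ``$\metric_\nu(x,y)<1$'' qualifier that the lemma attaches only to \eqref{eq:up} and not to \eqref{eq:down}.
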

The function $\omega_{\nu}(\cdot)$ is strictly convex and one can check that $\omega_{\nu}(t)\geq 0$ for all $t\in \dom(\omega_{\nu})$. These bounds on the function values can be seen as local versions of the standard approximations valid for strongly convex functions, respectively for functions with a Lipschitz continuous gradient (see e.g. \cite{Nes18}, Def. 2.1.3 and Lemma 1.2.3). In particular, the upper bound \eqref{eq:up} corresponds to a local version of the celebrated \emph{descent lemma}, a fundamental tool in the analysis of first-order methods \cite{FOMSurvey}. To emphasize this analogy, we will also refer to \eqref{eq:up} as the \emph{\ac{GSC}-descent lemma}.

\section{\acl{FW} works for generalized self-concordant functions}
\label{sec:sublinear}
%
In this section we describe three provably convergent modifications of Algorithm \ref{alg:FW}, displaying sublinear convergence rates. 
\subsection{Preliminaries}

\begin{assumption}\label{ass:1}
The following assumptions shall be in place throughout this paper:
\begin{itemize}
\item The function $f$ in \eqref{eq:P} belongs to the class $\scrF_{M_{f},\nu}$ with $\nu\in[2,3]$.
\item The solution set $\scrX^{\ast}$ of \eqref{eq:P} is nonempty, with $x^{\ast}\in\scrX^{\ast}$ representing a solution and $f^{\ast}= f(x^{\ast})$ the corresponding objective function value. 
\item $\scrX$ is convex compact and the search direction \eqref{eq:s} can be computed efficiently and accurately.
\item $\nabla^{2}f$ is continuous and positive definite on $\scrX\cap\dom f$.
\end{itemize}
\end{assumption}
\noindent
Define the \emph{Frank-Wolfe search direction} as
\begin{equation}\label{eq:vFW}
v_{\FW}(x)\eqdef s(x)-x.
\end{equation}
We also declare the functions $\ce(x)\eqdef\norm{v_{\FW}(x)}_{x}\text{ and }\beta(x)\eqdef\norm{v_{\FW}(x)}_{2}$ for all $x\in\dom f.$ 

\subsection{A Frank-Wolfe method with analytical step-size}
\label{sec:algo1}
\begin{algorithm}[t]
\caption{\texttt{FWGSC}}
\label{alg:FW_GSC} 
  \begin{algorithmic}
\STATE {\bfseries Input: } $x^{0}\in\dom f\cap \scrX$ initial state, $\eps>0$ error tolerance, and $f\in\scrF_{M,\nu}$.
\FOR{$k=0,\ldots$}
 \IF{$\gap(x^{k})>\eps$}
  \STATE Obtain $s^{k}=s(x^{k})$ from \eqref{eq:s} and define $v^{k}=v_{\FW}(x^{k})$;
  \STATE Obtain $\alpha_{k}=\alpha_{\nu}(x^{k})$ from \eqref{eq:alpha1};
  \STATE Set $x^{k+1}=x^{k}+\alpha_{k}v^{k}$
\ENDIF
\ENDFOR
\end{algorithmic}
\end{algorithm}
\noindent
Our first \acl{FW} method (Algorithm \ref{alg:FW_GSC}, \texttt{FWGSC}) for minimizing \acl{GSC} functions builds on a new adaptive step-size rule, which we derive from a judicious application of the GSC-descent Lemma \eqref{eq:up}. An attractive feature of this new step size policy is that it is available in analytical form, which allows us to do away with any globalization strategy (e.g. line search). This has significant practical impact when function evaluations are expensive.\\

\noindent
Given $x\in\scrX$, set $x^{+}_{t}\eqdef x+t v_{\FW}(x)$, and assume that $\ce(x)\neq 0$. Moving from the current position $x$ to the point $x^{+}_{t}$, we know that $\metric_{\nu}(x,x^{+}_{t})=tM_{f}\delta_{\nu}(x)$, where 
\begin{equation}
\label{eq:delta_x}
\delta_{\nu}(x)\eqdef \left\{\begin{array}{lr} 
\beta(x) & \text{ if }\nu=2,\\
\frac{\nu-2}{2}\beta(x)^{3-\nu}\ce(x)^{\nu-2} & \text{if }\nu>2. 
\end{array}\right.
\end{equation}
Choosing $t\in(0,\frac{1}{M_{f}\delta_{\nu}(x)})$, the GSC-descent lemma \eqref{eq:up} gives us the upper bound
 \begin{align*}
 f(x^{+}_{t})& \leq f(x)+\inner{\nabla f(x),x^{+}_{t}-x}+\omega_{\nu}(\metric_{\nu}(x,x^{+}_{t}))\norm{x^{+}_{t}-x}^{2}_{x} \\
&= f(x)+\inner{\nabla f(x),x^{+}_{t}-x}+\omega_{\nu}\left(tM_{f}\delta_{\nu}(x)\right)t^2\ce(x)^2\\
 &= f(x)-t\gap(x)+\omega_{\nu}\left(tM_{f}\delta_{\nu}(x)\right)t^2\ce(x)^2
 \end{align*}
 For $x\in\dom f\cap\scrX$, define $\eta_{x,M,\nu}:\R_{+}\to(-\infty,+\infty]$ by 
 \begin{equation}
\label{eq:eta_def}
 \eta_{x,M,\nu}(t)\eqdef \gap(x)\left[t-\omega_{\nu}\left(tM\delta_{\nu}(x)\right)t^2\frac{\ce(x)^2}{\gap(x)}\right].
\end{equation}
Note that $\eta_{x,M,\nu}(t)$ is strictly concave on $\dom(\eta_{x,M,\nu})\subseteq[0,\frac{1}{M\delta_{\nu}(x)}]$. This leads to the per-iteration change in the objective function value as
\begin{align*}
f(x^{+}_{t})-f(x)\leq -\eta_{x,M_{f},\nu}(t)\qquad \forall t\in(0,\frac{1}{M_{f}\delta_{\nu}(x)}). 
\end{align*}
Since $\eta_{x,M_{f},\nu}(t)>0$ for $t\in(0,\frac{1}{M_{f}\delta_{\nu}(x)})$, we are ensured that we make progress in reducing the objective function value when choosing a step size within the indicated range. Given the triple $(x,M,\nu)$, we search for a value $t$ such that the per-iteration decrease is as big as possible. Hence, we aim to find $t\geq0$ which solves the concave maximization problem 
\begin{equation}\label{eq:tau}
\sup_{t\geq 0}\eta_{x,M,\nu}(t).
\end{equation}
Call $\ct_{M,\nu}(x)$ a solution of this program. Since we have to stay within the feasible set, we cannot simply use the number $\ct_{M,\nu}(x)$ as our step size as it might lead to an infeasible point. Consequently, we propose the truncated step-size
 \begin{equation}\label{eq:alpha1}
 \alpha_{M,\nu}(x)\eqdef \min\left\{1,\ct_{M,\nu}(x)\right\}\quad\forall x\in\dom f.
 \end{equation}
In Section \ref{sec:GSCcomplexity} we show that this step-size policy guarantees feasibility and a sufficient decrease. 
\begin{remark}
We emphasize that the basic step-size rule is derived by identifying a suitable local majorizing model $f(x)-\eta_{x,M_{f},\nu}(t)$. Minimization with respect to $t$ aligns the model as close as possible to the effective progress we are making in reducing the objective function value. This upper model holds for all \ac{GSC} functions with the same characteristic parameter $(M_{f},\nu)$, and thus, our derived step size strategy is universally applicable to all functions within the class $\scrF_{M_{f},\nu}$. Therefore, akin to \cite{CevKyrTra15,SunTran18}, the derived adaptive step size policy can be regarded as an optimal choice in the analytic worst-case sense.
\end{remark}

\subsection{Backtracking Frank-Wolfe variants}
Algorithm \texttt{FWGSC} comes with several drawbacks. First, it relies on the minimization of a universal upper model derived from the GSC-descent Lemma. This over-estimation strategy leads to a worst-case performance estimate, relying on various state-dependent quantities, such as the local norm $\ce(x^{k})$, and the \ac{GSC} parameters $(M_{f},\nu)$. Evaluating the local norm requires the computation of the matrix-vector product between the Hessian $\nabla^{2}f(x^{k})$, and the \ac{FW} search direction $v_{\FW}(x^{k})$.\footnote{In fact, evaluating the local norm requires the Hessian matrix $\nabla^{2}f(x)$, and thus \texttt{FWGSC} is actually second-order method. At the same time, no inversion of the Hessian is needed. For instance, the matrix-vector product can be efficiently computed when the objective belongs to the class of generalized linear models, where the Hessian is given as a sum of rank 1 matrices.} The \ac{GSC} parameter $M_{f}$ is a global quantity, relating the second and third derivative over the entire domain of the function $f$. Additionally, it restricts the interval of admissible step sizes $(0,\frac{1}{M_{f}\delta_{\nu}(x)})$. Consequently, a local search for this parameter could lead to larger step-sizes, which may improve the performance. Motivated by these facts, this section presents two backtracking variants of the basic Frank-Wolfe method. Both methods are based on the assumption that we can easily answer the question whether a given candidate search point $x$ belongs to the domain of the function $f$, or not. 
\begin{assumption}[Domain Oracle]
\label{ass:DO}
Given a point $x$, it is easy to decide if $x\in\dom f$, or not.
\end{assumption}
\begin{remark}
For many problems such domain oracles are easy to construct. As a concrete example, consider the problem of minimizing the log-barrier function over a compact domain in $\R^{n}_{+}$, which is a standard routine in interior-point methods (e.g. the computation of the \emph{analytic center}). For this problem, a simple domain oracle is a single pass through all the coordinates of the vector $x$ and checking if each entry is positive. The complexity of such an oracle is linear in the number of variables.
\end{remark}

\subsubsection{Backtracking over the Lipschitz constant}
\label{sec:LBTFWGSC}
Our first backtracking variant of \texttt{FWGSC} preforms a local search over the Lipschitz modulus of the gradient over level sets. This produces a nested sequence of level sets visited by the algorithm successively. This kind of backtracking is inspired by the recent paper \cite{PedNegAskJag20}. However, our proof is both simpler and much more direct.

\noindent
\begin{algorithm}[t]
 \caption{\texttt{FWGSC} with backtracking over the Lipschitz parameter (\texttt{LBTFWGSC})}
 \label{alg:BacktrackL} 
 \begin{algorithmic}
\STATE {\bfseries Input: } $x^{0}\in\dom f\cap \scrX$ initial state, $f\in\scrF_{M,\nu}$, $\scrL_{-1}>0$ initial Lipschitz estimate,\\
$\gamma_{u}>1,\gamma_{d}<1$ fixed scaling parameters for the backtracking routine.
\FOR{$k=0,\ldots$}
 \IF{$\gap(x^{k})>\eps$}
  \STATE Obtain $s^{k}=s(x^{k})$ and set $v^{k}=v_{\FW}(x^{k})$
  \STATE Obtain $(\alpha_{k},\scrL_{k})=\mathtt{step}_{L}(f,v^{k},x^{k},\scrL_{k-1})$
  \STATE Update $x^{k+1}=x^{k}+\alpha_{k}v^{k}$
\ENDIF
\ENDFOR
\end{algorithmic}
\end{algorithm}
\begin{algorithm}[t]
 \caption{Function $\mathtt{step}_{L}(f,v,x,\scrL)$ \textcolor{white}{rrwefawefawetawgawegwefwefw}}
 \label{alg:stepL}
\begin{algorithmic}
    \STATE Choose $\tilde{L}\in[\gamma_{d}\scrL,\scrL]$
    \STATE $\alpha=\min\{1,\frac{\gap(x)}{\tilde{L}\norm{v}^{2}_{2}}\}$
    \IF{$x+\alpha v\notin\dom f$ or $f(x+\alpha v)>Q_{L}(x,\alpha,\tilde{L})$ }
   \STATE $\tilde{L}\leftarrow\gamma_{u}\tilde{L}$
    \STATE $\alpha\leftarrow \min\{\frac{\gap(x)}{\tilde{L}\norm{v}^{2}_{2}},1\}$
    \ENDIF
    \STATE Return $\alpha,\tilde{L}$
\end{algorithmic}
\end{algorithm}

Consider the quadratic model 
\begin{equation}\label{eq:QL}
Q_{L}(x,t,\scrL)\eqdef f(x)-t\gap(x)+\frac{t^{2}\scrL}{2}\norm{v_{\FW}(x)}^{2}_{2}=f(x)-t\gap(x)+\frac{t^{2}\scrL}{2}\beta(x)^{2},
\end{equation}
where $x\in\scrX$ is the current position of the algorithm, and $t,\scrL>0$ are parameters. From the complexity analysis of \texttt{FWGSC}, we know that there exists a range of step-size parameters $t>0$ that guarantee decrease in the objective function value. Denote by $\scrS(x)\eqdef\{x'\in \scrX\vert f(x')\leq f(x)\}$, and set $\gamma_{k}\eqdef \sup\{t>0\vert x^{k}+t(s^{k}-x^{k})\in\scrS(x^{k})\}$ as well as $L_{k}\eqdef\max_{x\in\scrS(x^{k})}\lambda^{2}_{\max}(\nabla^{2}f(x))$. Then, for all $t\in[0,\gamma_{k}]$, it holds true that $f(x^{k}+t(s^{k}-x^{k}))\leq f(x^{k})$. Therefore, by the mean-value-theorem 
\begin{align*}
\norm{\nabla f(x^{k}+t(s^{k}-x^{k}))-\nabla f(x^{k})}\leq L_{k} t\norm{s^{k}-x^{k}}_{2}\qquad\forall t\in(0,\gamma_{k}).
\end{align*}
Hence, for all $t\in(0,\gamma_{k})$, 
\begin{equation}\label{eq:SD}
f(x^{k}+t(s^{k}-x^{k}))- f(x^{k})\leq -t\gap(x^{k})+\frac{L_{k}t^{2}}{2}\norm{s^{k}-x^{k}}^{2}_{2}=Q_{L}(x^{k},t,L_{k})-f(x^{k}),
\end{equation}
The idea is to dispense with the computation of the local Lipschitz estimate $L_{k}$ over the level set $\scrS(x^{k})$, and replace it by the backtracking procedure $\mathtt{step}_{L}(f,v^{k},x^{k},\scrL_{k-1})$ (Algorithm \ref{alg:stepL}) as an inner-loop within Algorithm \ref{alg:BacktrackL}  (\texttt{LBTFWGSC}). In particular, using Assumption \ref{ass:DO}, the implementation of \texttt{LBTFWGSC} does not require the evaluation of the Hessian matrix $\nabla^{2}f(x^{k})$, and simultaneously determines a step size which minimizes the quadratic model under the prevailing local Lipschitz estimate.

\subsubsection{Backtracking over the \ac{GSC} parameter $M_{f}$}
\label{sec:MBTFWGSC}
Our second backtracking variant performs a local search for the \ac{GSC} parameter $M_{f}$. Our goal is to construct a backtracking procedure for the constant $M_{f}$ such that for a given candidate \ac{GSC} parameter $\mu>0$ and search point $x^{+}_{t}=x+tv_{\FW}(x)$, we have feasibility: $x^{+}_{t}\in\dom f$, and sufficient decrease:
\begin{equation}\label{eq:QM}
f(x^{+}_{t})\leq f(x)-t\gap(x)+t^{2}\ce(x)^{2}\omega_{\nu}(t\mu\delta_{\nu}(x))\eqdef Q_{M}(x,t,\mu).
\end{equation}
Optimizing the new upper model $Q_{M}(x,t,\mu)$ with respect to $t\geq 0$ yields a step-size $\ct_{\mu,\nu}(x)$, whose definition is just like the maximizer in \eqref{eq:tau}, but using the parameters $(x,\mu,\nu)$ as input. This approach allows us to define a localized step-size, exploiting the analytic structure of the step-size policy associated with the base algorithm \texttt{FWGSC}.

\noindent
\begin{algorithm}[t]
 \caption{\texttt{FWGSC} with backtracking over the \ac{GSC} parameter $M_{f}$ (\texttt{MBTFWGSC})}
 \label{alg:BacktrackM} 
 \begin{algorithmic}
\STATE {\bfseries Input: } $x^{0}\in\dom f\cap \scrX$ initial state, $f\in\scrF_{M_{f},\nu}, \mu_{-1}>0$ initial \ac{GSC} parameter. $\gamma_{u}>1,\gamma_{d}<1$ fixed scaling parameters for the backtracking routine.
\FOR{$k=0,\ldots$}
 \IF{$\gap(x^{k})>\eps$}
  \STATE Obtain $s^{k}=s(x^{k})$ and set $v^{k}=v_{\FW}(x^{k})$
  \STATE Obtain $(\alpha_{k},\mu_{k})=\mathtt{step}_{M}(f,v^{k},x^{k},\mu_{k-1})$
  \STATE Update $x^{k+1}=x^{k}+\alpha_{k}v^{k}$
\ENDIF
\ENDFOR
\end{algorithmic}
\end{algorithm}
\begin{algorithm}[t]
 \caption{Function $\mathtt{step}_{M}(f,v,x,\mu)$ \textcolor{white}{asdfasdgjlasdlkfjlasdk}}
 \label{alg:stepM}
\begin{algorithmic}
    \STATE Choose $\tilde{M}\in[\gamma_{d}\mu,\mu]$
    \STATE $\alpha=\alpha_{\tilde{M},\nu}(x)$ defined in \eqref{eq:alpha1}
    \IF{$x+\alpha v\notin\dom f$ or $f(x+\alpha v)>Q_{M}(x,\alpha,\tilde{M})$ }
   \STATE $\tilde{M}\leftarrow\gamma_{u}\tilde{M}$
    \STATE $\alpha\leftarrow \alpha_{\tilde{M},\nu}(x)$
    \ENDIF
    \STATE Return $\alpha,\tilde{M}$
\end{algorithmic}
\end{algorithm}

The main merit of this backtracking method can be seen by revisiting the analytical step-size criterion attached with \texttt{FWGSC}, defined in eq. \eqref{eq:alpha1}. It is clear from the definition of the function $\alpha_{M,\nu}(x)$ that a larger $M$ cannot lead to a larger step size. Hence, a precise local estimate of the \ac{GSC} parameter $M$ opens up possibilities to make larger steps and thus improve the practical performance of the method. We will see in our numerical experiments in Section \ref{sec:numerics} that this claim has some substance in important machine learning problems.

\section{Complexity analysis}
\label{sec:GSCcomplexity}
%

\subsection{Complexity Analysis of \texttt{FWGSC}} 
\label{sec:variant1}

Based on the preliminary discussion of Section \ref{sec:algo1}, our strategy to determine the step-size policy is to first compute $\ct_{M_{f},\nu}(x)$ defined as the solution to program \eqref{eq:tau} and then clip the value accordingly. A technical analysis of the optimization problem \eqref{eq:tau}, relegated to Appendix \ref{app:Appendix1}, yields the following explicit expression for $\ct_{M_{f},\nu}(x)$.
\begin{proposition}
\label{th:tau}
The unique solution to program \eqref{eq:tau} is given by 
\begin{equation}
\label{eq:t_x_opt}
 \ct_{M_{f},\nu}(x)=\left\{\begin{array}{ll}
\frac{1}{M_{f}\delta_{2}(x)}\ln\left(1+\frac{\gap(x)M_{f}\delta_{2}(x)}{\ce(x)^2 }\right) & \text{ if }\nu=2, \\
\frac{1}{M_{f}\delta_{\nu}(x)}\left[1-\left(1+\frac{M_{f}\delta_{\nu}(x) \gap(x)}{\ce(x)^2}\frac{4-\nu}{\nu-2}\right)^{-\frac{\nu-2}{4-\nu}}\right] & \text{ if }\nu\in(2,3),\\
\frac{\gap(x)}{M_{f}\delta_{3}(x) \gap(x)+\ce(x)^2} & \text{ if }\nu=3.\\
\end{array}\right. 
\end{equation}
where $\delta_{\nu}(x),\nu\in[2,3],$ is defined in eq. \eqref{eq:delta_x}.
\end{proposition}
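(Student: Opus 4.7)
The function $\eta_{x,M,\nu}(t)$ has already been observed in the excerpt to be strictly concave on its domain, and at $t=0$ its right-derivative equals $\gap(x)>0$. Thus the supremum in \eqref{eq:tau} is attained at a unique interior critical point, characterized by the first-order condition $\eta_{x,M,\nu}'(t)=0$. The plan is to carry out this differentiation case by case for the three forms of $\omega_{\nu}$ given in \eqref{eq:OmegaBreg}, and in each case reduce the optimality condition to an elementary algebraic equation that can be inverted explicitly.

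For bookkeeping I would introduce the shorthand $g\eqdef\gap(x)$, $e\eqdef\ce(x)$, $\delta\eqdef\delta_{\nu}(x)$, together with the rescaled variable $s=tM_{f}\delta$. Defining $h_{\nu}(s)\eqdef\omega_{\nu}(s)s^{2}$, the composite factor $\omega_{\nu}(tM_{f}\delta)t^{2}$ equals $h_{\nu}(s)/(M_{f}\delta)^{2}$, so the first-order condition collapses to
\begin{equation*}
h_{\nu}'(s)=\frac{gM_{f}\delta}{e^{2}}.
\end{equation*}
For $\nu=2$ one reads off $h_{2}(s)=e^{s}-s-1$, so $h_{2}'(s)=e^{s}-1$, and inversion yields $s=\ln(1+gM_{f}\delta/e^{2})$. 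For $\nu=3$ one has $h_{3}(s)=-s-\ln(1-s)$, whence $h_{3}'(s)=s/(1-s)$ and inversion gives $s=gM_{f}\delta/(e^{2}+gM_{f}\delta)$. Translating back through $t=s/(M_{f}\delta)$ reproduces the first and third branches of \eqref{eq:t_x_opt}.

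The main calculational obstacle is the intermediate branch $\nu\in(2,3)$, where $\omega_{\nu}$ involves a fractional power $(1-s)^{p}$ with exponent $p\eqdef 2(3-\nu)/(2-\nu)<0$. Expanding the definition gives
\begin{equation*}
h_{\nu}(s)=\frac{(\nu-2)^{2}}{2(4-\nu)(3-\nu)}\bigl[(1-s)^{p}-1\bigr]-\frac{\nu-2}{4-\nu}\,s,
\end{equation*}
and a direct differentiation, coupled with the identities $-p\cdot\frac{(\nu-2)^{2}}{2(4-\nu)(3-\nu)}=\frac{\nu-2}{4-\nu}$ and $p-1=-\frac{4-\nu}{\nu-2}$, simplifies the derivative to $h_{\nu}'(s)=\frac{\nu-2}{4-\nu}\bigl[(1-s)^{-(4-\nu)/(\nu-2)}-1\bigr]$. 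The first-order condition then reads $(1-s)^{-(4-\nu)/(\nu-2)}=1+\frac{4-\nu}{\nu-2}\cdot\frac{gM_{f}\delta}{e^{2}}$; raising both sides to the $-(\nu-2)/(4-\nu)$-th power, solving for $s$, and writing $t=s/(M_{f}\delta)$ produces exactly the middle branch of \eqref{eq:t_x_opt}. Because the base of the power is strictly greater than $1$ and the exponent is negative, the result lies in $(0,1)$, so the candidate $t$ lies in $(0,1/(M_{f}\delta))$; this places the critical point in the interior of the domain on which $\eta_{x,M_{f},\nu}$ is strictly concave, securing both uniqueness and global optimality.
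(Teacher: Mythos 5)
Your proposal is correct and takes essentially the same route as the paper's Appendix~B: both reduce the concave maximization \eqref{eq:tau} to a first-order stationarity condition and solve it case by case over $\nu\in\{2\}\cup(2,3)\cup\{3\}$, with the paper working with the normalized model function $\psi_{\nu}(t)=t-\xi\omega_{\nu}(t\delta)t^{2}$ and you working with the equivalent rescaling $h_{\nu}(s)=\omega_{\nu}(s)s^{2}$, $s=tM_{f}\delta$. Your closed-form for $h_{\nu}'$ (using $-p\cdot\tfrac{(\nu-2)^{2}}{2(4-\nu)(3-\nu)}=\tfrac{\nu-2}{4-\nu}$ and $p-1=-\tfrac{4-\nu}{\nu-2}$) is correct, and the check that the resulting $s$ lies in $(0,1)$ properly secures interiority and hence unique global optimality via strict concavity.
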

Next we show that \texttt{FWGSC} is well-defined using the step size policy \eqref{eq:alpha1}. 
\begin{proposition}\label{prop:feasible}
Let $\{x^{k}\}_{k\geq 0}$ be generated by \texttt{FWGSC} with step size policy $\{\alpha_{M_{f},\nu}(x^{k})\}_{k\geq 0}$ defined in \eqref{eq:alpha1}. Then $x^{k}\in\scrX\cap \dom f$ for all $k\geq 0$.
\end{proposition}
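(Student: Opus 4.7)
The plan is a straightforward induction on $k$, with the base case handled by the hypothesis that $x^{0}\in\scrX\cap\dom f$. For the inductive step, assume $x^{k}\in\scrX\cap\dom f$. Feasibility in $\scrX$ is essentially free: since $\alpha_{k}=\alpha_{M_{f},\nu}(x^{k})=\min\{1,\ct_{M_{f},\nu}(x^{k})\}\in[0,1]$, the next iterate $x^{k+1}=(1-\alpha_{k})x^{k}+\alpha_{k}s^{k}$ is a convex combination of $x^{k}\in\scrX$ and $s^{k}\in\scrX$, and convexity of $\scrX$ closes this half of the argument.

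The nontrivial task is to certify $x^{k+1}\in\dom f$. For $\nu=2$ one has $\dom f=\Rn$ (see Table~\ref{tab:GSC} style examples and Definition~\ref{def:GSC}), so there is nothing to check. For $\nu\in(2,3]$, I would invoke the Dikin-ellipsoid safeguard of Lemma~\ref{lem:Dikin}: it suffices to verify that $\metric_{\nu}(x^{k},x^{k+1})<1$. Using $v^{k}=v_{\FW}(x^{k})$ together with $\|v^{k}\|_{2}=\beta(x^{k})$ and $\|v^{k}\|_{x^{k}}=\ce(x^{k})$, a direct calculation from the definition \eqref{eq:distance} of $\metric_{\nu}$ gives the identity
\begin{equation*}
\metric_{\nu}\bigl(x^{k},x^{k}+t v^{k}\bigr)=tM_{f}\delta_{\nu}(x^{k})\qquad\forall t\geq 0,
\end{equation*}
where $\delta_{\nu}(\cdot)$ is defined in \eqref{eq:delta_x}. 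Thus the desired inclusion reduces to the strict inequality $\alpha_{k}M_{f}\delta_{\nu}(x^{k})<1$, and since $\alpha_{k}\leq\ct_{M_{f},\nu}(x^{k})$, it is enough to prove
\begin{equation*}
\ct_{M_{f},\nu}(x)<\frac{1}{M_{f}\delta_{\nu}(x)}\qquad\text{whenever } \gap(x)>0 \text{ and } \nu\in(2,3].
\end{equation*}

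This bound is the main (and only) technical obstacle, but it is essentially an inspection of the explicit formula \eqref{eq:t_x_opt} provided by Proposition~\ref{th:tau}. For $\nu\in(2,3)$ the bracketed expression $1-(1+\tfrac{4-\nu}{\nu-2}\tfrac{M_{f}\delta_{\nu}(x)\gap(x)}{\ce(x)^{2}})^{-(\nu-2)/(4-\nu)}$ is strictly less than $1$ whenever $\gap(x)>0$ and $\ce(x)>0$, yielding the claim. For $\nu=3$ the inequality $\tfrac{\gap(x)}{M_{f}\delta_{3}(x)\gap(x)+\ce(x)^{2}}<\tfrac{1}{M_{f}\delta_{3}(x)}$ is equivalent to $0<\ce(x)^{2}$. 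In both sub-cases the positivity of $\ce(x)$ is guaranteed by the algorithm's entry condition $\gap(x^{k})>\eps>0$, combined with positive definiteness of $\nabla^{2}f$ on $\scrX\cap\dom f$ (Assumption~\ref{ass:1}), which forces $v_{\FW}(x^{k})\neq 0$ and therefore $\ce(x^{k})>0$. Combining these pieces, $\metric_{\nu}(x^{k},x^{k+1})<1$, Lemma~\ref{lem:Dikin} delivers $x^{k+1}\in\dom f$, and the induction closes.
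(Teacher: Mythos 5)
Your proof is correct and follows essentially the same route as the paper's: induction, convexity of $\scrX$ for primal feasibility, and the Dikin-ellipsoid safeguard (Lemma~\ref{lem:Dikin}) via the key inequality $\ct_{M_{f},\nu}(x^{k})M_{f}\delta_{\nu}(x^{k})<1$ read off from the explicit formula \eqref{eq:t_x_opt}. You are a bit more careful than the paper in spelling out why $\ce(x^{k})>0$ (via $\gap(x^{k})>\eps$ and positive definiteness of $\nabla^{2}f$), which tightens up a point the paper leaves implicit.
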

\begin{proof}
The proof proceeds by induction. By assumption, $x^{0}\in\dom f\cap\scrX$. To perform the induction step, assume that $x^{k}\in\scrX\cap\dom f$ for some $k\geq 0$. We consider two cases. 
\begin{itemize}
\item If $\nu=2$, then since $\alpha_{M_{f},2}(x^{k})\leq 1$, feasibility follows immediately from convexity of $\scrX$ (recall that $\dom f=\Rn$ in this case).
\item If $\nu\in(2,3]$, then whenever $x^{k}\in\scrX$, we deduce from \eqref{eq:t_x_opt} that $\ct_{M_{f},\nu}(x^{k})M_{f}\delta_{\nu}(x^{k})<1$. If $\ct_{M_{f},\nu}(x^k)>1$, then $\alpha_{M_{f},\nu}(x^{k})M_{f} \delta_{\nu}(x^{k}) = M_{f}\delta_{\nu}(x^{k}) < \ct_{M_{f},\nu}(x^{k}) M_{f}\delta_{\nu}(x^{k}) <1$. The claim then follows thanks to Lemma \ref{lem:Dikin}.
\end{itemize}
\end{proof}
\noindent
In order to simplify the notation, let us introduce the sequences $\alpha_{k}\equiv \alpha_{M_{f},\nu}(x^{k})$ and $\Delta_{k}\equiv \eta_{x^{k},M_{f},\nu}(\alpha_{M_{f},\nu}(x^{k}))$. Along the sequence $\{x^{k}\}_{k\geq 0}$, we have $\metric_{\nu}(x^{k},x^{k+1})=M_{f}\alpha_{k}\delta_{\nu}(x^{k})<1$, and we know that we reduce the objective function value by at least the quantity $\Delta_{k}>0$. Whence, 
 \begin{equation}\label{eq:descent1}
 f(x^{k+1})\leq f(x^{k})-\Delta_{k}<f(x^{k}),
 \end{equation}
so that $f(x^{k})\leq f(x^{0})$, or equivalently, $\{x^{k}\}_{k\geq 0}\subset\scrS(x^{0})\eqdef \{x\in\dom f\cap\scrX\vert f(x)\leq f(x^{0})\}.$ 
\begin{lemma}
The set $\scrS(x^{0})$ is compact.
\end{lemma}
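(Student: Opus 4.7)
The plan is to establish boundedness and closedness separately, since $\scrS(x^{0})$ sits in $\R^{n}$ (Heine–Borel).

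Boundedness is immediate: by definition $\scrS(x^{0})\subseteq \scrX$, and $\scrX$ is compact by Assumption~\ref{ass:1}, so $\scrS(x^{0})$ is bounded.

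For closedness I would take a sequence $\{y_{n}\}\subset \scrS(x^{0})$ with $y_{n}\to y\in\R^{n}$ and show $y\in\scrS(x^{0})$. Since $\scrX$ is closed, $y\in\scrX$. The only non-trivial point is to rule out that $y$ sits on the boundary of $\dom f$. I split into two cases according to the \ac{GSC} order:

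\begin{itemize}
\item If $\nu=2$, then $\dom f=\R^{n}$ (as already noted in the proof of Proposition~\ref{prop:feasible}), so $y\in\dom f$ automatically, and continuity of $f$ (using $f\in\bC^{3}(\dom f)$) yields $f(y)\leq f(x^{0})$.
\item If $\nu\in(2,3]$, suppose for contradiction that $y\notin\dom f$. Since $\dom f$ is open and $y_{n}\in\dom f$, the limit $y$ must lie in $\bd(\dom f)$, hence $\dist(y_{n},\bd(\dom f))\to 0$. The barrier property of $f\in\scrF_{M_{f},\nu}$ recalled in the paragraph preceding Lemma~\ref{lem:Dikin} then forces $f(y_{n})\to\infty$, contradicting $f(y_{n})\leq f(x^{0})<\infty$. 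Therefore $y\in\dom f$, and continuity of $f$ on $\dom f$ again gives $f(y)\leq f(x^{0})$.
\end{itemize}

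In both cases $y\in\scrS(x^{0})$, so $\scrS(x^{0})$ is closed and bounded, hence compact.

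The only genuine subtlety — the would-be obstacle — is the case $\nu\in(2,3]$, where $\dom f$ may be a proper open subset of $\R^{n}$ and a sublevel set is not a priori closed in $\R^{n}$. This is resolved entirely by the barrier property of generalized self-concordant functions, which blocks any sequence from escaping to $\bd(\dom f)$ while keeping bounded function values. The rest is routine topology.
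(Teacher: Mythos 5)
Your proof is correct, but it takes a longer route than the paper's. The paper observes that $f$ is a closed (i.e.\ lower semicontinuous) convex function, so its sublevel set $\{x\in\Rn: f(x)\leq f(x^{0})\}$ is closed in $\Rn$ and automatically contained in $\dom f$ (since $f=+\infty$ off $\dom f$ while $f(x^{0})<\infty$); intersecting with the closed set $\scrX$ gives closedness of $\scrS(x^{0})$ in one step, with no case split on $\nu$. Your argument instead re-derives the relevant consequence of lower semicontinuity by invoking the barrier property (to block escape to $\bd(\dom f)$ for $\nu\in(2,3]$) and the fact that $\dom f=\Rn$ for $\nu=2$, and then finishes with continuity of $f$ on the open set $\dom f$. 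Both are sound; note, though, that the barrier property you invoke is itself just lower semicontinuity of $f$ applied at boundary points (for $y\notin\dom f$, $\liminf_{n}f(y_{n})\geq f(y)=+\infty$), so the paper's appeal to ``$f$ closed convex $\Rightarrow$ sublevel sets closed'' already subsumes your case analysis. The abstract version is preferable because it avoids relying on the auxiliary fact $\dom f=\Rn$ when $\nu=2$ and avoids the $\nu$-split entirely.
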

\begin{proof}
$\scrS(x^0)\subseteq \scrX$ and therefore it is bounded. Moreover, since $x^0\in \dom f\cap\scrX$, $f$ is closed and convex and $\scrX$ is also closed. $\scrS(x^0)$ is closed as the intersection of two closed sets, and therefore compact.
\end{proof}
\noindent
Accordingly, $\scrS(x^0)\subset \dom(f)$ and the numbers $ L_{\nabla f}\eqdef\max_{x\in\scrS(x^{0})}\lambda_{\max}(\nabla^{2}f(x))$ and $\sigma_{f}\eqdef \min_{x\in\scrS(x^{0})}\lambda_{\min}(\nabla^{2}f(x))$ are well defined and finite. Furthermore, since the level set $\scrS(x^{0})$ is compact, Assumption \ref{ass:1} guarantees $\nabla^{2}f(x)\succ 0$ for all $x\in\scrS(x^{0})$, and hence $\sigma_{f}>0$. By \cite[][Thm.2.1.11]{Nes18}, for any $x \in \scrS(x^{0})$ it holds that
\begin{equation}
\label{eq:str_conv_lower_bound}
f(x)-f^{\ast}\geq \frac{\sigma_{f}}{2}\norm{x-x^{\ast}}_{2}^{2}.
\end{equation}
Proposition \ref{prop:Delta} below shows asymptotic convergence to a solution along subsequences. We omit the proof, as it follows from \cite{FWICML20}.
\begin{proposition}
\label{prop:Delta}
Suppose Assumption \ref{ass:1} holds. Then, the following assertions hold for \texttt{FWGSC}:
\begin{itemize}
\item[(a)] $\{f(x^{k})\}_{k\geq 0}$ is non-increasing;
\item[(b)] $\sum_{k\geq 0}\Delta_{k}<\infty$, and hence the sequence $\{\Delta_{k}\}_{k\geq 0}$ converges to 0;
\item[(c)] For all $K\geq 1$ we have $\min_{0\leq k<K}\Delta_{k}\leq\frac{1}{K}(f(x^{0})-f^{\ast})$.
\end{itemize}
\end{proposition}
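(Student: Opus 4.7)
The plan is to derive all three claims from the per-iteration descent inequality \eqref{eq:descent1}, which has already been established right before the Proposition. Once we have $f(x^{k+1}) \le f(x^{k}) - \Delta_{k}$ with $\Delta_{k} \ge 0$, the three claims reduce to standard telescoping arguments. Proposition \ref{prop:feasible} ensures that $x^{k} \in \scrX \cap \dom f$ for all $k$, so that $f(x^{k})$ is finite and the bounds above are meaningful, and the analysis of \eqref{eq:tau} underlying Proposition \ref{th:tau} gives $\Delta_{k} = \eta_{x^{k},M_{f},\nu}(\alpha_{k}) \ge 0$.

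For part (a), I would simply note that \eqref{eq:descent1} together with $\Delta_{k} \ge 0$ yields $f(x^{k+1}) \le f(x^{k})$ for every $k \ge 0$, so $\{f(x^{k})\}_{k \ge 0}$ is monotonically non-increasing. For part (b), I would telescope \eqref{eq:descent1} over $k = 0, \dots, K-1$ to obtain
\[
\sum_{k=0}^{K-1} \Delta_{k} \;\le\; f(x^{0}) - f(x^{K}) \;\le\; f(x^{0}) - f^{\ast},
\]
where the second inequality uses optimality of $f^{\ast}$. Since the right-hand side is independent of $K$ and all terms $\Delta_{k}$ are nonnegative, letting $K \to \infty$ gives $\sum_{k \ge 0} \Delta_{k} \le f(x^{0}) - f^{\ast} < \infty$, and convergence of the series forces $\Delta_{k} \to 0$. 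For part (c), observe that
\[
\min_{0 \le k < K} \Delta_{k} \;\le\; \frac{1}{K}\sum_{k=0}^{K-1} \Delta_{k} \;\le\; \frac{1}{K}\bigl(f(x^{0}) - f^{\ast}\bigr),
\]
which is exactly the claimed bound.

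The only subtlety worth flagging is ensuring that every telescoping step is legitimate, i.e.\ that each $f(x^{k})$ is finite and the descent inequality truly holds at every iterate. This is where Proposition \ref{prop:feasible} (feasibility of iterates) and the strict concavity of $\eta_{x,M_{f},\nu}$ on its effective domain (guaranteeing $\Delta_{k} \ge 0$ with the chosen $\alpha_{k}$) do the real work. All three conclusions are direct corollaries once these two facts are in place, which is precisely why the authors omit the proof and refer to the identical argument in \cite{FWICML20}; no genuine obstacle arises.
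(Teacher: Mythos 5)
The paper omits the proof and defers to \cite{FWICML20}, but the intended argument is precisely the standard telescoping one you give: $\Delta_{k}\ge 0$ and the descent inequality \eqref{eq:descent1} yield monotonicity, summing over $k$ bounds the series by $f(x^{0})-f^{\ast}$, and averaging gives the $1/K$ bound on the minimum. Your proof is correct and matches the paper's implicit approach; the one point you flag (well-definedness via Proposition \ref{prop:feasible} and nonnegativity of $\eta_{x,M_{f},\nu}$ at the chosen step) is indeed the only thing that needs to be in place, and you correctly identify where it comes from.
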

In order to assess the iteration complexity of \texttt{FWGSC}, we need a lower bound on the sequence $\{\Delta_{k}\}_{k\geq 0}$. We start with a bound at iterations satisfying $\ct_{M_{f},\nu}(x^{k})>1$.
\begin{lemma}\label{lem:Deltage1}
If $\ct_{M_{f},\nu}(x^{k})>1$, we have $\Delta_{k}\geq\frac{1}{2}\gap(x^{k}).$
\end{lemma}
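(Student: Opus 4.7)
The plan is to turn the hypothesis $\ct_{M_f,\nu}(x^k) > 1$ into a first-order inequality and read off the desired bound. When the unconstrained maximizer $\ct$ exceeds $1$, the truncation in \eqref{eq:alpha1} forces $\alpha_k = 1$, so $\Delta_k = \eta_{x^k,M_f,\nu}(1) = \gap(x^k) - \omega_\nu(s)\ce(x^k)^2$, where I abbreviate $s := M_f\delta_\nu(x^k)$. The claim $\Delta_k \geq \gap(x^k)/2$ therefore reduces to showing $\omega_\nu(s)\ce(x^k)^2 \leq \tfrac{1}{2}\gap(x^k)$.

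To obtain this inequality I exploit concavity. Since $\eta_{x^k,M_f,\nu}$ is strictly concave on its domain with maximizer $\ct > 1$, the derivative at $t = 1$ must satisfy $\eta'(1) \geq 0$. Differentiating \eqref{eq:eta_def} and evaluating at $t=1$ gives
\begin{equation*}
\eta'(1) = \gap(x^k) - \bigl[2\omega_\nu(s) + s\omega'_\nu(s)\bigr]\ce(x^k)^2,
\end{equation*}
so the nonnegativity of $\eta'(1)$ rearranges to $\bigl[2\omega_\nu(s) + s\omega'_\nu(s)\bigr]\ce(x^k)^2 \leq \gap(x^k)$. If I can further show that $s\omega'_\nu(s) \geq 0$, I may drop this nonnegative term and divide by $2$ to recover the target inequality.

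The only non-routine ingredient is therefore $\omega'_\nu(s) \geq 0$ for $s \in [0,\infty) \cap \dom(\omega_\nu)$. The excerpt already states that $\omega_\nu$ is strictly convex, so $\omega'_\nu$ is strictly increasing, and it suffices to check $\omega'_\nu(0) \geq 0$. A Taylor expansion of \eqref{eq:OmegaBreg} at $0$ produces $\omega_\nu(s) = \tfrac{1}{2} + c_\nu s + O(s^2)$ with $c_\nu > 0$: explicitly $c_2 = 1/6$ and $c_3 = 1/3$, and the intermediate regime $\nu \in (2,3)$ yields an analogous positive coefficient. The main obstacle is precisely this last computation: the closed form of $\omega_\nu$ for $\nu \in (2,3)$ in \eqref{eq:OmegaBreg} is unwieldy, so either one performs the expansion carefully, or one bypasses the formula by using an integral representation in which the nonnegativity of $\omega'_\nu$ is manifest. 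With $\omega'_\nu \geq 0$ in hand, the bound $\omega_\nu(s)\ce(x^k)^2 \leq \tfrac{1}{2}\gap(x^k)$ follows immediately, and substitution into $\eta(1)$ completes the proof.
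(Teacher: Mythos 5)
Your argument coincides with the paper's: both exploit the optimality condition at $t=1$ (you via $\eta'(1)\geq 0$ from strict concavity and $\ct_{M_f,\nu}(x^k)>1$; the paper via $\zeta'_\nu(1)\leq\zeta'_\nu(\ct_{M_f,\nu}(x))=1/\xi(x)$ from convexity of $\zeta_\nu(t)\eqdef\omega_\nu(tM_f\delta_\nu(x))t^2$, which is literally the same inequality after unwinding the notation), and both then discard the nonnegative term $M_f\delta_\nu(x)\,\omega'_\nu(M_f\delta_\nu(x))\geq 0$. The detail you flag as an obstacle — establishing $\omega'_\nu\geq 0$ for $\nu\in(2,3)$ — is also stated without proof in the paper; to close it explicitly, writing $p=\tfrac{2(3-\nu)}{2-\nu}$ and expanding \eqref{eq:OmegaBreg} gives $\omega_\nu(t)=\tfrac12+\tfrac{t}{3(\nu-2)}+\scrO(t^2)$, so $\omega'_\nu(0)=\tfrac{1}{3(\nu-2)}>0$ for $\nu\in(2,3]$, which together with strict convexity of $\omega_\nu$ yields $\omega'_\nu(t)>0$ for all $t>0$.
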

\begin{proof}
See Appendix \ref{sec:lemma_largestep}.
\end{proof}
\noindent
Next, we turn to iterates for which $\ct_{M_{f},\nu}(x^{k})\leq 1$. In this case, the per-iteration progress reads as $\Delta_{k}=\eta_{x^{k},M_{f},\nu}(\ct_{M_{f},\nu}(x^{k}))$, and enjoys the following lower bound:
\begin{lemma}\label{lem:tildedelta}
If $\ct_{M_{f},\nu}(x^{k})\leq 1$, we have
\begin{equation}
\Delta_{k}\geq\tilde{\Delta}_{k}\eqdef \left\{\begin{array}{ll} 
\frac{2 \ln(2)-1}{\diam(\scrX)} \min\left\{\frac{\gap(x^{k})}{M_{f}}, \frac{\gap(x^{k})^2}{\diam(\scrX)L_{\nabla f}}\right\} & \text{if }\nu=2,\\
\frac{\tilde{\gamma}_{\nu}}{\diam(\scrX)} \min\left\{\frac{\gap(x^{k}) }{\left(\frac{\nu}{2}-1\right)M_{f}L_{\nabla f}^{(\nu-2)/2}},\frac{-1}{\cb} \frac{\gap(x^{k})^2}{L_{\nabla f}\diam(\scrX)}\right\} & \text{ if }\nu\in(2,3),\\
\frac{2(1- \ln(2))}{\sqrt{L_{\nabla f}}\diam(\scrX)} \min\left\{\frac{\gap(x^{k})}{M_{f}}, \frac{\gap(x^{k})^2}{\sqrt{L_{\nabla f}}\diam(\scrX)}\right\}
& \text{ if }\nu=3.
\end{array}\right.
\end{equation}
where $\tilde{\gamma}_{\nu}\eqdef 1+\frac{4-\nu}{2(3-\nu)}\left(1-2^{2(3-\nu)/(4-\nu)}\right)$ and $\cb\eqdef \frac{2-\nu}{4-\nu}$.
\end{lemma}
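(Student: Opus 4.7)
The plan is to substitute the explicit optimal step $\ct_{M_f,\nu}(x^k)$ from \cref{th:tau} into $\eta_{x^k,M_f,\nu}$, reduce to a one-dimensional scalar inequality, and then convert back using global norm bounds. Introduce the dimensionless quantity
\[
z_k \eqdef \frac{\gap(x^k)\,M_f\,\delta_\nu(x^k)}{\ce(x^k)^2}.
\]
Using the first-order optimality condition $\eta'(\ct)=0$, the rescaled step $\ct M_f\delta_\nu(x^k)$ is an explicit function of $z_k$ alone, and substitution yields $\Delta_k = \frac{\ce(x^k)^2}{(M_f\delta_\nu(x^k))^2}\,g_\nu(z_k)$, with $g_2(z)=(1+z)\ln(1+z)-z$, $g_3(z)=z-\ln(1+z)$, and an analogous closed form in the intermediate range, coming from $\omega_\nu$ with the parameter $\cb=(2-\nu)/(4-\nu)$.

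The analytic core is the one-variable estimate $g_\nu(z)\geq c_\nu\min\{z,z^2\}$ on $[0,\infty)$, with $c_2=2\ln 2-1$, $c_3=1-\ln 2$, and $c_\nu=\tilde\gamma_\nu$ for $\nu\in(2,3)$. This follows by showing that $z\mapsto g_\nu(z)/z^2$ is nonincreasing on $(0,1]$ and $z\mapsto g_\nu(z)/z$ is nondecreasing on $[1,\infty)$, so each ratio attains its extremum at $z=1$ and $c_\nu=g_\nu(1)$. Both monotonicities are elementary: for $\nu=2$, the decrease of $g_2(z)/z^2$ follows from $p(z)=2z-(z+2)\ln(1+z)\leq 0$ (immediate from $p(0)=p'(0)=0$ and $p''\leq 0$), and the increase of $g_2(z)/z$ from $g_2'(z)z-g_2(z)=z-\ln(1+z)\geq 0$; for $\nu=3$ the numerator of the derivative of $g_3(z)/z^2$ is $z^2/(1+z)-2z+2\ln(1+z)$, which vanishes at $0$ and has derivative $-z^2/(1+z)^2\leq 0$; the intermediate case follows the same template after reparameterizing through $y=z(4-\nu)/(\nu-2)$.

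Given the scalar bound, the proof closes by a dichotomy on $z_k$. When $z_k\geq 1$, the minimum reduces to $z_k$ and one obtains $\Delta_k\geq c_\nu\,\gap(x^k)/(M_f\delta_\nu(x^k))$; inserting the global estimate $\delta_\nu(x)\leq \tfrac{\nu-2}{2}\,L_{\nabla f}^{(\nu-2)/2}\diam(\scrX)$ (derived from $\ce(x)\leq\sqrt{L_{\nabla f}}\,\beta(x)\leq\sqrt{L_{\nabla f}}\diam(\scrX)$, with $\delta_2(x)=\beta(x)\leq\diam(\scrX)$ at the endpoint $\nu=2$) yields the first argument of the stated minimum. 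When $z_k<1$, the prefactor $\ce^2/(M_f\delta_\nu)^2$ cancels neatly to give $\Delta_k\geq c_\nu\,\gap(x^k)^2/\ce(x^k)^2$; applying $\ce(x)^2\leq L_{\nabla f}\diam(\scrX)^2$ produces the second argument.

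The main obstacle is the intermediate case $\nu\in(2,3)$. Here $\omega_\nu$ involves the noninteger power $(1-t)^{2(3-\nu)/(2-\nu)}$, so the closed form of $g_\nu$ and the monotonicity of its ratios do not reduce to a classical log-identity. I would overcome this by expressing $g_\nu$ in the variable $y=z(4-\nu)/(\nu-2)$, computing the derivatives of $g_\nu(z)/z$ and $g_\nu(z)/z^2$ directly, and verifying that the sign pattern matches the $\nu\in\{2,3\}$ template; $\tilde\gamma_\nu$ then emerges as $g_\nu(1)$ from this computation, and a reassuring sanity check is the continuous limit $\tilde\gamma_\nu\to 1-\ln 2 = c_3$ as $\nu\to 3$.
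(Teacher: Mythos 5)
Your reduction to the dimensionless variable $z_k=\gap(x^k)M_f\delta_\nu(x^k)/\ce(x^k)^2$ and the identity $\Delta_k=\frac{\ce(x^k)^2}{(M_f\delta_\nu(x^k))^2}g_\nu(z_k)$ are exactly the paper's structure (the paper works with $u=1-z/\cb$, equivalently $u=\xi/\delta$ for $\nu\in\{2,3\}$, so this is just a change of variable). The calculations you sketch for $\nu=2$ and $\nu=3$ are correct and match the paper's appendix. The gap is in the intermediate case $\nu\in(2,3)$, and it is not cosmetic: there the crossover in $g_\nu(z)\geq c\min\{\cdot,\cdot\}$ is \emph{not} at $z=1$. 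Writing $g_\nu(z)=z\,\gamma(1-z/\cb)$ with $\gamma(u)=1+\frac{\ca}{u-1}\bigl(1-u^{\cb+1}\bigr)$, the paper shows $\gamma(u)/(u-1)$ nonincreasing on $(1,2]$ and $\gamma(u)$ nondecreasing on $[2,\infty)$, and the pivot $u=2$ corresponds to $z=-\cb=\frac{\nu-2}{4-\nu}\in(0,1)$. Consequently $\tilde\gamma_\nu=\gamma(2)=g_\nu(-\cb)/(-\cb)$, \emph{not} $g_\nu(1)$. Your identification $c_\nu=g_\nu(1)=\tilde\gamma_\nu$ is therefore false; in fact $g_\nu(1)=\gamma\bigl(1-\tfrac{1}{\cb}\bigr)$ is a different number, and as $\nu\downarrow2$ one can check $g_\nu(1)\to 0$ while the lemma's constant $-\tilde\gamma_\nu/\cb\to 2\ln 2-1$.

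This matters because the stated lemma carries the amplification factor $-1/\cb=\frac{4-\nu}{\nu-2}>1$ in the quadratic branch, and that factor is produced precisely by evaluating $g_\nu(z)/z^2$ at $z=-\cb$ (giving $-\tilde\gamma_\nu/\cb$), not at $z=1$. With your split the correct consequence of your monotonicity claims would be $g_\nu(z)\geq g_\nu(1)\min\{z,z^2\}$, yielding $\Delta_k\geq g_\nu(1)\min\{\gap/\delta,\gap^2/\ce^2\}$ without the $-1/\cb$; and since $g_\nu(1)<-\tilde\gamma_\nu/\cb$ when $\nu$ is close to $2$, this is strictly weaker than the lemma and does not imply it. To close the gap you need to split at $z=-\cb$ rather than $z=1$ (i.e.\ $u=2$ in the $u$-coordinate), which is what the paper does, showing the two monotonicities of $\gamma(u)/(u-1)$ and $\gamma(u)$ on $(1,2]$ and $[2,\infty)$ respectively, and reading off $\tilde\gamma_\nu=\gamma(2)$; your $\nu\to3$ sanity check still holds because $-\cb\to1$ in that limit, which is why it did not reveal the error.
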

\begin{proof}
See Appendix \ref{sec:delta_lower}.
\end{proof}
\begin{remark}
It can be checked that $\lim_{\nu\to 3}\tilde{\gamma}_{\nu}=1-\ln(2)$, so that the lower bound $\tilde{\Delta}_{k}$ is continuous in the parameter range $\nu\in(2,3]$.
\end{remark}
\noindent 
Combining Lemma \ref{lem:Deltage1} together with Lemma \ref{lem:tildedelta} and estimates summarized in Appendix \ref{sec:delta_lower}, we get the next fundamental relation.
\begin{proposition}\label{prop:PBlower}
Suppose Assumption \ref{ass:1} holds. Let $\{x^{k}\}_{k\geq 0}$ be generated by \texttt{FWGSC}. Then, for all $k\geq 0$, we have 
\[
\Delta_{k}\geq\min\{\cc_{1}(M_{f},\nu)\gap(x^{k}), \cc_{2}(M_{f},\nu) \gap(x^{k})^{2}\},
\]
where, for $(M,\nu)\in(0,\infty)\times[2,3]$, we define
\begin{equation}\label{eq:c1}
\cc_{1}(M,\nu)\eqdef\left\{\begin{array}{ll}\min\left\{\frac{1}{2},\frac{2\ln(2)-1}{M\diam(\scrX)}\right\} & \text{if }\nu=2,\\
\min\left\{\frac{1}{2},\frac{\tilde{\gamma}_{\nu}}{\diam(\scrX)(\nu/2-1)M L_{\nabla f}^{(\nu-2)/2}}\right\} & \text{if }\nu\in(2,3),\\
\min\left\{\frac{1}{2},\frac{2(1- \ln 2)}{M\sqrt{L_{\nabla f}}\diam(\scrX)}\right\} & \text{if }\nu=3.
\end{array}\right.
\end{equation}
and 
\begin{equation}\label{eq:c2}
\cc_{2}(M,\nu)\eqdef\left\{\begin{array}{ll} \frac{2\ln(2)-1}{L_{\nabla f}\diam(\scrX)^{2}} & \text{if }\nu=2,\\
\frac{-1}{\cb}\frac{\tilde{\gamma}_{\nu}}{\diam(\scrX)^{2}L_{\nabla f}} & \text{if }\nu\in(2,3),\\
\frac{2(1- \ln 2)}{L_{\nabla f}\diam(\scrX)^{2}} & \text{if }\nu=3.
\end{array}\right.
\end{equation}
\end{proposition}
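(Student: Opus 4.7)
The plan is a direct case split on whether the unconstrained maximizer $\ct_{M_{f},\nu}(x^{k})$ of the concave program \eqref{eq:tau} exceeds $1$, which is precisely the condition that determines whether the clipping in the step-size rule \eqref{eq:alpha1} is active. In the first case $\ct_{M_{f},\nu}(x^{k})>1$, Lemma \ref{lem:Deltage1} directly gives $\Delta_{k}\geq \tfrac{1}{2}\gap(x^{k})$, which is a pure linear-in-gap bound; it supplies the constant $1/2$ that appears inside the outer $\min$ in the definition \eqref{eq:c1} of $\cc_{1}(M_{f},\nu)$, and imposes no constraint on the quadratic part.

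In the complementary case $\ct_{M_{f},\nu}(x^{k})\le 1$, we invoke Lemma \ref{lem:tildedelta}, which already furnishes a lower bound of the form $\tilde{\Delta}_{k}=\min\{A_{\nu}\gap(x^{k}),\,B_{\nu}\gap(x^{k})^{2}\}$ in each of the three regimes $\nu=2$, $\nu\in(2,3)$, $\nu=3$. I would then read off $A_{\nu}$ and $B_{\nu}$ from the piecewise formula in Lemma \ref{lem:tildedelta} and verify they coincide with the second entries of \eqref{eq:c1} and \eqref{eq:c2}, respectively. For instance, in the case $\nu=2$ one reads $A_{2}=\tfrac{2\ln(2)-1}{M_{f}\diam(\scrX)}$ and $B_{2}=\tfrac{2\ln(2)-1}{L_{\nabla f}\diam(\scrX)^{2}}$, which match exactly the corresponding entries in \eqref{eq:c1}--\eqref{eq:c2}; the cases $\nu\in(2,3)$ and $\nu=3$ are analogous, and the continuity remark after Lemma \ref{lem:tildedelta} confirms that no boundary issues arise at $\nu=3$.

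Combining the two cases, for every $k$ we obtain
\[
\Delta_{k}\ge \min\bigl\{\,c\,\gap(x^{k}),\; B_{\nu}\gap(x^{k})^{2}\,\bigr\}
\quad\text{with}\quad
c\in\bigl\{\tfrac{1}{2},\,A_{\nu}\bigr\},
\]
and taking the worst of the two linear coefficients yields the claimed $\cc_{1}(M_{f},\nu)=\min\{1/2,A_{\nu}\}$ together with $\cc_{2}(M_{f},\nu)=B_{\nu}$. The statement is therefore a bookkeeping consolidation of Lemmas \ref{lem:Deltage1} and \ref{lem:tildedelta}; the only point of real care is to track the constants $\tilde{\gamma}_{\nu}$ and $\cb$ across the three parameter regimes and to verify that the constants in \eqref{eq:c1}--\eqref{eq:c2} are precisely the ones produced by the preceding lemmas. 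I expect no obstacle beyond this matching of constants.
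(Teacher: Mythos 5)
Your proposal is correct and matches the paper's own argument: both proceed by a case split on whether $\ct_{M_{f},\nu}(x^{k})>1$, apply Lemma \ref{lem:Deltage1} in the first case and Lemma \ref{lem:tildedelta} in the second, and then combine by taking the smaller of the two linear coefficients while inheriting the quadratic one. The only thing worth noting is that the paper spells out only the $\nu=2$ case and says the others follow identically, exactly as you anticipate.
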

\begin{proof}
We only illustrate the lower bound for the case $\nu=2$. All other claims can be verified in exactly the same way. From Lemma~\ref{lem:Deltage1}, we know that $\Delta_{k}\geq\frac{1}{2}\gap(x^{k})$ whenever $\ct_{M_{f},2}(x^{k})>1$. Moreover, from Lemma~\ref{lem:tildedelta} we have that $\ct_{M_{f},2}(x^{k})\leq 1$, then $\Delta_{k}\geq \frac{2 \ln 2-1}{\diam(\scrX)} \min\left\{\frac{\gap(x^{k})}{M_{f}}, \frac{\gap(x^{k})^2}{\diam(\scrX)L_{\nabla f}}\right\}$. Consequently, 
\begin{align*}
\Delta_{k}\geq \min\left\{\min\left\{\frac{1}{2},\frac{2\ln(2)-1}{M_{f}\diam(\scrX)}\right\}\gap(x^{k}),\frac{2\ln(2)-1}{\diam(\scrX)^{2}L_{\nabla f}}\gap(x^{k})^{2}\right\}.
\end{align*}
\end{proof}
\noindent
With the help of the lower bound in Proposition \ref{prop:PBlower}, we are now able to establish the $\scrO(1/\eps)$ convergence rate in terms of the \emph{approximation error} $h_{k}\eqdef f(x^{k})-f^{\ast}$. 
\begin{theorem}\label{th:FW}
Suppose that Assumption \ref{ass:1} holds. Let $\{x^{k}\}_{k\geq 0}$ be generated by \texttt{FWGSC}. For $x^{0}\in\scrX\cap\dom f$ and $\eps>0$, define $N_{\eps}(x^{0})\eqdef\inf\{k\geq 0\vert h_{k}\leq\eps\}.$ Then, for all $\eps>0$, 
\begin{equation}\label{eq:N}
N_{\eps}(x^{0})\leq \frac{\ln\left(\frac{\cc_{1}(M_{f},\nu)}{h_{0}\cc_{2}(M_{f},\nu)}\right)}{\ln(1-\cc_{1}(M_{f},\nu))}+\frac{1}{\cc_{2}(M_{f},\nu)\eps}.
\end{equation}
\end{theorem}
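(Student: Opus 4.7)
The plan is to combine the per-iteration decrease from Proposition~\ref{prop:PBlower} with the standard two-phase argument familiar from Frank--Wolfe analyses of functions with a quadratic growth-type lower bound.

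First I would convert the gap-based decrease into a decrease on $h_k$. By convexity of $f$,
\[
\gap(x^{k})=\max_{s\in\scrX}\inner{\nabla f(x^{k}),x^{k}-s}\geq \inner{\nabla f(x^{k}),x^{k}-x^{\ast}}\geq f(x^{k})-f^{\ast}=h_{k},
\]
so Proposition~\ref{prop:PBlower} combined with \eqref{eq:descent1} yields
\[
h_{k+1}\leq h_{k}-\min\bigl\{\cc_{1}(M_{f},\nu)\,h_{k},\ \cc_{2}(M_{f},\nu)\,h_{k}^{2}\bigr\}.
\]
Writing $\cc_{1}=\cc_{1}(M_{f},\nu)$ and $\cc_{2}=\cc_{2}(M_{f},\nu)$ for brevity, this is a linear contraction when $h_{k}$ is large (specifically $h_{k}>\cc_{1}/\cc_{2}$, where $\cc_{1}h_{k}\leq \cc_{2}h_{k}^{2}$), and a quadratic-in-$h_{k}$ decrease when $h_{k}\leq \cc_{1}/\cc_{2}$.

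Next I would treat the two phases separately. \emph{Phase~1}: while $h_{k}>\cc_{1}/\cc_{2}$, the recursion simplifies to $h_{k+1}\leq(1-\cc_{1})h_{k}$, giving geometric decay $h_{k}\leq(1-\cc_{1})^{k}h_{0}$. Solving $(1-\cc_{1})^{K_{1}}h_{0}\leq \cc_{1}/\cc_{2}$ for the first iterate $K_{1}$ at which we enter the quadratic regime, and using $\ln(1-\cc_{1})<0$ (legitimate since $\cc_{1}\leq 1/2$ by \eqref{eq:c1}), gives
\[
K_{1}\leq \frac{\ln\bigl(\cc_{1}/(h_{0}\cc_{2})\bigr)}{\ln(1-\cc_{1})}.
\]
\emph{Phase~2}: once $h_{k}\leq\cc_{1}/\cc_{2}$, induction shows this bound is preserved (since the map $h\mapsto h-\cc_{2}h^{2}$ is non-decreasing on the relevant range), and the recursion becomes $h_{k+1}\leq h_{k}(1-\cc_{2}h_{k})$ with $\cc_{2}h_{k}\leq \cc_{1}\leq 1/2<1$. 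Taking reciprocals and invoking the elementary inequality $1/(1-x)\geq 1+x$ for $x\in[0,1)$,
\[
\frac{1}{h_{k+1}}\geq \frac{1}{h_{k}(1-\cc_{2}h_{k})}\geq \frac{1+\cc_{2}h_{k}}{h_{k}}=\frac{1}{h_{k}}+\cc_{2},
\]
so telescoping from $K_{1}$ yields $1/h_{k}\geq 1/h_{K_{1}}+\cc_{2}(k-K_{1})\geq \cc_{2}(k-K_{1})$.

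Finally, to make $h_{k}\leq\eps$ it suffices that $\cc_{2}(k-K_{1})\geq 1/\eps$, i.e., $k\geq K_{1}+1/(\cc_{2}\eps)$. Combining with the bound on $K_{1}$ produces exactly \eqref{eq:N}. The only subtlety is the harmless edge case $h_{0}\leq \cc_{1}/\cc_{2}$ in which Phase~1 is empty (the corresponding $K_{1}$ bound becomes non-positive, which is consistent since the logarithm of a quantity $\leq 1$ is $\leq 0$, and we just enter Phase~2 immediately); no separate argument is needed. I do not expect any serious obstacle: the analysis is an instance of the well-known "linear-then-sublinear" schema once the key gap inequality $\gap(x^{k})\geq h_{k}$ and the dichotomy from Proposition~\ref{prop:PBlower} are in hand.
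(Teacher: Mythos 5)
Your proposal is correct and follows the same two-phase scheme the paper uses in its own proof of Theorem~\ref{th:FW}: pass from $\gap(x^k)\geq h_k$ to the recursion $h_{k+1}\leq h_k-\min\{\cc_1 h_k,\cc_2 h_k^2\}$, bound the linear-contraction phase ($h_k>\cc_1/\cc_2$) by $(1-\cc_1)^k h_0$, then telescope $1/h_k$ in the quadratic phase. The only cosmetic difference is in the derivation of $\tfrac{1}{h_{k+1}}\geq\tfrac{1}{h_k}+\cc_2$: the paper divides $h_k\geq h_{k+1}+\cc_2 h_k^2$ by $h_k h_{k+1}$ and uses $h_k/h_{k+1}\geq 1$, while you invert $h_{k+1}\leq h_k(1-\cc_2 h_k)$ and invoke $1/(1-x)\geq 1+x$; both are one-line equivalent routes to the same bound.
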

\begin{proof}
To simplify the notation, let us set $\cc_{1}\equiv\cc_{1}(M_{f},\nu)$ and $\cc_{2}\equiv\cc_{2}(M_{f},\nu)$. By convexity, we have $\gap(x^{k})\geq h_{k}$. Therefore, Proposition \ref{prop:PBlower} shows that $\Delta_{k}\geq\min\{\cc_{1} h_{k},\cc_{2} h_{k}^{2}\}$. This implies
\[
h_{k+1}\leq h_{k}-\min\{\cc_{1} h_{k},\cc_{2} h_{k}^{2}\}\qquad\forall k\geq 0.
\]
 From this inequality we see that $h_k$ is decreasing and there are two potential phases of convergence:\\
 \vspace{0.3cm}
 \textbf{Phase I.} $\cc_1 h_{k} < \cc_2 h_{k}^2$, which is equivalent to $h_{k}>\frac{\cc_{1}}{\cc_{2}}$.\\
\vspace{0.2cm} 
 \textbf{Phase II.} $\cc_1 h_{k} \geq \cc_2 h_{k}^2$, which is equivalent to $h_{k}\leq \frac{\cc_{1}}{\cc_{2}}$.\\
\noindent
For fixed initial condition $x^{0}\in\dom f\cap\scrX$, we can thus subdivide the time domain into the set $\scrK_{1}(x^{0})\eqdef \{k\geq 0\vert h_{k}>\frac{\cc_{1}}{\cc_{2}}\}$ (Phase I) and $\scrK_{2}(x^{0})\eqdef\{k\geq 0\vert h_{k}\leq \frac{\cc_{1}}{\cc_{2}}\}$ (Phase II).
Since $\{h_{k}\}_{k\in\scrK_{1}(x^{0})}$ is decreasing and bounded from below by the positive constant $\cc_{1}/\cc_{2}$, the set $\scrK_{1}(x^{0})$ is bounded. Let us set 
\begin{equation}
T_{1}(x^{0})\eqdef\inf\{k\geq 0\vert h_{k}\leq\frac{\cc_{1}}{\cc_{2}}\},
\end{equation}
the first time at which the process $\{h_{k}\}_{k}$ enters Phase II. To get a worst-case estimate on this quantity, we assume without loss of generality that $0\in \scrK_{1}(x^{0})$, so that $\scrK_{1}(x^{0})=\{0,1,\ldots,T_{1}(x^{0})-1\}$. Then, for all $k=1,\ldots,T_{1}(x^{0})-1$ we have $\frac{\cc_{1}}{\cc_{2}}<h_{k}\leq h_{k-1}-\min\{\cc_{1} h_{k-1},\cc_{2} h_{k-1}^{2}\}=h_{k-1}-\cc_{1} h_{k-1}$. Note that $\cc_{1}\leq1/2$, so we make progressions like a geometric series, i.e. we have linear convergence in this phase. Hence, 
$h_{k}\leq (1-\cc_{1})^{k}h_{0}$ for all $k=0,\ldots,T_{1}(x^{0})-1$. By definition $h_{T_{1}(x^{0})-1}>\frac{\cc_{1}}{\cc_{2}}$, so we get 
$\frac{\cc_{1}}{\cc_{2}}\leq h_{0} (1-\cc_{1})^{T_{1}(x^{0})-1}$ iff $(T_{1}(x^{0})-1)\ln(1-\cc_{1})\geq \ln\left(\frac{\cc_{1}}{h_{0}\cc_{2}}\right)$. Hence, 
\begin{equation}\label{eq:T1}
T_{1}(x^{0})\leq \ceil[\bigg]{\frac{\ln\left(\frac{\cc_{1}}{h_{0}\cc_{2}}\right)}{\ln(1-\cc_{1})}}+1.
\end{equation}
After these number of iterations, the process will enter Phase II, at which $h_{k}\leq\frac{\cc_{1}}{\cc_{2}}$ holds. Therefore, $h_{k}\geq h_{k+1}+\cc_{2} h_{k}^{2}$, or equivalently, 
\begin{equation}\label{eq:rel1}
\frac{1}{h_{k+1}}\geq \frac{1}{h_{k}}+\cc_{2}\frac{h_{k}}{h_{k+1}}\geq \frac{1}{h_{k}}+\cc_{2}.
\end{equation}
Pick $N>T_{1}(x^{0})$ an arbitrary integer. Summing \eqref{eq:rel1} from $k=T_{1}(x^{0})$ up to $k=N-1$, we arrive at 
\begin{align*}
\frac{1}{h_{N}}\geq \frac{1}{h_{T_{1}(x^{0})}}+\cc_{2}(N-T_{1}(x^{0})+1).
\end{align*}
By definition $h_{T_{1}(x^{0})}\leq \frac{\cc_{1}}{\cc_{2}}$, so that for all $N>T_{1}(x^{0})$, we see
\begin{align*}
\frac{1}{h_{N}}\geq \frac{\cc_{2}}{\cc_{1}}+\cc_{2}(N-T_{1}(x^{0})+1).
\end{align*}
Consequently, 
\begin{equation}\label{eq:rel2}
h_{N}\leq \frac{1}{\frac{\cc_{2}}{\cc_{1}}+\cc_{2}(N-T_{1}(x^{0})+1)}\leq\frac{1}{\cc_{2}(N-T_{1}(x^{0})+1)}.
\end{equation}
By definition of the stopping time $N_{\eps}(x^{0})$, it is true that $h_{N_{\eps}(x^{0})-1}>\eps$. Consequently, evaluating \eqref{eq:rel2} at $N=N_{\eps}(x^{0})-1$, we obtain
\begin{align*}
\eps\leq \frac{1}{\cc_{2}(N_{\eps}(x^{0})-T_{1}(x^{0}))} \iff N_{\eps}(x^{0})\leq T_{1}(x^{0})+\frac{1}{\cc_{2}\eps}.
\end{align*}
Combining this upper bound with \eqref{eq:T1} shows the claim. 
\end{proof}

\begin{remark}\label{Rm:FWGSC_compl_simpl}
Combining the result of Theorem \ref{th:FW} and the definitions of the constants $\cc_{1}(M,\nu)$ in \eqref{eq:c1} and $\cc_{2}(M,\nu)$ in \eqref{eq:c2}, we can see that, neglecting the logarithmic terms and using that $-\frac{1}{\ln(1-x)}\leq \frac{1}{x}$ for $x\in [0,1]$, the iteration complexity of \texttt{FWGSC} can be bounded as
\begin{equation}\label{eq:FWGSC_compl_simpl}
 \max \left\{c_1, c_2 M_f L_{\nabla f}^{(\nu-2)/2} \diam(\scrX) \right\} + \frac{c_3L_{\nabla f}\diam(\scrX)^{2}}{\eps},
\end{equation}
where $c_1,c_2,c_3$ are numerical constants. The first term corresponds to Phase I where one observes the linear convergence, the second term corresponds to the Phase II with sublinear convergence. Interestingly, the second term has the same form as the standard complexity bound for FW methods. The only difference is that the global Lipschitz constant of the gradient is changed to the Lipschitz constant over the level set defined by the starting point. 
\end{remark}

\subsection{Complexity Analysis of Backtracking versions}
\label{sec:Backtracking}
The complexity analysis of both backtracking-based algorithms (\texttt{LBTFWGSC} and \texttt{MBTFWGSC}) use similar ideas, which all essentially rest on the specific form of the employed upper model $Q_{L}$ and $Q_{M}$, respectively. We will first derive a uniform bound on the per-iteration decrease of the objective function value, and then deduce the complexity analysis from Theorem \ref{th:FW}. In both algorithms we use a generic bound on the backtracking parameter.

\begin{lemma}\label{lem:boundbacktrack}
Let $\{\scrL_{k}\}_{k\in\N}$ be the sequence of Lipschitz estimates produced by procedure $\mathtt{step}_{L}(f,v^{k},x^{k},\scrL^{k-1})$ and $\{\mu_{k}\}_{k\in\N}$ the sequence of \ac{GSC}-parameter estimates produced by $\mathtt{step}_{M}(f,v^{k},x^{k},\mu^{k-1})$, respectively. We have $\scrL_{k}\leq \max\{\scrL_{-1},\gamma_{u}L_{\nabla f}\}$ and $\mu^{k}\leq\max\{\mu_{-1},\gamma_{u}M_{f}\}$.
\end{lemma}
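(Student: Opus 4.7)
The plan is to prove both bounds by the same inductive template: once the trial parameter inside the backtracking routine exceeds the ``true'' constant, the accept-test is passed and no further increase is triggered. Under the standard reading that the \textbf{IF} block iterates until both acceptance conditions hold, any overshoot is at most a factor $\gamma_u$ above the true constant. Accordingly, I will first establish a ``sufficient-parameter principle'' for each of the two routines, and then conclude by a simple induction.

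For $\mathtt{step}_L$, I would prove: if $\tilde L \geq L_{\nabla f}$, then both $x^k + \alpha v^k \in \dom f$ and $f(x^k + \alpha v^k) \leq Q_L(x^k,\alpha,\tilde L)$ hold. A continuity argument does this cleanly. Set $T = \sup\{t \in [0,\alpha] : x^k + s v^k \in \scrS(x^0)\ \forall s \in [0,t]\}$. On $[0,T]$ the segment sits inside the compact level set $\scrS(x^0)$, on which $\nabla f$ is $L_{\nabla f}$-Lipschitz; the usual descent lemma then gives $f(x^k + s v^k) \leq f(x^k) - s\,\gap(x^k) + \tfrac{\tilde L s^2}{2}\|v^k\|_2^2 = Q_L(x^k, s, \tilde L)$ for $s \in [0,T]$. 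Since $\alpha \leq \gap(x^k)/(\tilde L\|v^k\|_2^2)$, the right-hand side is majorized by $f(x^k) - s\,\gap(x^k)/2 \leq f(x^k)$, so the segment never leaves $\scrS(x^0)$ and hence $T = \alpha$. Domain membership is automatic since $\scrS(x^0)\subset\dom f$.

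For $\mathtt{step}_M$, the analogous claim reads: if $\tilde M \geq M_f$, then $x^k + \alpha v^k \in \dom f$ and $f(x^k + \alpha v^k) \leq Q_M(x^k, \alpha, \tilde M)$. The step $\alpha = \alpha_{\tilde M,\nu}(x^k)$ obeys exactly the same analysis as Proposition~\ref{prop:feasible} with $\tilde M$ in place of $M_f$, so $\metric_\nu(x^k, x^k+\alpha v^k) = \alpha M_f \delta_\nu(x^k) \leq \alpha \tilde M \delta_\nu(x^k) < 1$, and Lemma~\ref{lem:Dikin} places $x^k + \alpha v^k$ in $\dom f$. The GSC descent lemma \eqref{eq:up} with the true $M_f$ yields $f(x^k+\alpha v^k) \leq f(x^k) - \alpha\,\gap(x^k) + \omega_\nu(\alpha M_f\delta_\nu(x^k))\,\alpha^2 \ce(x^k)^2$; monotonicity of $\omega_\nu$ on $[0,1)$ lets us replace $M_f$ by the larger $\tilde M$ without decreasing the upper bound, producing exactly $Q_M(x^k,\alpha,\tilde M)$.

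With both principles in place, the lemma reduces to a one-step induction. For each call of $\mathtt{step}_L$, the returned $\scrL_k$ is either the initial trial $\tilde L \leq \scrL_{k-1}$ (acceptance on the first try), or, at the moment the backtracking loop terminates, equals $\gamma_u \tilde L'$ where the previous trial $\tilde L'$ was strictly less than $L_{\nabla f}$ (otherwise the test would already have passed), so $\scrL_k \leq \gamma_u L_{\nabla f}$. Hence $\scrL_k \leq \max\{\scrL_{k-1}, \gamma_u L_{\nabla f}\}$, and induction gives $\scrL_k \leq \max\{\scrL_{-1},\gamma_u L_{\nabla f}\}$. The proof of $\mu_k \leq \max\{\mu_{-1},\gamma_u M_f\}$ is verbatim, now invoking the $\mathtt{step}_M$ principle. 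The only genuinely delicate ingredient is the segment-inclusion step for the Lipschitz case, since the acceptance threshold uses $\tilde L$ while the descent lemma uses $L_{\nabla f}$; the rest is bookkeeping.
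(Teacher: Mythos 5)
Your proof is correct and takes the same approach as the paper: bound the backtracking output by $\max\{\scrL_{k-1},\gamma_u L_{\nabla f}\}$ (resp.\ $\max\{\mu_{k-1},\gamma_u M_f\}$) via the observation that the accept test must succeed once the trial parameter reaches the true constant, so any overshoot is at most a factor $\gamma_u$, and then conclude by a one-step induction. The paper's proof dispatches the termination/overshoot step with ``it is clear''; your level-set continuity argument for $\mathtt{step}_L$ and the $\omega_\nu$-monotonicity argument for $\mathtt{step}_M$ are a correct, more detailed justification of precisely that step.
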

\begin{proof}
We proof the statement only for the sequence $\{\scrL_{k}\}_{k}$. The claim for $\{\mu_{k}\}_{k\in\N}$ can be shown in the same way. By construction of the backtracking procedure we know that if the sufficient decrease condition is evaluated successfully at the first run, then $\scrL_{k-1}\geq \scrL_{k}\geq\gamma_{d}\scrL_{k-1}$. If not, then it is clear that $\scrL_{k}\leq\gamma_{d}L_{\nabla f}.$ Hence, for all $k\geq 0$, $\scrL_{k}\leq\max\{\gamma_{d}L_{\nabla f},\scrL_{k-1}\}$. By backwards induction, it follows then $\scrL_{k}\leq  \max\{\scrL_{-1},\gamma_{u}L_{\nabla f}\}$. 
\end{proof}

\subsubsection{Analysis of \texttt{LBTFWGSC}}

Calling Algorithm \texttt{LBTFWGSC} at position $x^{k}$ generates a step size $\alpha_{k}$ and a local Lipschitz estimate $\scrL_{k}$ via $(\alpha_{k},\scrL_{k})=\mathtt{step}_{L}(f,v_{\FW}(x^{k}),x^{k},\scrL_{k-1})$. The thus produced new search point  satisfies $x^{k+1}=x^{k}+\alpha_{k}v^{k}\in\dom f\cap\scrX$, and 
\[
f(x^{k+1})\leq f(x^{k})-\alpha_{k}\gap(x^{k})+\frac{\scrL_{k}\alpha^{2}_{k}}{2}\beta_{k}^{2} \quad\text{where }\beta_{k}\equiv\beta(x^{k}).
\]
The reported step size is $\alpha_{k}=\min\left\{1,\frac{\gap(x^{k})}{\scrL_{k}\beta_{k}^{2}}\right\}$. For each of these possible realizations of this step size, we will provide a lower bound of the achieved reduction in the objective function value. \\
\vspace{0.2cm}
\textbf{Case 1:} If $\alpha_{k}=1$, then $\scrL_{k}\beta_{k}^{2}\leq\gap(x^{k})$ and $x^{k+1}=x^{k}+v^{k}\in\dom f\cap\scrX$. Hence, 
\begin{align*}
f(x^{k+1})\leq f(x^{k})-\gap(x^{k})+\frac{\scrL_{k}}{2}\beta_{k}^{2}\leq f(x^{k})-\frac{\gap(x^{k})}{2}.
\end{align*}
\textbf{Case 2: } If $\alpha_{k}=\frac{\gap(x^{k})}{\scrL_{k}\beta_{k}^{2}}$, then 
\begin{align*}
f(x^{k+1})\leq f(x^{k})-\frac{\gap(x^{k})^{2}}{2\scrL_{k}\beta_{k}^{2}}.
\end{align*}
Since  $\scrL_{k}\leq\max\{\gamma_{u}L_{\nabla f},\scrL_{-1}\}\equiv\bar{L}$ (Lemma \ref{lem:boundbacktrack}), we obtain the performance guarantee 
\begin{align*}
f(x^{k})-f(x^{k+1})\geq\min\left\{\frac{\gap(x^{k})}{2},\frac{\gap(x^{k})^{2}}{2\scrL_{k}\beta_{k}^{2}}\right\}\geq \min\left\{\frac{\gap(x^{k})}{2},\frac{\gap(x^{k})^{2}}{2\bar{L}\diam(\scrX)^{2}}\right\}.
\end{align*}
Set $\cc_{1}\equiv \frac{1}{2}$ and $\cc_{2}\equiv \frac{1}{2\bar{L}\diam(\scrX)^{2}}$, it therefore follows that
\[
f(x^{k})-f(x^{k+1})\geq\min\left\{\cc_{1}\gap(x^{k}),\cc_{2}\gap(x^{k})^{2}\right\}.
\]
In terms of the approximation error, this implies 
\[
h_{k}-h_{k+1}\geq \min\{\cc_{1}h_{k},\cc_{2}h^{2}_{k}\}.
\]
Thus, we can use a similar analysis as in the one in the proof of Theorem \ref{th:FW}, and obtain the following $\scrO(1/\eps)$ iteration complexity guarantee for method \texttt{LBTFWGSC}.
\begin{theorem}\label{th:backtrackL}
Suppose that Assumptions \ref{ass:1} and \ref{ass:DO} hold. Let $\{x^{k}\}_{k\geq 0}$ be generated by \texttt{LBTFWGSC}. For $x^{0}\in\scrX\cap\dom f$ and $\eps>0$, define $N_{\eps}(x^{0})\eqdef\inf\{k\geq 0\vert h_{k}\leq\eps\}.$ Then, for all $\eps>0$, 
\begin{equation}\label{eq:NBackTrackL}
N_{\eps}(x^{0})\leq \frac{\ln(\bar{L}\diam(\scrX)^{2}/h_{0})}{\ln(1/2)}+\frac{2\bar{L}\diam(\scrX)^{2}}{\eps},
\end{equation}
where $\bar{L}=\max\{\gamma_{u}L_{\nabla f},\scrL_{-1}\}$.
\end{theorem}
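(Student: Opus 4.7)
The plan is to reduce this to the two-phase argument already developed in the proof of Theorem~\ref{th:FW}, exploiting the fact that the preceding discussion of \texttt{LBTFWGSC} has already produced a per-iteration decrease of exactly the right form. Specifically, combining the case analysis for $\alpha_k \in \{1,\gap(x^k)/(\scrL_k\beta_k^2)\}$ with the uniform bound $\scrL_k \leq \bar{L}$ from Lemma~\ref{lem:boundbacktrack} and the inequality $\gap(x^k) \geq h_k$ (convexity), I obtain
\begin{equation*}
h_k - h_{k+1} \geq \min\{\cc_1 h_k,\ \cc_2 h_k^2\},\qquad \cc_1 = \tfrac{1}{2},\quad \cc_2 = \tfrac{1}{2\bar{L}\diam(\scrX)^2}.
\end{equation*}

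From here I would split the time horizon exactly as in Theorem~\ref{th:FW}: Phase~I consists of iterations with $h_k > \cc_1/\cc_2 = \bar{L}\diam(\scrX)^2$, where the minimum is attained by the linear term $\cc_1 h_k$, and Phase~II of iterations with $h_k \leq \cc_1/\cc_2$, where the quadratic term $\cc_2 h_k^2$ dominates. In Phase~I, iterating $h_{k+1} \leq (1-\cc_1)h_k = h_k/2$ and using the defining inequality $h_{T_1(x^0)-1} > \cc_1/\cc_2$ yields
\begin{equation*}
T_1(x^0) \leq \left\lceil \frac{\ln\!\bigl(\bar{L}\diam(\scrX)^2/h_0\bigr)}{\ln(1/2)}\right\rceil + 1.
\end{equation*}
In Phase~II, inverting the recursion $h_{k+1} \leq h_k - \cc_2 h_k^2$ gives $1/h_{k+1} \geq 1/h_k + \cc_2$, which telescopes from $k = T_1(x^0)$ to $k = N-1$ to produce $h_N \leq 1/(\cc_2(N - T_1(x^0) + 1))$ for $N > T_1(x^0)$.

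Evaluating this Phase~II bound at $N = N_\eps(x^0) - 1$ and using the definition of the stopping time ($h_{N_\eps(x^0)-1} > \eps$) yields $N_\eps(x^0) \leq T_1(x^0) + 1/(\cc_2 \eps)$; substituting the explicit values of $\cc_1,\cc_2$ gives precisely \eqref{eq:NBackTrackL}. I do not anticipate any genuine obstacle: the whole argument is a direct transcription of the closing paragraphs of the proof of Theorem~\ref{th:FW}, and the only new ingredient is Lemma~\ref{lem:boundbacktrack}, which ensures that $\cc_2$ does not drift with $k$. Note also that, unlike in the \texttt{FWGSC} case, no separate feasibility proof (analogue of Proposition~\ref{prop:feasible}) is needed, because the backtracking test inside $\mathtt{step}_L$ explicitly rejects candidate steps for which $x + \alpha v \notin \dom f$, so the iterates remain in $\dom f \cap \scrX$ by construction.
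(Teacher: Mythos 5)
Your proposal is correct and matches the paper's argument essentially verbatim: the paper also derives the per-iteration inequality $h_k - h_{k+1} \geq \min\{\cc_1 h_k, \cc_2 h_k^2\}$ with $\cc_1=\tfrac12$ and $\cc_2 = \tfrac{1}{2\bar{L}\diam(\scrX)^2}$ via the same two-case analysis on $\alpha_k$ together with Lemma~\ref{lem:boundbacktrack}, and then invokes the two-phase argument from Theorem~\ref{th:FW}. Your observation that feasibility is guaranteed by the domain-oracle test in $\mathtt{step}_L$, obviating an analogue of Proposition~\ref{prop:feasible}, is also consistent with the paper's treatment.
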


\subsubsection{Analysis of \texttt{MBTFWGSC}}

The complexity analysis of this algorithm is completely analogous to the one corresponding to Algorithm \texttt{LBTFWGSC}. The main difference between the two variants is the upper model employed in the local search. Calling \texttt{MBTFWGSC} at position $x^{k}$, generates the pair $(\alpha_{k},\mu_{k})=\mathtt{step}_{M}(f,v_{\FW}(x^{k}),x^{k},\mu_{k-1})$ such that 
\begin{align*}
f(x^{k+1})\leq f(x^{k})-\alpha_{k}\gap(x^{k})+\alpha^{2}_{k}\ce_{k}^{2}\omega_{\nu}(\mu_{k}\alpha_{k}\delta_{\nu}(x^{k})),
\end{align*}
where $\ce_{k}\equiv\ce(x^{k})$. The step size parameter $\alpha_{k}$ satisfies $\alpha_{k}=\min\{1,\ct_{\mu_{k},\nu}(x^{k})\}.$ We can thus apply Proposition \ref{prop:PBlower} in order to obtain the recursion
\[
h_{k+1}\leq h_{k}-\min\{\cc_{1}(\mu_{k},\nu) h_{k},\cc_{2}(\mu_{k},\nu) h_{k}^{2}\},
\]
involving the constants defined in \eqref{eq:c1} and \eqref{eq:c2}. By construction of the backtracking step, we know that $\mu_{k}\leq\max\{\gamma_{u}M_{f},\mu_{-1}\}\equiv\bar{M}$ (Lemma \ref{lem:boundbacktrack}). Hence, after setting $\cc_{1}\equiv \cc_{1}(\bar{M},\nu),\cc_{2}\equiv\cc_{2}(\bar{M},\nu)$, we arrive at 
\[
h_{k+1}\leq h_{k}-\min\{\cc_{1} h_{k},\cc_{2} h_{k}^{2}\}\qquad\forall k\geq 0.
\]
From here the complexity analysis proceeds as in Theorem \ref{th:FW}. The only change that has to be made is to replace the expressions $\cc_{1}(M_{f},\nu)$ and $\cc_{2}(M_{f},\nu)$ by the numbers $\cc_{1}(\bar{M},\nu)$ and $\cc_{2}(\bar{M},\nu)$, respectively. 
\begin{theorem}\label{th:backtrackM}
Suppose that Assumption \ref{ass:1} and \ref{ass:DO} hold. Let $\{x^{k}\}_{k\geq 0}$ be generated by \texttt{MBTFWGSC}. For $x^{0}\in\scrX\cap\dom f$ and $\eps>0$, define $N_{\eps}(x^{0})\eqdef\inf\{k\geq 0\vert h_{k}\leq\eps\}.$ Then, for all $\eps>0$, 
\begin{equation}\label{eq:NBackTrackL}
N_{\eps}(x^{0})\leq  \frac{\ln\left(\frac{\cc_{1}(\bar{M},\nu)}{h_{0}\cc_{2}(\bar{M},\nu)}\right)}{\ln(1-\cc_{1}(\bar{M},\nu))}+\frac{1}{\cc_{2}(\bar{M},\nu)\eps},
\end{equation}
where $\bar{M}=\max\{\gamma_{u}M_{f},\mu_{-1}\}.$
\end{theorem}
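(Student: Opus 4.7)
The plan is to reduce the analysis directly to the two-phase argument already carried out in the proof of Theorem \ref{th:FW}, simply by producing the same kind of recursion on the approximation errors $h_k$ but with the backtracking-induced constants $\cc_i(\bar M,\nu)$ in place of $\cc_i(M_f,\nu)$. The route passes through three steps: first, establish a per-iteration decrease using the backtracking certificate; second, replace the state-dependent parameter $\mu_k$ by the uniform bound $\bar M$; third, invoke the Phase I / Phase II argument from Theorem \ref{th:FW}.

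For the first step, I would use the fact that $\mathtt{step}_M$ returns a pair $(\alpha_k,\mu_k)$ satisfying $x^{k+1}=x^k+\alpha_k v_{\FW}(x^k)\in\dom f\cap\scrX$, with $\alpha_k=\min\{1,\ct_{\mu_k,\nu}(x^k)\}$, and the upper-model certificate $f(x^{k+1})\leq Q_M(x^k,\alpha_k,\mu_k)=f(x^k)-\eta_{x^k,\mu_k,\nu}(\alpha_k)$. Thus the per-iteration decrease is bounded below by the analytic quantity $\eta_{x^k,\mu_k,\nu}(\min\{1,\ct_{\mu_k,\nu}(x^k)\})$. This is precisely the quantity controlled by Lemmas \ref{lem:Deltage1} and \ref{lem:tildedelta}, which are purely analytical statements about the function $\eta_{x,M,\nu}$; reading them with the parameter triple $(x^k,\mu_k,\nu)$ in place of $(x^k,M_f,\nu)$ yields, exactly as in Proposition \ref{prop:PBlower},
\[
f(x^k)-f(x^{k+1})\geq \min\bigl\{\cc_1(\mu_k,\nu)\gap(x^k),\;\cc_2(\mu_k,\nu)\gap(x^k)^2\bigr\}.
\]

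For the second step, Lemma \ref{lem:boundbacktrack} provides the uniform upper bound $\mu_k\leq \bar M=\max\{\gamma_u M_f,\mu_{-1}\}$. Inspecting \eqref{eq:c1}--\eqref{eq:c2} shows that $\cc_1(\cdot,\nu)$ is nonincreasing in $M$ (in each of the three cases it is the minimum of $1/2$ and a quantity of the form $\const/M$) while $\cc_2(\cdot,\nu)$ does not depend on $M$ at all. Hence $\cc_1(\mu_k,\nu)\geq \cc_1(\bar M,\nu)$ and $\cc_2(\mu_k,\nu)=\cc_2(\bar M,\nu)$, and combining with convexity ($\gap(x^k)\geq h_k$) I arrive at the uniform recursion
\[
h_{k+1}\leq h_k-\min\bigl\{\cc_1(\bar M,\nu)\,h_k,\;\cc_2(\bar M,\nu)\,h_k^2\bigr\}, \qquad k\geq 0.
\]

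For the third step, I would recycle the proof of Theorem \ref{th:FW} verbatim with the constants $\cc_i\equiv\cc_i(\bar M,\nu)$. Phase I ($h_k>\cc_1/\cc_2$) contracts geometrically at rate $1-\cc_1$ and terminates in at most $\lceil \ln(\cc_1/(h_0\cc_2))/\ln(1-\cc_1)\rceil+1$ steps; Phase II ($h_k\leq \cc_1/\cc_2$) produces the reciprocal telescoping $1/h_{k+1}\geq 1/h_k+\cc_2$, giving the $1/(\cc_2\eps)$ sublinear tail. Summing these contributions produces \eqref{eq:NBackTrackL}. The only delicate point in the whole argument is the legitimacy of the substitution $M_f\rightsquigarrow \mu_k$ inside the bounds of Lemmas \ref{lem:Deltage1}--\ref{lem:tildedelta}; this is precisely what the sufficient-decrease test inside $\mathtt{step}_M$ is designed to guarantee, so there is no real obstacle.
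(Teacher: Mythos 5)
Your proposal is correct and follows the paper's proof essentially verbatim: obtain the per-iteration recursion from the backtracking certificate, replace $\mu_k$ by the uniform bound $\bar M$ via Lemma~\ref{lem:boundbacktrack}, and run the two-phase argument from Theorem~\ref{th:FW} with the constants $\cc_i(\bar M,\nu)$. In fact you make explicit a small step the paper glosses over, namely that $\cc_1(\cdot,\nu)$ is nonincreasing in $M$ while $\cc_2(\cdot,\nu)$ is independent of $M$, which is what actually licenses replacing $\cc_i(\mu_k,\nu)$ by $\cc_i(\bar M,\nu)$ in the recursion.
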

Note that a similar remark to Remark \ref{Rm:FWGSC_compl_simpl} can be made in this case.

\section{Linearly convergent variants of Frank-Wolfe for GSC functions}
\label{sec:linear}
%
In the development of all our linearly convergent variants, we assume that the feasible set is a polytope described by a system of linear inequalities. 
\begin{assumption}\label{ass:X}
The feasible set $\scrX$ admits the explicit representation 
\begin{equation}\label{eq:X}
\scrX\eqdef \{x\in\Rn\vert\BB x\leq b\},
\end{equation}
where $\BB\in\R^{m\times n}$ and $b\in\R^{m}$. 
\end{assumption}
\subsection{Local Linear Minimization Oracles}
\label{sec:LLOO}

\begin{algorithm}[t]
 \caption{\texttt{FWLLOO}}
 \label{alg:LLOO}
\begin{algorithmic}
\STATE {\bfseries Input: } $\scrA(x,r,c)$-LLOO with parameter $\rho\geq 1$ for polytope $\scrX$, $f\in\scrF_{M_{f},\nu}$. $\sigma_{f}>0$ convexity parameter.\\
 $x^{0}\in\dom f\cap \scrX$, and let $h_{0}= f(x^{0})-f^{\ast}$, and $c_{0}=1$.\\
 $r_{0}=\sqrt{\frac{2\gap(x^{0})}{\sigma_{f}}}$
\FOR{$k=0,1,\ldots$}
 \IF{$\gap(x^{k})>\eps$}
 \STATE Set $r_{k}^2=r_{0}^2c_{k}$;
\STATE Obtain $u^{k}=u(x^{k},r_{k},\nabla f(x^{k}))$ by querying procedure $\scrA(x^{k},r_{k},\nabla f(x^{k}))$;
\STATE Set $\alpha_{k}=\alpha_{\nu}(x^{k})$ by evaluating \eqref{eq:alpha2};
\STATE Set $x^{k+1}=x^{k}+\alpha_{k}(u^{k}-x^{k})$;
\STATE Set $c_{k+1}=c_{k}\exp(-\frac{1}{2}\alpha_{k}).$
\ENDIF
\ENDFOR
\end{algorithmic}
\end{algorithm}
\noindent
In this section we show how the local linear minimization oracle of \cite{GarHaz16} can be adapted to accelerate the convergence of \ac{FW}-methods for minimizing \ac{GSC} functions. In particular, we work out an analytic step-size criterion which guarantees linear convergence towards a solution of \eqref{eq:P}. The construction is a non-trivial modification of \cite{GarHaz16}, as it exploits the local descent properties of \ac{GSC} functions. In particular, we neither assume global Lipschitz continuity, nor strong convexity of the objective function. Instead, our working assumption in this section is the availability of a local linear minimization oracle, defined as follows:

\begin{definition}[\cite{GarHaz16}, Def. 2.5]
A procedure $\scrA(x,r,c)$, where $x\in\scrX,r>0,c\in\Rn,$ is a \acf{LLOO} with parameter $\rho\geq 1$ for the polytope $\scrX$ if $\scrA(x,r,c)$ returns a point $u(x,r,c)=u\in\scrX$ such that 
\begin{equation}\label{eq:u}
\forall y\in\ball(x,r)\cap\scrX:\inner{c,y}\geq\inner{c,u},\text{ and }\norm{x-u}_{2}\leq \rho r.
\end{equation}
\end{definition}
\noindent
We refer to \cite{GarHaz16} for illustrative examples for oracles $\scrA(x,r,c)$. In particular, \cite{GarHaz16} provide an explicit construction of the \ac{LLOO} for a simplex and for general polytopes. 
We further redefine the local norm as 
\[
\ce(x)\eqdef\norm{u(x,r,\nabla f(x))-x}_{x}\qquad\forall x\in\dom f.
\]
With an obvious abuse of notation, we also redefine
\begin{equation}
\label{eq:linear_delta_x}
\delta_{\nu}(x)\eqdef \left\{\begin{array}{ll} 
 \norm{u(x,r,\nabla f(x))-x}_{2} & \text{ if }\nu=2,\\
\frac{\nu-2}{2}\norm{u(x,r,\nabla f(x))-x}_{2}^{3-\nu}\norm{u(x,r,\nabla f(x))-x}_{x}^{\nu-2} & \text{if }\nu\in(2,3]. 
\end{array}\right.
\end{equation}
As in the previous sections, our goal is to come up with a step-size policy guaranteeing feasibility and a sufficient decrease. As will become clear in a moment, our construction relies on a careful analysis of the function 
\[
\psi_{\nu}(t)\eqdef t-\xi\omega_{\nu}(t\delta)t^{2}\qquad t\in[0,1/\delta),
\]
where $\xi,\delta\geq 0$ are free parameters. This function is also used in the complexity analysis of \texttt{FWGSC}, and thoroughly discussed in Appendix \ref{app:Appendix1}. In particular, the analysis in Appendix \ref{app:Appendix1} shows that $t\mapsto\psi_{\nu}(t)$ is concave, unimodal with $\psi_{\nu}(0)=0$, increasing on the interval $[0,t_{\nu}^{\ast})$ and decreasing on $[t_{\nu}^{\ast},\infty)$, where the cut-off value $t_{\nu}^{\ast}$ is defined in eq. \eqref{eq:t_opt}. Moreover, $\psi_{\nu}(t)\geq 0$ for $t \in [0,t_{\nu}^{\ast}]$. To facilitate the discussion, let us redefine this cut-off value in a way which emphasizes its dependence on structural parameters. We call
\begin{equation}\label{eq:t_opt_new}
t^{\ast}_{\nu}=t_{\nu}^{\ast}(\delta,\xi)\eqdef \left\{\begin{array}{ll}
\frac{1}{\delta}\ln\left(1+\frac{\delta}{\xi}\right) & \text{ if }\nu=2, \\
\frac{1}{\delta}\left[1-\left(1+\frac{\delta}{\xi}\frac{4-\nu}{\nu-2}\right)^{-\frac{\nu-2}{4-\nu}}\right] & \text{ if }\nu\in(2,3),\\
\frac{1}{\delta+\xi} & \text{ if }\nu=3.
\end{array}\right. 
\end{equation}

We construct our step size policy iteratively. Suppose we are given the current iterate $x^{k}\in\dom f\cap\scrX$, produced by $k$ sequential calls of \texttt{FWLLOO}, using a finite sequence $\{\alpha_{i}\}_{i=0}^{k-1}$ of step-sizes and search radii $\{r_{i}\}_{i=0}^{k-1}$. Set $c_{k}=\exp\left(-\sum_{i=0}^{k-1}\alpha_{i}\right)$. Call the \ac{LLOO} to obtain the target state $u^{k}=u(x^{k},r_{k},\nabla f(x^{k}))$, using the updated search radius $r_{k}=r_{0}c_{k}$. We define the next step size $\alpha_{k}=\alpha_{\nu}(x^{k})$ by setting
\begin{equation}\label{eq:alpha2}
\alpha_{\nu}(x^{k})\eqdef\min\left\{1,t^{\ast}_{\nu}\left(M_f\delta_{\nu}(x^{k}),\frac{2\ce(x^{k})^{2}}{\gap(x^{0})c_{k}}\right)\right\}.
\end{equation}
Update the sequence of search points to $x^{k+1}=x^{k}+\alpha_{k}(u^{k}-x^{k})$. By construction of $t^{k}_{\nu}\equiv t^{\ast}_{\nu}\left(M_f\delta_{\nu}(x^{k}),\frac{2\ce(x^{k})^2}{\gap(x^{0})c_{k}}\right)$, this point lies in $\dom f\cap\scrX$. To see this, consider first the case in which $\alpha_{k}=1<t^{k}_{\nu}$. Then, $\metric_{\nu}(x^{k+1},x^{k})=\alpha_{k}M_f\delta_{\nu}(x^{k})=M_f\delta_{\nu}(x^{k})<t^{k}_{\nu}M_f\delta_{\nu}(x^{k})<1$. On the other hand, if $\alpha_{k}=t^{k}_{\nu}$, then it follows from the definition of the involved quantities that $\metric_{\nu}(x^{k+1},x^{k})=\alpha_{k}M_f\delta_{\nu}(x^{k})<1$. 

Repeating this procedure iteratively yields a sequence $\{x^{k}\}_{k\in\N}$, whose performance guarantees in terms of the approximation error $h_{k}=f(x^{k})-f^{\ast}$ are described in the Theorem below.

\begin{theorem}
\label{thm:LCON}
Suppose Assumption \ref{ass:1} holds. Let $\{x^{k}\}_{k\geq 0}$ be generated by \texttt{FWLLOO}. Then, for all $k\geq 0$, we have $x^{\ast}\in\ball(x^{k},r_{k})$ and 
\begin{equation}\label{eq:fast}
h_{k}\leq \gap(x^{0})\exp\left(-\frac{1}{2}\sum_{i=0}^{k-1}\alpha_{i}\right)
\end{equation}
where the sequence $\{\alpha_{k}\}_{k}$ is constructed as in \eqref{eq:alpha2}.
\end{theorem}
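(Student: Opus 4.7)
I would prove both assertions simultaneously by induction on $k$, with inductive hypothesis at step $k$ being $x^{\ast}\in\ball(x^{k},r_{k})$ and $h_{k}\leq\gap(x^{0})c_{k}$, where $c_{k}=\exp(-\tfrac{1}{2}\sum_{i=0}^{k-1}\alpha_{i})$. The base case ($k=0$) is immediate: the choice $r_{0}^{2}=2\gap(x^{0})/\sigma_{f}$ combined with \eqref{eq:str_conv_lower_bound} gives $\|x^{0}-x^{\ast}\|_{2}^{2}\leq 2h_{0}/\sigma_{f}\leq r_{0}^{2}$, and $h_{0}\leq\gap(x^{0})=\gap(x^{0})c_{0}$ by convexity.

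For the inductive step, the \ac{LLOO} property applied at the feasible point $x^{\ast}\in\ball(x^{k},r_{k})\cap\scrX$ delivered by the inductive hypothesis yields $\inner{\nabla f(x^{k}),u^{k}}\leq\inner{\nabla f(x^{k}),x^{\ast}}$, hence $g_{k}\eqdef -\inner{\nabla f(x^{k}),u^{k}-x^{k}}\geq\inner{\nabla f(x^{k}),x^{k}-x^{\ast}}\geq h_{k}$ by convexity of $f$. Feasibility of $x^{k+1}\in\dom f\cap\scrX$ follows from the construction \eqref{eq:alpha2}: the explicit formula \eqref{eq:t_opt_new} for $t_{\nu}^{\ast}$ guarantees $\alpha_{k}M_{f}\delta_{\nu}(x^{k})<1$, so $x^{k+1}$ lies inside the Dikin ellipsoid $\scrW(x^{k};1)\subset\dom f$ by Lemma~\ref{lem:Dikin}. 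The GSC-descent lemma \eqref{eq:up} then yields
\[
h_{k+1}\leq h_{k}-\alpha_{k}g_{k}+\alpha_{k}^{2}\ce(x^{k})^{2}\omega_{\nu}(\alpha_{k}M_{f}\delta_{\nu}(x^{k})).
\]

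The crux is to bound the quadratic error term using the choice of $\alpha_{k}$. Setting $\xi_{k}\eqdef 2\ce(x^{k})^{2}/(\gap(x^{0})c_{k})$ and $\psi_{\nu}^{(k)}(t)\eqdef t-\xi_{k}t^{2}\omega_{\nu}(tM_{f}\delta_{\nu}(x^{k}))$, the step size $\alpha_{k}$ maximizes $\psi_{\nu}^{(k)}$ on $[0,1]$. Substituting the first-order condition at the unconstrained maximizer $t^{\ast}$ into the definition of $\psi_{\nu}^{(k)}(t^{\ast})$ produces the identity
\[
\psi_{\nu}^{(k)}(t^{\ast})=\frac{t^{\ast}}{2}+\frac{1}{2}t^{\ast 3}\xi_{k}M_{f}\delta_{\nu}(x^{k})\omega_{\nu}'(t^{\ast}M_{f}\delta_{\nu}(x^{k}))\geq \frac{t^{\ast}}{2},
\]
using that $\omega_{\nu}$ is non-decreasing on its domain (verifiable from its series expansion). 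Since $\psi_{\nu}^{(k)}$ is concave with $\psi_{\nu}^{(k)}(0)=0$, the ratio $\psi_{\nu}^{(k)}(t)/t$ is non-increasing, so the lower bound $\psi_{\nu}^{(k)}(\alpha_{k})\geq\alpha_{k}/2$ holds whether $\alpha_{k}=t^{\ast}$ or the clipping $\alpha_{k}=1$ is active. Rearranging this into the variable of interest gives
\[
\alpha_{k}^{2}\ce(x^{k})^{2}\omega_{\nu}(\alpha_{k}M_{f}\delta_{\nu}(x^{k}))\leq\frac{\alpha_{k}\gap(x^{0})c_{k}}{4}.
\]

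Feeding this estimate together with $g_{k}\geq h_{k}$ into the descent inequality gives $h_{k+1}\leq h_{k}(1-\alpha_{k})+\alpha_{k}\gap(x^{0})c_{k}/4$. Applying the inductive bound $h_{k}\leq\gap(x^{0})c_{k}$ with $\alpha_{k}\leq 1$ (so $1-\alpha_{k}\geq 0$) collapses this to
\[
h_{k+1}\leq\gap(x^{0})c_{k}\left(1-\frac{3\alpha_{k}}{4}\right)\leq\gap(x^{0})c_{k}\left(1-\frac{\alpha_{k}}{2}\right)\leq\gap(x^{0})c_{k}e^{-\alpha_{k}/2}=\gap(x^{0})c_{k+1},
\]
via the elementary chain $1-3x/4\leq 1-x/2\leq e^{-x/2}$ for $x\geq 0$. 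One final invocation of \eqref{eq:str_conv_lower_bound} then yields $\|x^{k+1}-x^{\ast}\|_{2}^{2}\leq 2h_{k+1}/\sigma_{f}\leq r_{k+1}^{2}$, closing the induction on containment. The main obstacle I anticipate is establishing the identity $\psi_{\nu}^{(k)}(t^{\ast})\geq t^{\ast}/2$ uniformly across $\nu\in[2,3]$, which must be read off from the first-order optimality condition and the monotonicity of $\omega_{\nu}$; once this is in hand, the remaining argument is elementary bookkeeping on top of the inductive hypothesis.
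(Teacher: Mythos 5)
Your proof is correct and follows essentially the same structure as the paper's: induction on $k$, base case via $r_{0}^{2}=2\gap(x^{0})/\sigma_{f}$ and \eqref{eq:str_conv_lower_bound}, the LLOO property to replace the oracle target with $x^{\ast}$ inside the GSC-descent lemma, and then a reduction to a positivity fact about the step-size maximizer. The only genuine deviation is in how you kill the quadratic error term. You prove the sharper bound $\psi_{\nu}^{(k)}(t^{\ast})\geq t^{\ast}/2$ by substituting the first-order optimality condition and invoking $\omega_{\nu}'\geq 0$, then propagate it down to $\alpha_{k}$ via concavity of $\psi_{\nu}^{(k)}$, obtaining $(1-3\alpha_{k}/4)$. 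The paper needs less: after the algebraic rewrite $h_{k+1}\leq(1-\alpha_{k}/2)\gap(x^{0})c_{k}-\tfrac{1}{2}\gap(x^{0})c_{k}\,\psi_{\nu}^{(k)}(\alpha_{k})$, it simply invokes $\psi_{\nu}^{(k)}(\alpha_{k})\geq 0$, which holds for all $\alpha_{k}\in[0,t_{\nu}^{\ast}]$ because $\psi_{\nu}^{(k)}(0)=0$ and $\psi_{\nu}^{(k)}$ is concave and increasing up to $t_{\nu}^{\ast}$. Your stronger inequality $\psi_{\nu}^{(k)}(\alpha_{k})\geq\alpha_{k}/2$ buys you a slightly better per-step factor that you then immediately throw away via $1-3x/4\leq 1-x/2$; it also introduces a dependence on $\omega_{\nu}'\geq 0$, which is true and used elsewhere in the paper (e.g. in the proof of Lemma \ref{lem:Deltage1}) but is not needed here. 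So this is a correct, marginally more elaborate route to the same inequality, not a different argument.
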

\begin{proof}
Let us define $\scrP(x^{0})\eqdef \left\{x\in\scrX\vert f(x)\leq f^{\ast}+\gap(x^{0})\right\}$. We proceed by induction. For $k=0$, we have $x^{0}\in\dom f\cap\scrX$ by assumption and $x^{0}\in\scrP(x^{0})$ by definition. \eqref{eq:str_conv_lower_bound} gives  
\begin{equation}\label{eq:linear_h0}
f(x^0)-f^{\ast} = h_{0}\geq\frac{\sigma_{f}}{2}\norm{x^0-x^{\ast}}^{2}_{2}.
\end{equation}
Let $u^{0}\equiv u(x^{0},r_{0},\nabla f(x^{0})),\delta_{0}\equiv \delta_{\nu}(x^{0}),\xi_{0}=\frac{2\ce(x^{0})^{2}}{\gap(x^{0})}$ and $\alpha_{0}=\alpha_{\nu}(x^{0})$ obtained by evaluating \eqref{eq:alpha2} with the cut-off value $t^{\ast}_{\nu}(M_{f}\delta_{0},\xi_{0})$. Since $r_{0} = \sqrt{\frac{2\gap(x^{0})}{\sigma_{f}}}\geq\sqrt{\frac{2h_{0}}{\sigma_{f}}}$, \eqref{eq:linear_h0} implies that $x^{\ast}\in\ball(x^{0},r_{0})$. The definition of the \ac{LLOO} gives us 
\begin{equation}\label{eq:solinside1}
\inner{\nabla f(x^{0}),u^{0}-x^{0}}\leq\inner{\nabla f(x^{0}),x^{\ast}-x^{0}}.
\end{equation}
Set $x^{1}=x^{0}+\alpha_{0}(u^{0}-x^{0})\in\dom f\cap\scrX$. The GSC-descent lemma \eqref{eq:up} gives then 
\begin{align*}
f(x^{1})& \leq f(x^{0})+\alpha_{0}\inner{\nabla f(x^{0}),u^{0}-x^{0}}+\alpha^{2}_{0}\ce(x^{0})^{2}\omega_{\nu}(\alpha_{0}M_{f}\delta_{0})\\
&  \overset{\eqref{eq:solinside1}}{\leq} f(x^{0})+\alpha_{0}\inner{\nabla f(x^{0}),x^{\ast}-x^{0}}+\alpha^{2}_{0}\ce(x^{0})^{2}\omega_{\nu}(\alpha_{0}M_{f}\delta_{0})\\
&\leq f(x^{0})+\alpha_{0}(f^{\ast}-f(x^{0}))+\alpha^{2}_{0}\ce(x^{0})^{2}\omega_{\nu}(\alpha_{0}M_{f}\delta_{0})\\
\end{align*}
Hence, writing the above in terms of the approximation error $h_{k}=f(x^{k})-f^{\ast}$, we obtain 
\begin{align*}
 h_{1}&\leq h_{0}(1-\alpha_{0})+\alpha^{2}_{0}\ce(x^{0})^{2}\omega_{\nu}(\alpha_{0}M_{f}\delta_{0})\\
 &\leq (1-\alpha_{0})\gap(x^{0})+\alpha^{2}_{0}\ce(x^{0})^{2}\omega_{\nu}(\alpha_{0}M_{f}\delta_{0})\\
 &=\left(1-\frac{\alpha_{0}}{2}\right)\gap(x^{0})-\frac{\gap(x^{0})}{2}\left(\alpha_{0}-\alpha^{2}_{0}\frac{2\ce(x^{0})^{2}}{\gap(x^{0})}\omega_{\nu}(\alpha_{0}M_{f}\delta_{0})\right).
 \end{align*}
We see that the second summand in the right-hand side above is just the value of the function $\psi_{\nu}(\alpha_{0})$, with the parameters $\delta=M_{f}\delta_{0}$ and $\xi=\xi_{0}=\frac{2\ce(x^{0})^{2}}{\gap(x^{0})}$. Hence, by construction, the second summand is nonnegative, which gives us the bound 
\[
h_{1}\leq (1-\frac{\alpha_{0}}{2})\gap(x^{0})\leq \exp(-\alpha_{0}/2)\gap(x^{0}).
\]

To perform the induction step, assume that for some $k\geq 1$ it holds  
\begin{equation}\label{eq:IH}
h_{k}\leq\gap(x^{0})c_{k},\, c_{k}\eqdef\exp\left(-\frac{1}{2}\sum_{i=0}^{k-1}\alpha_{i}\right).
\end{equation}
Since $c_{k}\in(0,1)$, we readily see that $x^{k}\in\scrP(x^{0})$. Call $\delta_{k}=\delta_{\nu}(x^{k})$ and $\xi_{k}=\frac{2\ce(x^{k})^{2}}{\gap(x^{0})c_{k}}$.  \eqref{eq:str_conv_lower_bound} leads to
\begin{equation}\label{eq:goodball}
\norm{x^{k}-x^{\ast}}^{2}_{2}\leq \frac{2 h_{k}}{\sigma_{f}}\leq \frac{2\gap(x^0)}{\sigma_f}c_k=r_0^2c_k\equiv r^{2}_{k}\Rightarrow x^{\ast}\in\ball(x^{k},r_{k}).
\end{equation}
Call the \ac{LLOO} to obtain the target point $u^{k}=\scrA(x^{k},r_{k},\nabla f(x^{k}))$. Using the definition of the \ac{LLOO}, \eqref{eq:goodball} implies 
\begin{equation}\label{eq:solinsidek}
\inner{\nabla f(x^{k}),u^{k}-x^{k}}\leq\inner{\nabla f(x^{k}),x^{\ast}-x^{k}}.
\end{equation}
Define the step size $\alpha_{k}=\alpha_{\nu}(x^{k})$, and declare the next search point $x^{k+1}=x^{k}+\alpha_{k}(u^{k}-x^{k})\in\dom f\cap\scrX$. By the discussion preceeding the Theorem, it is clear that $x^{k+1}\in\scrX\cap\dom f$. Via the \ac{GSC}-descent lemma and the induction hypothesis we arrive in exactly the same way as for the case $k=0$ to the inequality  
\[
h_{k+1}\leq \left(1-\frac{\alpha_{k}}{2}\right)\gap(x^{0})c_{k}-\frac{\gap(x^{0})c_{k}}{2}\left(\alpha_{k}-\alpha^{2}_{k}\frac{2\ce(x^{k})^{2}}{\gap(x^{0})c_{k}}\omega_{\nu}(\alpha_{k}M_{f}\delta_{k})\right).
\]
The construction of the step size $\alpha_{k}$ ensures that the expression in the brackets on the right-hand-side is non-negative. Consequently, we obtain
$h_{k+1} \leq (1-\alpha_k/2)\gap(x^{0})c_k \leq \gap(x^{0})c_k \exp(-\alpha_k/2) = \gap(x^{0})c_{k+1}$, which finishes the induction proof.
\end{proof}

To obtain the final linear convergence rate, it remains to lower bound the step size sequence $\alpha_{k}=\alpha_{\nu}(x^{k})$. Note that for all values $\nu\in[2,3]$, $t^{\ast}_{\nu}(\delta,\xi)$ is an increasing function of $\frac{1}{\delta}$ and $\frac{\delta}{\xi}$. Thus, our next steps are to lower bound the values of the non-negative sequences $\{\frac{1}{M_f\delta_{k}}\}_{k}$ and $\{\frac{M_f\delta_{k}}{\xi_{k}}\}_{k}$, where $\delta_{k}=\delta_{\nu}(x^{k})$ and $\xi_{k}=\frac{2\ce(x^{k})^{2}}{\gap(x^{0})c_{k}}$ for all $k\geq 0$. We have
\begin{equation*}
\frac{1}{M_{f}\delta_{k}} = \left\{\begin{array}{ll}
\frac{1}{M_f\norm{u^{k}-x^{k}}_{2}} &  \text{ if }\nu=2, \\
\frac{1}{\frac{\nu-2}{2}M_{f} \norm{u^{k}-x^{k}}_{2}^{3-\nu}\norm{u^{k}-x^{k}}_{x^{k}}^{\nu-2}}   & \text{ if }\nu \in (2,3].\\
\end{array}\right. 
\end{equation*}
By definition of the \ac{LLOO}, we have $\norm{u^{k}-x^{k}}_{2} \leq \min\{\rho r_k, \diam(\scrX)\}$. Thus, if $\nu=2$, we have 
\begin{align*}
\frac{1}{M_{f}\delta_{k}}\geq \frac{1}{M_f\min\{\rho r_k, \diam(\scrX)\}} \geq \frac{1}{M_f\rho r_k},
\end{align*}
while if $\nu>2$, we observe 
\begin{align*}
\frac{1}{M_{f}\delta_{k}}&\geq \frac{1}{\frac{\nu-2}{2}M_{f} \norm{u^{k}-x^{k}}_{2}^{3-\nu}L_{\nabla f}^{\frac{\nu-2}{2}}\norm{u^{k}-x^{k}}_{2}^{\nu-2}}= \frac{1}{\frac{\nu-2}{2}M_{f} L_{\nabla f}^{\frac{\nu-2}{2}} \norm{u^{k}-x^{k}}_{2}}\\
& \geq \frac{1}{\frac{\nu-2}{2}M_{f} L_{\nabla f}^{\frac{\nu-2}{2}} \min\{\rho r_k, \diam(\scrX)\}} \geq \frac{1}{\frac{\nu-2}{2}M_{f} L_{\nabla f}^{\frac{\nu-2}{2}} \rho r_k}.
\end{align*}
Furthermore, from the identity $\frac{2\gap(x^0)c_k}{\sigma_f}=r^{2}_{k}$, we conclude $\gap(x^0)c_k = \frac{\sigma_fr^{2}_{k}}{2}$. Hence,
\begin{equation*}
\frac{M_{f}\delta_{k}}{\xi_{k}} = \frac{M_{f}\delta_{\nu}(x^{k})\gap(x^{0})c_k}{2\ce(x^{k})^2} 
=\left\{\begin{array}{ll}\frac{M_f\norm{u^{k}-x^{k}}_{2}\frac{\sigma_fr^{2}_{k}}{2}}{2\norm{u^{k}-x^{k}}_{x^{k}}^2} & \text{ if }\nu=2, \\
\frac{\frac{\nu-2}{2}M_{f} \norm{u^{k}-x^{k}}_{2}^{3-\nu}\ce(x^{k})^{\nu-2}\frac{\sigma_fr^{2}_{k}}{2}}{2\ce(x^{k})^2} & \text{ if }\nu \in (2,3].
\end{array}\right. 
\end{equation*}

If $\nu=2$, we see that 
\begin{align*}
\frac{M_{f}\delta_{k}}{\xi_{k}} &\geq \frac{M_f\norm{u^{k}-x^{k}}_{2}\sigma_f r^{2}_{k}}{4L_{\nabla f}\norm{u^{k}-x^{k}}_{2}^2}= \frac{M_f\sigma_f r^{2}_{k}}{4L_{\nabla f}\norm{u^{k}-x^{k}}_{2}} \geq \frac{M_f\sigma_f r^{2}_{k}}{4L_{\nabla f}\min\{\rho r_k, \diam(\scrX)\}}\geq \frac{M_f\sigma_f r_{k}}{4\rho L_{\nabla f}},
\end{align*}
while if $\nu>2$, we have in turn
\begin{align*}
\frac{M_{f}\delta_{k}}{\xi_{k}}&=\frac{(\nu-2)M_{f} \norm{u^{k}-x^{k}}_{2}^{3-\nu}\sigma_fr^{2}_{k}}{8\ce(x^{k})^{4-\nu}}\geq \frac{(\nu-2)M_{f} \norm{u^{k}-x^{k}}_{2}^{3-\nu}\sigma_fr^{2}_{k}}{8 L_{\nabla f}^{\frac{4-\nu}{2}} \norm{u^{k}-x^{k}}_{2}^{4-\nu}}= \frac{(\nu-2)M_{f} \sigma_{f}r^{2}_{k}}{8 L_{\nabla f}^{\frac{4-\nu}{2}} \norm{u^{k}-x^{k}}_{2}}\\
& \geq \frac{(\nu-2)M_{f} \sigma_{f} r^{2}_{k}}{8 L_{\nabla f}^{\frac{4-\nu}{2}}\min\{\rho r_k, \diam(\scrX)\}}  \geq  \frac{(\nu-2)M_{f} \sigma_fr_{k}}{8\rho L_{\nabla f}^{\frac{4-\nu}{2}} } = \frac{(\nu-2)M_{f} L_{\nabla f}^{\frac{\nu-2}{2}} \sigma_fr_{k}}{8\rho L_{\nabla f} }.
\end{align*}
Denoting $\gamma_{\nu} = \frac{\nu-2}{2}M_{f} L_{\nabla f}^{\frac{\nu-2}{2}}$ for $\nu>2$ and $\gamma_{\nu} = M_{f}$ for $\nu=2$, and substituting these lower bounds to the expression for $t_{\nu}^*$, we obtain
\begin{equation*}
t^{k}_{\nu}\equiv t^{\ast}_{\nu}\left(M_{f}\delta_{\nu}(x^{k}),\frac{2\ce(x^{k})^{2}}{\gap(x^{k})c_{k}}\right)\geq \underline{t}_{k}\eqdef \left\{\begin{array}{ll}
\frac{1}{\gamma_{\nu}\rho r_k} \ln\left(1+\frac{\gamma_{\nu} \sigma_f r_{k}}{4\rho L_{\nabla f}}\right) & \text{ if }\nu=2, \\
\frac{1}{\gamma_{\nu}\rho r_k}\left[1-\left(1+\frac{\gamma_{\nu} \sigma_f r_{k}}{4\rho L_{\nabla f}}\frac{4-\nu}{\nu-2}\right)^{-\frac{\nu-2}{4-\nu}}\right] & \text{ if }\nu\in(2,3),\\
\frac{1}{\gamma_{\nu}\rho r_k}\frac{1}{1+\frac{4\rho L_{\nabla f}}{\gamma_{\nu} \sigma_f r_{k}}} & \text{ if }\nu=3.
\end{array}\right. 
\end{equation*}
For all $\nu\in[2,3]$, the minorizing sequence $\{\underline{t}_{k}\}_{k}$ has a limit $\frac{\sigma_f}{4\rho^2 L_{\nabla f}}$ as $r_k \to 0$. Moreover, as the search radii sequence $\{r_{k}\}_{k\in\N}$ is decreasing, basic calculus shows that the sequence $\{\underline{t}_{k}\}_{k}$ is monotonically increasing. Whence, we get a uniform lower bound of the cut-off values $\{t_{\nu}^{k}\}_{k}$ as 
 \begin{equation}\label{eq:alphalow_linear}
t^{k}_{\nu}\geq \underline{t} \eqdef  \left\{\begin{array}{ll}
\frac{1}{\gamma_{\nu}\rho r_0} \ln\left(1+\frac{\gamma_{\nu} \sigma_f r_{0}}{4\rho L_{\nabla f}}\right) & \text{ if }\nu=2, \\
\frac{1}{\gamma_{\nu}\rho r_0}\left[1-\left(1+\frac{\gamma_{\nu} \sigma_f r_{0}}{4\rho L_{\nabla f}}\frac{4-\nu}{\nu-2}\right)^{-\frac{\nu-2}{4-\nu}}\right] & \text{ if }\nu\in(2,3)\\
\frac{1}{\gamma_{\nu}\rho r_0}\frac{1}{1+\frac{4\rho L_{\nabla f}}{\gamma_{\nu} \sigma_f r_{0}}} & \text{ if }\nu=3.
\end{array}\right. 
\end{equation}

\begin{corollary}
Suppose Assumption \ref{ass:1} holds. Algorithm \texttt{FWLLOO} guarantees linear convergence in terms of the approximation error: 
\[
h_{k}\leq \gap(x^{0})\exp(-k\bar{\alpha}/2)\qquad\forall k\geq 0,
\]
where $\bar{\alpha}=\min\{\underline{t},1\}$ with $\underline{t}$ defined in \eqref{eq:alphalow_linear}.
\end{corollary}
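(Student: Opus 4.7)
The plan is to combine Theorem~\ref{thm:LCON} with the uniform lower bound on the step sizes derived immediately before the corollary. Specifically, Theorem~\ref{thm:LCON} already delivers the exponential-type bound
\[
h_k \leq \gap(x^0)\exp\!\left(-\tfrac{1}{2}\sum_{i=0}^{k-1}\alpha_i\right),
\]
so the only remaining task is to replace the sum of step sizes by a linear factor $k\bar{\alpha}$. This reduction is essentially accounting: whatever uniform lower bound we can prove for the sequence $\{\alpha_i\}$ will immediately yield the claimed geometric rate.

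Concretely, I would proceed as follows. First, recall that the step size in \texttt{FWLLOO} is $\alpha_k = \min\{1, t_\nu^k\}$ with $t_\nu^k = t^{\ast}_{\nu}(M_f\delta_\nu(x^k), 2\ce(x^k)^2/(\gap(x^0)c_k))$. The derivation preceding the corollary shows, via monotonicity of $t^{\ast}_{\nu}(\delta,\xi)$ in $1/\delta$ and $\delta/\xi$ combined with the bounds $\norm{u^k-x^k}_2 \leq \min\{\rho r_k,\diam(\scrX)\}$ and $\norm{\cdot}_{x^k}^2 \leq L_{\nabla f}\norm{\cdot}_2^2$, that $t_\nu^k \geq \underline{t}_k$, and moreover that $\underline{t}_k$ is monotonically increasing in $k$ (because $r_k$ is monotonically decreasing). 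Hence $t_\nu^k \geq \underline{t}_0 = \underline{t}$ uniformly in $k$, and consequently
\[
\alpha_k \geq \min\{1, \underline{t}\} = \bar{\alpha}\qquad \forall k\geq 0.
\]

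Plugging this uniform lower bound into the bound from Theorem~\ref{thm:LCON} gives
\[
h_k \leq \gap(x^0)\exp\!\left(-\tfrac{1}{2}\sum_{i=0}^{k-1}\alpha_i\right) \leq \gap(x^0)\exp(-k\bar{\alpha}/2),
\]
which is exactly the claim. There is no real obstacle here since the heavy lifting, namely the induction establishing $x^\ast \in \ball(x^k,r_k)$ together with the \ac{GSC}-descent estimate and the case analysis over $\nu\in[2,3]$, is already embedded in Theorem~\ref{thm:LCON} and in the derivation of \eqref{eq:alphalow_linear}. The only mild subtlety is justifying the monotonicity of $\underline{t}_k$ in $k$, which follows from an elementary calculus argument on the explicit formulas for $t^{\ast}_\nu$ together with the fact that $r_k^2 = r_0^2 c_k$ is decreasing because $c_{k+1} = c_k\exp(-\alpha_k/2) \leq c_k$.
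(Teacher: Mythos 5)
Your proposal is correct and follows essentially the same approach as the paper: invoke Theorem~\ref{thm:LCON}, show $\alpha_k \geq \bar{\alpha}$ via the monotonicity of $\underline{t}_k$ (which follows from $r_k$ decreasing), and substitute into the exponential bound. The paper's own proof is in fact terser than yours, simply stating that $\alpha_k\geq\bar\alpha$ is clear and concluding immediately.
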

\begin{proof}
It is clear that $\alpha_{k}\geq\bar{\alpha}=\min\{\underline{t},1\}$ for all $k\geq 0$. Hence $\exp\left(-\frac{1}{2}\sum_{i=0}^{k-1}\alpha_{i}\right)\leq\exp(-k\bar{\alpha}/2)$, and the claim follows.
\end{proof}
The obtained bound can be quite conservative since we used a uniform bound for the sequence $\underline{t}_{k}$. At the same time, since $r_k$ geometrically converges to 0 and for all $\nu\in[2,3]$, the minorizing sequence $\{\underline{t}_{k}\}_{k}$ has a limit $\frac{\sigma_f}{4\rho^2 L_{\nabla f}}$ as $r_k \to 0$, we may expect that after some burn-in phase, the sequence $\alpha_{k}$ can be bounded from below by $\frac{\sigma_f}{8\rho^2 L_{\nabla f}}$. This lower bound leads to the linear convergence as $h_{k}\leq \gap(x^{0})\exp(-k_0\bar{\alpha}/2))\exp(-(k-k_0)\frac{\sigma_f}{16\rho^2 L_{\nabla f}})$ for $k \geq k_0$, where the length of the burn-in phase $k_0$ is up to logarithmic factors equal to $\frac{1}{\bar{\alpha}}$. This corresponds to the iteration complexity 
\[
k_0 + \frac{16\rho^2 L_{\nabla f}}{\sigma_f}\ln\frac{\gap(x^{0})\exp(-k_0\bar{\alpha}/2))}{\eps}.
\]
Interestingly, the second term has the same form as the complexity bound for FW method under the LLOO proved in \cite{GarHaz16} with $\frac{\rho^2 L_{\nabla f}}{\sigma_f}$ playing the role of condition number. The only difference is that the global Lipschitz constant of the gradient is changed to the Lipschitz constant over the level set defined by the starting point.

\subsection{Away-Step Frank-Wolfe (ASFW)}
\label{sec:AwayStep}
We start with some preparatory remarks. Recall that in this section Assumption \ref{ass:X} is in place. Hence, $\scrX$ is a polytope of the form \eqref{eq:X}. By compactness and the Krein-Milman theorem, we know that $\scrX$ is the convex hull of finitely many vertices (extreme points) $\scrU\eqdef \{u_{1},\ldots,u_{q}\}$. Let $\Delta(\scrU)$ denote the set of discrete measures $\mu\eqdef (\mu_{u}:u\in\scrU)$ with $\mu_{u}\geq 0$ for all $u\in\scrU$ and $\sum_{u\in\scrU}\mu_{u}=1,\mu_{u}\geq 0$. A measure $\mu^{x}\in\Delta(\scrU)$ is a \emph{vertex representation} of $x$ if $x=\sum_{u\in\scrU}\mu^{x}_{u}u$. Given $\mu\in\Delta(\scrU)$, we define $\supp(\mu)\eqdef\{u\in\scrU\vert \mu_{u}>0\}$ and the set of active vertices $\scrU(x)\eqdef \{u\in\scrU\vert u\in\supp(\mu^{x})\}$ of point $x\in\scrX$ under the \emph{vertex representation} $\mu^{x}\in\Delta(\scrU)$. We use $I(x)\eqdef \{i\in\{1,\ldots,m\}\vert \BB_{i}x=b_{i}\}$ to denote the set of binding constraints at $x$. For a given set $V\subset\scrU$, we let $I(V)=\bigcap_{u\in V}I(u)$. \\

\noindent
For the linear minimization oracle generating the target point $s(x)$, we invoke an explicit tie-breaking rule in the definition of the linear minimization oracle. 
\begin{assumption}\label{ass:VLO}
The linear minimization procedure
\begin{align*}
s(x)\in\argmin_{d\in\scrX}\inner{\nabla f(x),d}
\end{align*}
returns a vertex solution, i.e. $s(x)\in\scrU$ for all $x\in\scrX$.
\end{assumption}
\begin{remark}
\cite{BecSht17} refer to this as a \emph{vertex linear oracle}. 
\end{remark}
\noindent
\ac{ASFW} needs also a target vertex which is as much aligned as possible with the same direction of the gradient vector at the current position $x$. Such a target vertex is defined as 
\begin{equation}\label{eq:away}
u(x)\in\argmax_{u\in\scrU(x)}\inner{\nabla f(x),u}
\end{equation}
At each iteration, we assume that the iterate $x^{k}$ is represented as a convex combination of active vertices $x^{k}=\sum_{u\in\scrU}\mu^{k}_{u}u$, where $\mu^{k}\in \Delta(\scrU)$. In this case, the sets $U^{k}= \scrU(x^{k})$ and the carrying measure $\mu^{k}=\mu^{x^{k}}$ provide a compact representation of $x^{k}$. The \ac{ASFW} scheme updates the thus described representation $(U^{k},\mu^{k})$ via the \emph{vertex representation updating} (VRU) scheme, as defined in \cite{BecSht17}. A single iteration of \ac{ASFW} can perform two different updating steps:
\begin{enumerate}
\item \emph{Forward Step}: This update is constructed in the same way as \texttt{FWGSC}. 
\item \emph{Away Step}: This is a correction step in which the weight of a single vertex is reduced, or even nullified. Specifically, the away step regime builds on the following ideas: Let $x\in\scrX$ be the current position of the algorithm with vertex representation $x=\sum_{u\in\scrU}\mu^{x}_{u}u$. Pick $u(x)$ as in \eqref{eq:away}. Define the \emph{away direction} 
\begin{equation}\label{eq:vAS}
v_{\AS}(x)\eqdef x-u(x),
\end{equation}
 and apply the step size $t>0$ to produce the new point 
\begin{align*}
x^{+}_{t}&=x+tv_{\AS}(x)\\
&=\sum_{u\in\scrU(x)\setminus\{u(x)\}}(1+t)\mu^{x}_{u}u+\left(\mu^{x}_{u(x)}(1+t)-t\right)u(x).
\end{align*}
Choosing $t\equiv \bar{t}(x)\eqdef\frac{\mu^{x}_{u(x)}}{1-\mu^{x}_{u(x)}}$ eliminates the vertex $u=u(x)$ from the support of the current point $x$ and leaves us with the new position
$x^{+}=x^{+}_{\bar{t}(x)}=\sum_{u\in\scrU(x)\setminus\{u(x)\}}\frac{\mu^{x}_{u}}{1-\mu^{x}_{u(x)}}u$. This vertex removal is called a \emph{drop step.}
\end{enumerate}
\begin{algorithm}[t]
	\caption{\texttt{ASFWGSC}}
	\label{alg:ASFWGSC}
	\begin{algorithmic}
	    \STATE $x^{0}\in\dom f\cap\scrU$ where $\mu^{1}_{u}=0$ for all $u\in\scrU\setminus \{x^{1}\}$ and $U^{1}=\{x^{1}\}$.
		\FOR{$k=0,1,\ldots$}
		\STATE Set $s^k=s(x^{k}),u^k=u(x^{k})$, and $v_{\AS}(x^k)=x^k-u^k$, $v_{\FW}(x^k)=s^k-x^k$
		\IF{$\inner{\nabla f(x^k), s^k-x^k}\leq \inner{\nabla f(x^k), x^k-u^k}$}
		\STATE  Set $v^k=v_{\FW}(x^k)$
		\ELSE
		\STATE Set $v^k=v_{\AS}(x^k)$
		\ENDIF
		\STATE Set $\beta_{k}=\norm{v^{k}}_{2}, \ce_{k}=\norm{v^{k}}_{x^{k}}, \bar{t}_{k}\equiv \bar{t}(x^{k})$ defined in \eqref{eq:bart}
		\STATE Find $\alpha_k=\argmin_{t\in[0,\bar{t}_{k}]} t\langle{\nabla f(x^k),v^k}\rangle + t^{2}\ce_{k}^{2}\omega_{\nu}(tM_{f}\delta_{\nu}(x^{k}))$
		\STATE Update $x^{k+1}=x^k+\alpha_{k}v^k$
		\IF{ $v^k=v_{\FW}(x^k)$}
		\STATE Update $U^{k+1}=U^{k}\cup\{s^k\}$
		\ELSE
		\IF {$v^k=v_{\AS}(x^{k})$ and $\alpha_{k}=\bar{t}_{k}$}
		\STATE Update $U^{k+1}=U^{k}\setminus\{u^k\}$ and $\mu^{k+1}$ via the VRU of \cite{BecSht17}.
        \ELSE \STATE Update $U^{k+1}=U^k$
		\ENDIF
		\ENDIF
		\ENDFOR
	\end{algorithmic}
\end{algorithm}
For the complexity analysis of \texttt{ASFWGSC}, we introduce some convenient notation. Define the vector field $v:\scrX\to\Rn$ by
\begin{equation}\label{eq:thatsv}
v(x)\eqdef \left\{\begin{array}{cc} 
v_{\FW}(x)& \text{ if a Forward Step is performed,}\\
v_{\AS}(x) & \text{if an Away Step is performed.}
\end{array}\right.
\end{equation}
The modified gap function is 
\begin{equation}\label{eq:GapAS}
G(x)\eqdef-\inner{\nabla f(x),v(x)}=\max\{\inner{\nabla f(x),x-s(x)},\inner{\nabla f(x),u(x)-x}\}.
\end{equation}

One observes that $G(x)\geq 0$ for all $x\in\dom f\cap\scrX$. To construct a feasible method, we need to impose bounds on the step-size. To that end, define 
\begin{equation}\label{eq:bart}
\bar{t}(x)\eqdef \left\{\begin{array}{cc} 
1 & \text{if a Forward Step is performed},\\
\frac{\mu_{u(x)}}{1-\mu_{u(x)}} & \text{if an Away Step is performed,}
\end{array}\right.
\end{equation}
where $\{\mu_{u}\}_{u\in\scrU}\in\Delta(\scrU)$ is a given vertex representation of the current point $x$, and $u(x)$ is the target state identified under the away-step regime \eqref{eq:away}.\\

\noindent
The construction of our step size policy is based on an optimization argument, similar to the one used in the construction of \texttt{FWGSC}. In order to avoid unnecessary repetitions, we thus only spell out the main steps. 

Recall that if $\metric_{\nu}(x,x+tv(x))<1$, then we can apply the generalized self-concordant descent lemma \eqref{eq:up}:
\begin{align*}
f(x+tv(x))\leq f(x)+t\inner{\nabla f(x),v(x)}+t^{2}\norm{v(x)}^{2}_{x}\omega_{\nu}(tM_{f}\delta_{\nu}(x)),
\end{align*}
where $\delta_{\nu}(x)$ is defined as in \eqref{eq:delta_x}, modulo the change $\beta(x)=\norm{v(x)}_{2}$ and $\ce(x)=\norm{v(x)}_{x}$. Using the modified gap function \eqref{eq:GapAS},  this gives the upper model for the objective function 
\begin{align*}
f(x+tv(x))\leq f(x)-G(x)\left[t-t^{2}\frac{\ce(x)^{2}}{G(x)}\omega_{\nu}(tM_{f}\delta_{\nu}(x))\right],
\end{align*}
provided that $G(x)>0$. This upper model is structurally equivalent to the one employed in the step-size analysis of \texttt{FWGSC}. Hence, to obtain an adaptive step-size rule in Algorithm \ref{alg:ASFWGSC}, we solve the concave program 
\begin{equation}\label{eq:AStau}
\max_{t\geq 0} \tilde{\eta}_{x,\nu}(t)\eqdef t-t^{2}\frac{\ce(x)^{2}}{G(x)}\omega_{\nu}(tM_{f}\delta_{\nu}(x)).
\end{equation}
As in Section \ref{sec:algo1}, and with some deliberate abuse of notation, let us denote the unique solution to this maximization problem by $\ct_{\nu}(x)$ (dependence on $M_{f}$ is suppressed here, since we consider this parameter as given and fixed in this regime). Building on the insights we gained from proving Proposition \ref{th:tau}, we thus obtain the familiarly looking characterization of the unique maximizer of the concave program \eqref{eq:AStau}: 
\begin{theorem}
\label{th:tau_ASFW}
The unique solution to program \eqref{eq:AStau} is given by 
\begin{equation}
\label{eq:t_x_opt_ASFW}
 \ct_{\nu}(x)=\left\{\begin{array}{ll}
\frac{1}{M_{f}\delta_{2}(x)}\ln\left(1+\frac{G(x)M_{f}\delta_{2}(x)}{\ce(x)^2 }\right) & \text{ if }\nu=2, \\
\frac{1}{M_{f}\delta_{\nu}(x)}\left[1-\left(1+\frac{M_{f}\delta_{\nu}(x) G(x)}{\ce(x)^2}\frac{4-\nu}{\nu-2}\right)^{-\frac{\nu-2}{4-\nu}}\right] & \text{ if }\nu\in(2,3),\\
\frac{G(x)}{M_{f}\delta_{3}(x) G(x)+\ce(x)^2} & \text{ if }\nu=3,
\end{array}\right. 
\end{equation}
where $\delta_{\nu}(x)$ is defined in eq. \eqref{eq:delta_x}, with $\beta(x)=\norm{v(x)}_{2}$ and $\ce(x)=\norm{v(x)}_{x}$ considering the vector field \eqref{eq:thatsv}.
\end{theorem}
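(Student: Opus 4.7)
The plan is to reduce this directly to Proposition~\ref{th:tau}, since program~\eqref{eq:AStau} is obtained from program~\eqref{eq:tau} by a positive rescaling together with a relabeling of the data. Assuming $G(x) > 0$ (otherwise $x$ is already stationary and there is nothing to prove), I would multiply $\tilde{\eta}_{x,\nu}(t)$ by the positive constant $G(x)$, obtaining
\[
G(x)\,\tilde{\eta}_{x,\nu}(t) \;=\; G(x)\,t \;-\; t^{2}\,\ce(x)^{2}\,\omega_{\nu}\!\bigl(tM_{f}\delta_{\nu}(x)\bigr),
\]
which matches the expression $\eta_{x,M_{f},\nu}(t)$ from~\eqref{eq:eta_def} verbatim under the substitution $\gap(x)\mapsto G(x)$, provided $\delta_{\nu}(x)$, $\ce(x)$ and $\beta(x)$ are reinterpreted with respect to the ASFW direction $v(x)$ from~\eqref{eq:thatsv} rather than $v_{\FW}(x)$. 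Because rescaling by a positive constant preserves both strict concavity and the location of the argmax, maximizing $\tilde{\eta}_{x,\nu}$ is equivalent to maximizing $\eta_{x,M_{f},\nu}$ after this substitution.

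The second step is then a direct invocation of Proposition~\ref{th:tau}. Applying~\eqref{eq:t_x_opt} with $\gap(x)$ replaced by $G(x)$ transfers each of the three branches for $\nu=2$, $\nu\in(2,3)$, and $\nu=3$ term-by-term into the three branches of~\eqref{eq:t_x_opt_ASFW}. Uniqueness of the maximizer is inherited from the strict concavity of $\eta_{x,M_{f},\nu}$ on $[0,1/(M_{f}\delta_{\nu}(x))]$ established in Appendix~\ref{app:Appendix1}, and domain considerations (namely, the admissibility constraint $tM_{f}\delta_{\nu}(x)<1$ coming from the GSC-descent lemma) carry over unchanged.

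The main obstacle is essentially bookkeeping rather than analysis: one has to verify that the GSC-descent lemma yielding the upper model~\eqref{eq:up} applies to the chosen direction $v(x)$ from~\eqref{eq:thatsv} irrespective of whether a Forward or Away Step is taken, and that the definition of $\delta_{\nu}(x)$ in terms of $\beta(x)=\norm{v(x)}_{2}$ and $\ce(x)=\norm{v(x)}_{x}$ is used consistently throughout. Since the GSC-descent lemma is direction-agnostic and the analysis of $\omega_{\nu}$ never exploits any special structure of $v_{\FW}(x)$, this verification is routine and no new estimates are required.
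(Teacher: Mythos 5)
Your reduction is correct and is exactly the route the paper takes: the paper simply asserts that Theorem~\ref{th:tau_ASFW} follows from ``the insights gained in proving Proposition~\ref{th:tau},'' and your observation that $G(x)\,\tilde{\eta}_{x,\nu}(t)$ coincides with $\eta_{x,M_{f},\nu}(t)$ under the replacement $\gap(x)\mapsto G(x)$ and the re-interpretation of $\beta(x),\ce(x),\delta_{\nu}(x)$ along the direction $v(x)$ from~\eqref{eq:thatsv} makes that reduction precise. Since positive rescaling preserves strict concavity and the argmax, the three-branch formula in~\eqref{eq:t_x_opt_ASFW} follows verbatim from~\eqref{eq:t_x_opt}, and nothing further is required.
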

Analogously to Proposition \ref{prop:feasible}, we see that when applying the step-size policy 
\begin{equation}\label{eq:ASstep}
\alpha_{\nu}(x)\eqdef \min\{\bar{t}(x),\ct_{\nu}(x)\},
\end{equation}
we can guarantee that $x^{k}\in\scrX$ for all $k\geq 0$. Indeed, inspecting the expression \eqref{eq:t_x_opt_ASFW} for each value $\nu\in[2,3]$, it is easy to see that $M_{f}\delta_{\nu}(x)\ct_{\nu}(x)<1$. Hence, if $\bar{t}(x)\leq\ct_{\nu}(x)$, it is immediate that $\bar{t}(x)M_{f}\delta_{\nu}(x)<1$. Consequently, $x+\alpha_{\nu}(x)v(x)\in\scrX\cap\dom f$ for all $x\in\scrX\cap\dom f$. Therefore, the sequence generated by Algorithm \ref{alg:ASFWGSC} is always well defined. In terms of the thus constructed process $\{x^{k}\}_{k\geq 0}$, we can quantify the per-iteration progress
$\Delta_{k}\equiv \tilde{\eta}_{x^{k},\nu}(\alpha_{k}),$ setting $\alpha_{k}\equiv\alpha_{\nu}(x^{k})$, via the following modified version of Lemma \ref{lem:tildedelta}: 
\begin{lemma}
If $\ct_{\nu}(x)\leq \bar{t}(x)$, we have
\begin{equation}\label{eq:ASDelta}
\Delta_{k}\geq\tilde{\Delta}_{k}\eqdef \left\{\begin{array}{ll} 
\frac{2 \ln(2)-1}{\diam(\scrX)} \min\left\{\frac{G(x^{k})}{M_{f}}, \frac{G(x^{k})^2}{\diam(\scrX)L_{\nabla f}}\right\} & \text{if }\nu=2,\\
\frac{\tilde{\gamma}_{\nu}}{\diam(\scrX)} \min\left\{\frac{G(x^{k}) }{\left(\frac{\nu}{2}-1\right)M_{f}L_{\nabla f}^{(\nu-2)/2}},\frac{-1}{\cb} \frac{G(x^{k})^2}{L_{\nabla f}\diam(\scrX)}\right\} & \text{ if }\nu\in(2,3),\\
\frac{2(1- \ln(2))}{\sqrt{L_{\nabla f}}\diam(\scrX)} \min\left\{\frac{G(x^{k})}{M_{f}}, \frac{G(x^{k})^2}{\sqrt{L_{\nabla f}}\diam(\scrX)}\right\}
& \text{ if }\nu=3,
\end{array}\right.
\end{equation}
where $\tilde{\gamma}_{\nu}\eqdef 1+\frac{4-\nu}{2(3-\nu)}\left(1-2^{2(3-\nu)/(4-\nu)}\right)$ and $\cb\eqdef \frac{2-\nu}{4-\nu}$.
\end{lemma}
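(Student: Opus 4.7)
The plan is to mirror the proof of Lemma~\ref{lem:tildedelta} almost verbatim, exploiting the structural identity between the upper model used in the design of \texttt{ASFWGSC} and the one analyzed in Section~\ref{sec:variant1}. Indeed, the \ac{GSC}-descent lemma \eqref{eq:up} combined with $G(x)=-\inner{\nabla f(x),v(x)}$ yields
\begin{equation*}
f(x+tv(x))\;\leq\;f(x)\;-\;G(x)\,\tilde{\eta}_{x,\nu}(t)\qquad\text{whenever }tM_{f}\delta_{\nu}(x)<1,
\end{equation*}
so that under the hypothesis $\ct_{\nu}(x^{k})\leq\bar{t}(x^{k})$ the step size $\alpha_{k}=\ct_{\nu}(x^{k})$ coincides with the unconstrained maximizer of the concave program~\eqref{eq:AStau}. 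Consequently the per-iteration progress admits the same analytical form as in Lemma~\ref{lem:tildedelta}, except that every occurrence of $\gap(x^{k})$ is replaced by $G(x^{k})$ and $v_{\FW}(x^{k})$ is replaced by the generic search direction $v(x^{k})$.

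First I would substitute the explicit expression for $\ct_{\nu}(x^{k})$ from Theorem~\ref{th:tau_ASFW} into $\tilde{\eta}_{x^{k},\nu}$. Following the cancellations carried out in Appendix~\ref{sec:delta_lower}, the resulting progress can be written in the form $\Psi_{\nu}(r_{k})/(M_{f}\delta_{\nu}(x^{k}))$ multiplied by the appropriate power of $G(x^{k})$, where
\begin{equation*}
r_{k}\eqdef \frac{M_{f}\,\delta_{\nu}(x^{k})\,G(x^{k})}{\ce(x^{k})^{2}},
\end{equation*}
and $\Psi_{\nu}$ is a concave nonnegative function vanishing at the origin. For instance, for $\nu=2$ one obtains $\Psi_{2}(r)=(r+1)\ln(1+r)-r$, which reduces the task to the very same one-dimensional scalar estimate analysed in Appendix~\ref{sec:delta_lower}.

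Next I would carry out the same case split as in Lemma~\ref{lem:tildedelta}, depending on whether $r_{k}$ is above or below an $\nu$-dependent threshold. In the ``large'' regime $\Psi_{\nu}(r_{k})$ is bounded from below by a universal positive constant (e.g.\ $2\ln 2-1$ at $r_{k}=1$ when $\nu=2$, or $\tilde{\gamma}_{\nu}$ when $\nu\in(2,3)$); combined with $\beta(x^{k})\leq\diam(\scrX)$ this produces the first term in the minimum in~\eqref{eq:ASDelta}. In the ``small'' regime a quadratic Taylor estimate $\Psi_{\nu}(r)\geq c'_{\nu}r^{2}$ near the origin, together with $\ce(x^{k})^{2}\leq L_{\nabla f}\beta(x^{k})^{2}\leq L_{\nabla f}\diam(\scrX)^{2}$, yields the second term, proportional to $G(x^{k})^{2}/(L_{\nabla f}\diam(\scrX)^{2})$. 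The uniform diameter bound $\norm{v_{\AS}(x)}_{2}=\norm{x-u(x)}_{2}\leq\diam(\scrX)$ ensures that these estimates remain valid in the away regime, so the switch between forward and away directions in Algorithm~\ref{alg:ASFWGSC} does not affect any of the constants.

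The main obstacle, exactly as in Lemma~\ref{lem:tildedelta}, is the intermediate range $\nu\in(2,3)$: $\omega_{\nu}$ contains the factor $(1-tM_{f}\delta_{\nu}(x))^{2(3-\nu)/(2-\nu)}$, and bounding the resulting fractional-power expression requires the elementary inequality
\begin{equation*}
1-(1+u)^{-a}\;\geq\;\min\bigl\{\,1-2^{-a},\;\tfrac{a}{1+a}\,u\,\bigr\},
\end{equation*}
which is precisely what produces the constant $\tilde{\gamma}_{\nu}=1+\tfrac{4-\nu}{2(3-\nu)}(1-2^{2(3-\nu)/(4-\nu)})$ appearing in~\eqref{eq:ASDelta}. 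Once this delicate case is handled, the endpoints $\nu=2$ and $\nu=3$ follow from elementary manipulations of $\ln(1+r)$ and $1/(1+r)$ respectively. In summary, no genuinely new analytic estimate is required beyond the substitution $\gap\to G$ and the observation that the $\ell_{2}$-diameter bound continues to hold for the away direction.
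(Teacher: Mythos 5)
Your proposal is correct and follows exactly the route the paper implicitly prescribes: the lemma is obtained from Lemma~\ref{lem:tildedelta} (via Lemma~\ref{lem:LB_progress} and Lemma~\ref{lem:Delta_lower} in Appendix~\ref{sec:delta_lower}) by the wholesale substitution $\gap\to G$ and $v_{\FW}\to v$, together with the observations that $\norm{v(x)}_2\le\diam(\scrX)$ and $\ce(x)^2\le L_{\nabla f}\norm{v(x)}_2^2$ hold for both the forward and the away direction. The hypothesis $\ct_{\nu}(x^k)\le\bar t(x^k)$ guarantees $\alpha_k=\ct_\nu(x^k)$ is the unconstrained maximizer of \eqref{eq:AStau}, so the scalar estimates of Appendix~\ref{app:Appendix2} apply without modification.
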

This means that at each iteration of Algorithm \ref{alg:ASFWGSC} in which $\alpha_{k}=\ct_{\nu}(x^{k})$, we succeed in reducing the objective function value by at least
\[
f(x^{k+1})\leq f(x^{k})-\tilde{\Delta}_{k}. 
\]
To proceed further with the complexity analysis of \texttt{ASFWGSC}, we need the following technical angle condition, valid for polytope domains:
\begin{lemma}[Corollary 3.1, \cite{BecSht17}]
\label{lem:Error}
For any $x\in\scrX\setminus\scrX^{\ast}$ with support $\scrU(x)$, we have 
\begin{equation}
\max_{u\in\scrU(x),w\in\scrU}\inner{\nabla f(x),u-w}\geq\frac{\Omega_{\scrX}}{\abs{\scrU(x)}}\max_{x^{\ast}\in\scrX^{\ast}}\frac{\inner{\nabla f(x),x-x^{\ast}}}{\norm{x-x^{\ast}}},
\end{equation}
where 
\begin{align*}
&\zeta\eqdef \min_{u\in\scrU,i\in\{1,\ldots,m\}:b_{i}>(\BB u)_{i}}(b_{i}-\BB_{i}u),\;\varphi\eqdef\max_{i\in\{1,\ldots,m\}\setminus I(x)}\norm{\BB_{i}},\text{ and }\Omega_{\scrX}\eqdef \frac{\zeta}{\varphi}.
\end{align*}
\end{lemma}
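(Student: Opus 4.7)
My plan is to decompose the vertex-pair gap on the left-hand side into an ``away'' piece and a ``forward'' piece, control each separately, and use the polytope's facet description to produce both the $\Omega_\scrX/|\scrU(x)|$-factor and the $\|x-x^*\|$-normalization on the right-hand side.

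First, I would fix $x^* \in \scrX^*$ and pick natural maximizers on the left:
\[
\bar u \in \argmax_{u \in \scrU(x)} \inner{\nabla f(x), u}, \qquad \bar w \in \argmin_{w \in \scrU} \inner{\nabla f(x), w},
\]
so that the left-hand side equals
\[
\inner{\nabla f(x), \bar u - \bar w} \;=\; \inner{\nabla f(x), \bar u - x} + \inner{\nabla f(x), x - \bar w},
\]
and each summand is nonnegative (the first because $x \in \conv(\scrU(x))$; the second because a linear functional on $\scrX = \conv(\scrU)$ attains its minimum at a vertex). Moreover, since $\bar w$ minimizes the linear functional over all of $\scrX$, one has $\inner{\nabla f(x), x - \bar w} \ge \inner{\nabla f(x), x - x^*}$ for every $x^* \in \scrX^*$. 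This forward piece is already a clean absolute bound, but it lacks the $\|x - x^*\|$ normalization required on the right-hand side; that normalization has to come from a separate geometric estimate on the away piece.

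The main step is therefore to prove
\[
\max_{u \in \scrU(x)} \inner{\nabla f(x), u - x} \;\geq\; \frac{\Omega_\scrX}{|\scrU(x)|} \cdot \frac{\inner{\nabla f(x), x - x^*}}{\|x - x^*\|}.
\]
For this, I would start from the vertex representation $x = \sum_{u \in \scrU(x)} \mu_u^x u$ and apply a pigeonhole argument over the $|\scrU(x)|$ weights to locate an active vertex $u^\dagger \in \scrU(x)$ whose displacement $u^\dagger - x$ is sufficiently aligned with $x - x^*$. The factor $\Omega_\scrX = \zeta/\varphi$ would then enter via the inequality description $\scrX = \{z : \BB z \le b\}$: at any vertex, the minimum positive slack of the non-binding constraints is at least $\zeta$ and the facet normals have norm at most $\varphi$, so the ``reach'' of a feasible direction from that vertex before hitting a facet is at least $\zeta/\varphi = \Omega_\scrX$. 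Applying this reach in the direction $(x - x^*)/\|x - x^*\|$ at the selected active vertex is what produces the desired bound.

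Combining the scaled away-piece bound with the nonnegative forward piece yields the claim. The principal obstacle is precisely the polyhedral geometry step: quantitatively translating the vertex-facet distance $\Omega_\scrX$ into a statement about how much each active vertex can be ``pulled'' along arbitrary feasible directions while still being expressible as a vertex-to-vertex displacement $u - w$ with $u \in \scrU(x)$ and $w \in \scrU$. This rests on the primal-dual interplay between the vertex representation of $x$ and the facet description of $\scrX$, and is the technical heart of the argument.
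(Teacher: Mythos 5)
The paper itself does not prove this lemma; it is stated as Corollary 3.1 of Beck and Shtern \cite{BecSht17} and invoked as a black box, so your blind proof has to stand on its own.

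It does not. The step you isolate as the technical heart, namely that the away piece alone satisfies
\begin{equation*}
\max_{u\in\scrU(x)}\inner{\nabla f(x),u-x}\;\geq\;\frac{\Omega_{\scrX}}{\abs{\scrU(x)}}\cdot\frac{\inner{\nabla f(x),x-x^{\ast}}}{\norm{x-x^{\ast}}},
\end{equation*}
is false. Take $x$ to be a nonoptimal vertex of $\scrX$. Then $\scrU(x)=\{x\}$, so the left side vanishes, while the right side equals $\Omega_{\scrX}\inner{\nabla f(x),x-x^{\ast}}/\norm{x-x^{\ast}}>0$ by convexity of $f$ and $x\notin\scrX^{\ast}$. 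Since your plan uses the forward piece only for nonnegativity, it collapses in precisely this regime, where all the content of the lemma is carried by the Frank--Wolfe gap. The geometric heuristic for $\Omega_{\scrX}=\zeta/\varphi$ is also not as clean as you suggest: $\zeta$ bounds only the slack of \emph{non-binding} facets at a vertex, and from a vertex the direction $(x-x^{\ast})/\norm{x-x^{\ast}}$ will generically violate a binding facet immediately, so there is no uniform ``reach'' of $\Omega_{\scrX}$ in that direction. The Beck--Shtern argument bounds the joint quantity $\max_{u\in\scrU(x),w\in\scrU}\inner{c,u-w}$ directly: mass is shifted off an active vertex carrying weight at least $1/\abs{\scrU(x)}$ (pigeonhole) toward a suitably chosen point of $\scrX$, so that the entire convex combination remains feasible and only non-binding slacks need to be tracked --- it does not rest on bounding the away piece in isolation, and indeed cannot.
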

To assess the overall iteration complexity of Algorithm \ref{alg:ASFWGSC} we consider separately the following cases:
\begin{itemize}
\item[(a)] If the step size regime $\alpha_{k}=\ct_{\nu}(x^{k})$ applies, then from Proposition \ref{prop:PBlower} we deduce that $f(x^{k+1})-f(x^{k})\leq-\Delta_{k}$, were
\[
\Delta^{k}\geq\min\{\cc_{1}(M_{f},\nu)G(x^{k}), \cc_{2}(M_{f},\nu) G(x^{k})^{2}\}.
\]
The multiplicative constants $\cc_{1}(M_{f},\nu),\cc_{2}(M_{f},\nu)$ are the ones defined in \eqref{eq:c1} and \eqref{eq:c2}. Hence, 
\[
f(x^{k+1})-f(x^{k})\leq-\min\{\cc_{1}(M_{f},\nu)G(x^{k}),\cc_{2}(M_{f},\nu) G(x^{k})^{2}\}.
\]
\item[(b)] Else, we apply the step size $\alpha_{k}=\bar{t}_{k}$. Then, there are two cases to consider: 
\begin{itemize}
\item[(b.i)] If a Forward Step is applied, then we know that $\bar{t}_{k}=1$. Since $1<\ct_{\nu}(x^{k})$, we can apply Lemma \ref{lem:Deltage1}, but now evaluating the function $\tilde{\eta}_{x,\nu}(t)$ at $t=1$, to obtain the bound 
\begin{align*}
\frac{\tilde{\eta}_{x^{k},\nu}(\bar{t}_{k})}{G(x^{k})}\geq \frac{1}{2}.
\end{align*}
This gives the per-iteration progress 
\[
f(x^{k+1})-f(x^{k})\leq -\frac{1}{2}G(x^{k}).
\]

\item[(b.ii)] If an Away Step is applied, then we do not have a lower bound on $\bar{t}_{k}$. However, we know that $f(x^{k+1})-f(x^{\ast})\leq f(x^{k})-f(x^{\ast})$. As in \cite{BecSht17}, we know that such drop steps can happen at most half of the iterations. 
\end{itemize}
\end{itemize}
Collecting these cases, we are ready to state and prove the main result of this section. 
\begin{theorem}
\label{th:ASFWGSC}
Let $\{x^{k}\}_{k\geq 0}$ be the trajectory generated by Algorithm \ref{alg:ASFWGSC} (\texttt{ASFWGSC}). Suppose that Assumption \ref{ass:1}, Assumption \ref{ass:X} and Assumption \ref{ass:VLO} are in place. Then, for all $k\geq 0$ we have
\begin{equation}
h_{k}\leq (1-\theta)^{k/2}h_{0}\leq \exp\left(-\theta\frac{k}{2}\right)h_{0}. 
\end{equation}
where 
$\theta\eqdef \min\left\{\frac{1}{2},\frac{\cc_{1}(M_{f},\nu)\Omega}{2\diam(\scrX)},\frac{\cc_{2}(M_{f},\nu)\Omega^{2}\sigma_{f}}{8}\right\}$, $\Omega\equiv\frac{\Omega_{\scrX}}{\abs{\scrU}}$. 
\end{theorem}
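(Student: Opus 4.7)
The plan is to classify each iteration of \texttt{ASFWGSC} into three mutually exclusive categories and bound the per-iteration decrease of $h_k\eqdef f(x^k)-f^\ast$ in each case, then combine with a standard counting argument for drop steps. The three cases are: (a) $\alpha_k=\ct_{\nu}(x^k)<\bar t_k$, where the analogue of Proposition~\ref{prop:PBlower} applies with $\gap$ replaced by the modified gap $G$; (b.i) a forward step with $\bar t_k=1\leq \ct_{\nu}(x^k)$, where Lemma~\ref{lem:Deltage1} applied to $\tilde\eta_{x,\nu}$ at $t=1$ gives $h_k-h_{k+1}\geq G(x^k)/2$; and (b.ii) a drop step with $\alpha_k=\bar t_k<\ct_{\nu}(x^k)$, for which I only assert monotonicity $h_{k+1}\leq h_k$.

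The first technical step is to translate lower bounds on $G(x^k)$ into lower bounds in terms of $h_k$. Because Assumption~\ref{ass:VLO} guarantees that the linear oracle returns a vertex,
\[
\max_{u\in\scrU(x^k),\,w\in\scrU}\inner{\nabla f(x^k),u-w}=\inner{\nabla f(x^k),u(x^k)-s(x^k)}\leq 2G(x^k).
\]
Combining this with Lemma~\ref{lem:Error}, convexity $\inner{\nabla f(x^k),x^k-x^\ast}\geq h_k$, and the strong-convexity estimate \eqref{eq:str_conv_lower_bound} which yields $\norm{x^k-x^\ast}_2\leq \sqrt{2h_k/\sigma_f}$, I obtain
\[
G(x^k)\geq \frac{\Omega}{2}\cdot \frac{h_k}{\norm{x^k-x^\ast}_2}\geq \frac{\Omega}{2}\sqrt{\frac{\sigma_f h_k}{2}},
\]
so $G(x^k)^2\geq \Omega^2\sigma_f h_k/8$. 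Using the trivial bound $\norm{x^k-x^\ast}_2\leq\diam(\scrX)$ gives $G(x^k)\geq \Omega h_k/(2\diam(\scrX))$. Finally, $G(x^k)\geq \gap(x^k)\geq h_k$ by convexity.

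Plugging these back into the per-iteration estimates, in case (a) Proposition~\ref{prop:PBlower} (adapted to $G$) gives
\[
h_k-h_{k+1}\geq \min\{\cc_1(M_f,\nu)G(x^k),\cc_2(M_f,\nu)G(x^k)^2\}\geq \min\left\{\tfrac{\cc_1(M_f,\nu)\Omega}{2\diam(\scrX)},\tfrac{\cc_2(M_f,\nu)\Omega^2\sigma_f}{8}\right\}h_k,
\]
while in case (b.i) the bound $h_k-h_{k+1}\geq G(x^k)/2\geq h_k/2$ yields the contraction factor $1/2$. Consequently every non-drop iteration satisfies $h_{k+1}\leq (1-\theta)h_k$ for exactly the $\theta$ in the statement. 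To finish, I invoke the bookkeeping from \cite{BecSht17}: forward steps enlarge $|U^k|$ by at most one, drop steps shrink it by exactly one, and $|U^k|\geq 1$ for all $k$, so the number of drop steps up to iteration $k$ is at most $\lceil k/2\rceil$. Combining the contraction on the at least $\lfloor k/2\rfloor$ non-drop iterations with monotonicity on the rest gives $h_k\leq (1-\theta)^{k/2}h_0\leq \exp(-\theta k/2)h_0$.

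The main obstacle will be carefully re-deriving the sufficient-decrease inequality in case (a) with $G$ in place of $\gap$: the proof of Proposition~\ref{prop:PBlower} rests on the analysis of $\eta_{x,M,\nu}$, and here we need the analogous concave program \eqref{eq:AStau} defining $\ct_\nu(x)$ of Theorem~\ref{th:tau_ASFW}, whose maximizer plays the same structural role. Feasibility of the iterates under the truncated step size \eqref{eq:ASstep} must also be confirmed (both when $\alpha_k=\ct_\nu(x^k)$ via $\metric_\nu(x^k,x^{k+1})<1$ and when $\alpha_k=\bar t_k$ via convexity of $\scrX$ and the VRU). A subtler point is verifying the drop-step counting in the presence of the vertex representation update, but this follows directly from Assumption~\ref{ass:VLO} and the scheme of \cite{BecSht17}.
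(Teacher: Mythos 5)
Your proposal is correct and follows essentially the same route as the paper's proof: the same case split into (a) optimal-step iterations, (b.i) full forward steps, and (b.ii) drop steps; the same translation of $G(x^k)$ into $h_k$ via Lemma~\ref{lem:Error}, the bound $\inner{\nabla f(x^k),u^k-s^k}\leq 2G(x^k)$, and \eqref{eq:str_conv_lower_bound}; and the same drop-step counting from \cite{BecSht17}. Your handling of case (b.i) via $G(x^k)\geq \gap(x^k)\geq h_k$ is in fact a slightly cleaner way to recover the $1/2$ term in the definition of $\theta$, but it is not a genuinely different argument.
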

\begin{proof}
We say that iteration $k$ is productive if it is either a Forward step or an Away step, which is not a drop step. Based on the estimates developed by inspecting thes cases (a) and (b.i) above, we see that at all productive steps we reduce the objective function value according to 
\[
f(x^{k+1})-f(x^{k})\leq-\min\left\{\min\{\frac{1}{2},\cc_{1}(M_{f},\nu)\}G(x^{k}),c_{2}(M_{f},\nu)G(x^{k})^{2}\right\}.
\]
We now develop a uniform bound for this decrease. 

First, we recall that on the level set $\scrS(x^{0})$, we have the strong convexity estimate 
\[
f(x^{k})-f^{\ast}\geq \frac{\sigma_{f}}{2}\norm{x^{k}-x^{\ast}}^{2}_{2}.
\]
Using Lemma \ref{lem:Error} and the definition of an Away-Step, we obtain the bound
\begin{align*}
\inner{\nabla f(x^{k}),u^{k}-s^{k}}\geq\frac{\Omega}{\norm{x^{k}-x^{\ast}}}\inner{\nabla f(x^{k}),x^{k}-x^{\ast}},
\end{align*}
where $\Omega\equiv\frac{\Omega_{\scrX}}{\abs{\scrU}}\leq\frac{\Omega_{\scrX}}{\abs{\scrU(x^{k})}}$. At the same time, 
\begin{align*}
\inner{\nabla f(x^{k}),u^{k}-s^{k}}&=\inner{\nabla f(x^{k}),u^{k}-x^{k}}+\inner{\nabla f(x^{k}),x^{k}-s^{k}}\\
&\leq 2\max\left\{\inner{\nabla f(x^{k}),u^{k}-x^{k}},\inner{\nabla f(x^{k}),x^{k}-s^{k}}\right\}\\
&=2G(x^{k}).
\end{align*}
Consequently, 
\begin{equation}\label{eq:G1}
G(x^{k})\geq\frac{1}{2}\inner{\nabla f(x^{k}),u^{k}-s^{k}},
\end{equation}
and 
\begin{align*}
G(x^{k})&\geq\frac{1}{2}\inner{\nabla f(x^{k}),u^{k}-s^{k}}\geq\frac{\Omega}{2\norm{x^{k}-x^{\ast}}}\inner{\nabla f(x^{k}),x^{k}-x^{\ast}}\\
&\geq \frac{\Omega}{2\norm{x^{k}-x^{\ast}}}(f(x^{k})-f^{\ast})\geq \frac{\Omega}{2\diam(\scrX)}(f(x^{k})-f^{\ast}).
\end{align*}
Furthermore, 
\begin{align*}
G(x^{k})^{2}&\geq \frac{\Omega^{2}}{4\norm{x^{k}-x^{\ast}}^{2}}(f(x^{k})-f^{\ast})^{2}\geq \frac{\Omega^{2}}{4}\frac{(f(x^{k})-f^{\ast})^{2}}{\frac{2}{\sigma_{f}}(f(x^{k})-f^{\ast})}\\
&=\frac{\Omega^{2}\sigma_{f}}{8}(f(x^{k})-f^{\ast}).
\end{align*}
Hence, in the cases (a) and (b.i), we can lower bound the per-iteration progress in terms of the approximation error $h_{k}=f(x^{k})-f^{\ast}$ as 
\begin{align*}
h_{k+1}-h_{k}\leq -\min\left\{\frac{1}{2},\frac{\cc_{1}(M_{f},\nu)\Omega}{2\diam(\scrX)},\frac{\cc_{2}(M_{f},\nu)\Omega^{2}\sigma_{f}}{8}\right\}h_{k}\equiv-\theta h_{k}.
\end{align*}
Since we are making a full drop step in at most $k/2$ iterations (recall that we initialize the algorithm from a vertex), we conclude from this that 
\[
h_{k}\leq (1-\theta)^{k/2}h_{0}\leq \exp\left(-\theta\frac{k}{2}\right)h_{0}. 
\]
\end{proof}

\begin{remark}
We would like to point out that Algorithm \texttt{ASFWGSC} does not need to know the constants $\sigma_f$, $L_{\nabla f}$ which may be hard to estimate.  Moreover, the constants in Lemma \ref{lem:Error} are also used only in the analysis and are not required to run the algorithm. Compared to \cite{carderera2021simple}, our \ac{ASFW} does not rely on the backtracking line search, but requires to evaluate the Hessian, yet without its inversion. Furthermore, our method does not involve the pyramidal width of the feasible set, which is in general extremely difficult to evaluate.
\end{remark}

\section{Numerical Results}
\label{sec:numerics}
%
We provide four examples to compare our methods with existing methods in the literature. As competitors we take Algorithm \ref{alg:FW}, with its specific versions \texttt{FW-Standard} and \texttt{FW-Line Search}. Recall that no general convergence proof for generalized self-concordance functions exists for either method. As further benchmarks, we implement the self-concordant Proximal-Newton (\texttt{PN})  and the Proximal-Gradient (\texttt{PG}) of \cite{CevKyrTra15,SunTran18}, as available in the SCOPT package\footnote{\url{https://www.epfl.ch/labs/lions/technology/scopt/}}. All codes are written in Python 3, with packages for scientific computing NumPy 1.18.1 and SciPy 1.4.1. The experiments were conducted on a Intel(R) Xeon(R) Gold 6254 CPU @ 3.10 GHz server with a total of 300 GB RAM and 72 threads, where each method was allowed to run on a maximum of two threads.

We ran all first order methods for a maximum 50,000 iterations and \texttt{PN}, which is more computationally expensive, for a maximum of 1,000 iterations. \texttt{FW-Line Search} is run with a tolerance of $10^{-10}$. In order to ensure that \texttt{FW-standard} generates feasible iterates for $\nu>2$, we check if the next iterate is inside the domain; If not we replace the step-size by 0, as suggested in \cite{carderera2021simple}. \texttt{PG} was only used in instances where $\nu=3$ as this method has been developed for standard self-concordant functions only \cite{CevKyrTra15}. Within \texttt{PN} we use monotone FISTA \cite{BecTeb09}, with at most 100 iterations and a tolerance of $10^{-5}$ to find the Newton direction. The step size used in \texttt{PG} is determined by the Barzilai-Borwein method \cite{NocWri00} with a limit of 100 iterations, similar to \cite{CevKyrTra15}.\\

Our comparison is made by the construction of versions of \emph{performance profiles} \cite{DolMor02}. In order to present the result, we first estimate $f^*$ by the best function value achieved by any of the algorithms, and compute the relative error attained by each of the methods at iteration $k$. More precisely, given the set of methods $\scrS$, test problems $\scrP$ and initial points $\scrI$, denote by $F_{ijl}$ the function value attained by method $j\in\scrS$ on problem $i\in\scrP$ starting from starting point $l\in \scrI$ . We define the estimate of the optimal value of problem $j$ by $f^*_j=\min\{F_{ijl}\vert j\in\scrS, l\in \scrI\}$. Denoting $\{x^{k}_{ijl}\}_{k}$ the sequence produced  by method $j$ on problem $i$ starting from point $l$, we define the \emph{relative error} as  $r^{k}_{ijl}=\frac{f(x^{k}_{ijl})-f^*_j}{f^*_j}$.

Now, for all methods $j\in\scrS$ and any relative error $\eps$, we compute the proportion of data sets that achieve a relative error of at most $\eps$ (\emph{successful instances}). We construct this statistic as follow: Let $\bar{N}_{j}$ denote the maximum allowed number of iterations for method $j\in\scrS$ (i.e for first-order methods 50,000 and for \texttt{PN} 1,000). Define $\scrI_{ij}(\eps)\eqdef\{l\in\scrI: \exists k\leq \bar{N}_j, r^{k}_{ijl}\leq \eps \}$. Then, the proportion of successful instances is 
\[
\rho_j(\eps)\eqdef \frac{1}{|\scrP||\scrI|}\sum_{i\in \scrP, l\in\scrI}\abs{\scrI_{ij}(\eps)}  \quad\text{(average success ratio).}
\]
We are also interested in comparing the iteration complexity (IC) and CPU time. For that purpose, we define $N_{ijl}(\eps)\eqdef\min\{0\leq k\leq \bar{N}_{j}\vert r^{k}_{ijl}\leq \eps\}$ as the first iteration in which method $j\in\scrS$ achieves a relative error $\eps$ on problem $i\in\scrP$ starting from point $l\in\scrI$. Analogously, $T_{ijl}(\eps)$ measures the minimal CPU time in which method $j\in\scrS$ achieves a relative error $\eps$ on problem $i\in\scrP$ starting from point $l\in\scrI$. For comparing IC and the CPU time across methods we construct the statistics 
\begin{align*}
\tilde{\rho}_{j}(\eps)&\eqdef \frac{1}{|\scrP|}\sum_{i\in\scrP}\frac{1}{|\scrI_{ij}|}\sum_{l\in\scrI_{ij}(\epsilon)}\frac{N_{ijl}(\eps)}{\min\{N_{isl}(\eps)\vert s\in\scrS\}}\quad\text{(average iteration ratio)},\\
\hat{\rho}_{j}(\eps)&\eqdef \frac{1}{|\scrP|}\sum_{i\in\scrP}\frac{1}{|\scrI_{ij}|}\sum_{l\in\scrI_{ij}}\frac{T_{ijl}(\eps)}{\min\{T_{isl}(\eps)\vert s\in\scrS\}}\quad\text{(average time ratio)}.
\end{align*}
Besides average performance, we also report the mean and standard deviation of $N_{ijl}(\eps)$ and $T_{ijl}(\eps)$ across starting points, for specific values of relative error $\eps$ for all tested methods and data sets. 

\subsection{Logistic regression} 
\label{sec:logistic}

Starting with \cite{Bac10}, the logistic regression problem has been the main motivation from the perspective of statistical theory to analyze self-concordant functions in detail. The objective function involved in this standard classification problem is given by 
\begin{equation}\label{eq:LogLoss}
f(x)=\frac{1}{p}\sum_{i=1}^{p}\ln\left(1+\exp\left(-y_{i}(\inner{a_{i},x}+\mu)\right)\right)+\frac{\gamma}{2}\norm{x}_{2}^{2}.
\end{equation}
Here $\mu$ is a given intercept, $y_{i}\in\{-1,1\}$ is the label attached to the $i$-th observation, and $a_{i}\in\Rn$ are predictors given as input data for $i=1,2,\ldots,p$. The regularization parameter $\gamma>0$ is usually calibrated via cross-validation. The task is to learn a linear hypothesis $x\in\Rn$. According to \cite{SunTran18}, we can treat \eqref{eq:LogLoss} as a $(M_{f}^{(3)},3)$-\ac{GSC} function minimization problem with $M_{f}^{(3)}\eqdef \frac{1}{\sqrt{\gamma}}\max\{\norm{a_{i}}_{2}\vert 1\leq i\leq p\}$. On the other hand, we can also consider it as a $(M_{f}^{(2)},2)$-\ac{GSC} minimization problem with $M_{f}^{(2)}\eqdef\max\{\norm{a_{i}}_{2}\vert 1\leq i\leq p\}$. It is important to observe that the regularization parameter $\gamma>0$ affects the self-concordant parameter $M_{f}^{(3)}$ but not $M_{f}^{(2)}$. This gains relevance, since usually the regularization parameter is negatively correlated with the sample size $p$. Hence, for $p\gg 1$, the \ac{GSC} constant $M_{f}$ could differ by orders of magnitude, which suggests considerable differences in the performance of numerical algorithms.\\
We consider the elastic net formulation of the logistic regression problems, by enforcing sparsity of the estimators via an added $\ell_{1}$ penalty. The resulting optimization problem reads as 
\begin{align*}
\min_{x\in\Rn}f(x) \quad\text{s.t. }\norm{x}_{1}\leq  R
\end{align*}
This introduces another free parameter $R>0$, which can be treated as another hyperparameter just like $\gamma$.
\begin{figure}[t]
	\centering
	\includegraphics[width=0.7\textwidth]{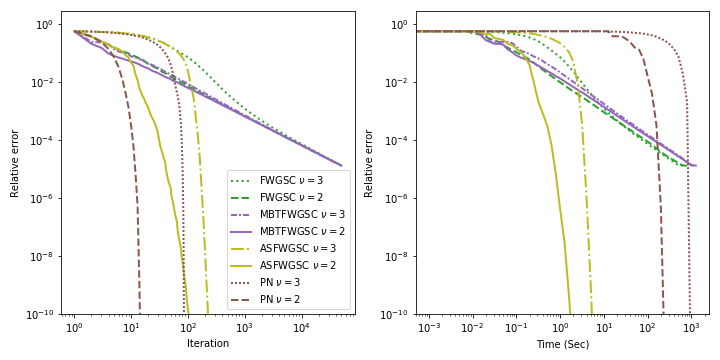}
	\caption{Comparison between $\nu=3$ and $\nu=2$ for data set a9a.}
	\label{fig:LogRegnuCompare}
\end{figure}

We test our algorithms using $R=10$, $\mu=0$ and $\gamma=1/{p}$, where $a_i$ and $y_i$ are based on data sets a1a-a9a from the LIBSVM library \cite{LIBSVM}, where the predictors are normalized so that $\norm{a_i}=1$. Hence, $M_{f}^{(2)}/M_{f}^{(3)}=p^{-1/2}$. For each data set, the methods were ran for 10 randomly generated starting points, where each starting point was chosen as a random vertex of the $\ell_1$ ball with radius 10.

\begin{figure}[b]
	\centering
	\begin{subfigure}[b]{0.32\textwidth}
		\includegraphics[width=\textwidth]{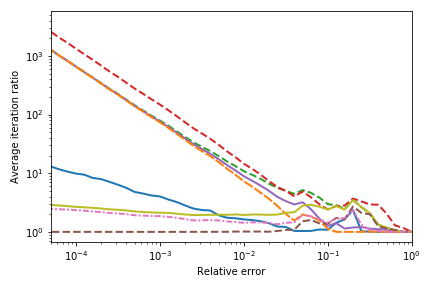}
		\caption{Average iteration ratio $\tilde{\rho}(\eps)$.}
		\label{fig:LogRegIteration}
	\end{subfigure}
	~ 
	\begin{subfigure}[b]{0.32\textwidth}
		\includegraphics[width=\textwidth]{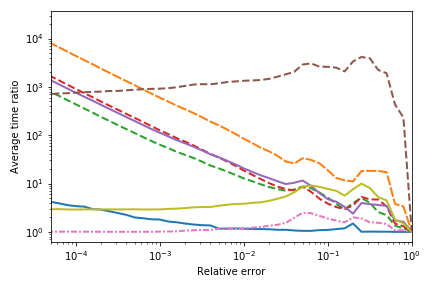}
		\caption{Average time ratio $\hat{\rho}(\eps)$.}
		\label{fig:LogRegTime}
	\end{subfigure}
	~ 
	\begin{subfigure}[b]{0.32\textwidth}
		\includegraphics[width=\textwidth]{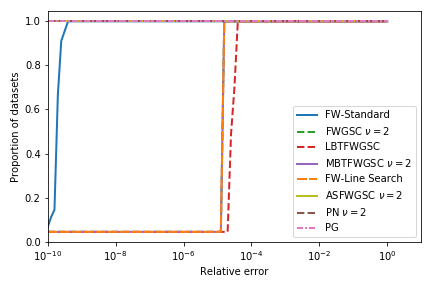}
		\caption{Average success ratio $\rho(\eps)$.}
		\label{fig:LogRegData}
	\end{subfigure}
	\caption{Performance profile for the logistic regression problem \eqref{eq:LogLoss} obtained after averaging  over 9 binary classification problems.}\label{fig:LogReg}
\end{figure}

We first compare the methods that are affected by the value of $\nu\in\{2,3\}$ and $M_f\in\{M_{f}^{(2)},M_{f}^{(3)}\}$, i.e. \texttt{FWGSC}, \texttt{MBTFWGSC}, \texttt{ASFWGSC}, and \texttt{PN}. We display the comparison of the average relative error over the starting points versus iteration and time for data set a9a in Figure~\ref{fig:LogRegnuCompare}. Note that for this data set we have $p=32,561$. It is apparent that the linearly convergent methods \texttt{ASFWGSC} and \texttt{PN}  gain the most benefit from the lower $M_f$ associated with the shift from $\nu=3$ to $\nu=2$, reducing both iteration complexity and time. Moreover, for \texttt{FWGSC} and \texttt{MBTFWGSC} the change of $\nu$ only seems to benefit the method in earlier iteration, but does not create any asymptotic speedup. Specifically, the benefit for \texttt{MBTFWGSC} is very small, probably since the backtracking procedure already takes advantage of the possible increase in the step-size that is partially responsible for the improved performance in the other methods. We observed the same behavior for all other data sets considered. Thus, we next compare these methods with $\nu=2$ to the \texttt{MBTFWGSC}, \texttt{FW-standard}, \texttt{FW-Line Search}, and \texttt{PG} and display the performance of all tested methods using the aggregate statistics $\rho(\eps),\tilde{\rho}(\eps),\hat{\rho}(\eps)$, in Figure~\ref{fig:LogReg}. Table~\ref{tbl:log_reg} reports statistics for $N(\eps)$ and $T(\eps)$ for each individual data set. The PG has the best performance in terms of time to reach a certain value of relative error, followed by \texttt{FW-standard} and \texttt{ASFWGSC}, where \texttt{FW-standard} is slightly better for relative error higher than $10^{-5}$ but becomes inferior to \texttt{ASFWGSC} for lower error values.

\begin{sidewaystable}
	\centering
	\begin{adjustbox}{width=1\textwidth}
\begin{tabular}{llr|m{1cm}m{0.8cm}m{1.7cm}|m{1.4cm}m{1cm}m{1.7cm}|m{1.4cm}m{1cm}m{1.7cm}|m{1.4cm}m{1cm}m{1.7cm}|m{1.4cm}m{1.1cm}m{1.7cm}|m{0.8cm}m{0.9cm}m{1.7cm}|m{0.7cm}m{0.9cm}m{1.7cm}|m{0.7cm}m{0.8cm}m{1.7cm}}
	\toprule
	\multicolumn{3}{c|}{Problem} & \multicolumn{3}{c|}{FW-Standard} & \multicolumn{3}{c|}{FWGSC $\nu=2$} & \multicolumn{3}{c|}{LBTFWGSC} & \multicolumn{3}{c|}{MBTFWGSC $\nu=2$} & \multicolumn{3}{c|}{FW-Line Search} & \multicolumn{3}{c|}{ASFWGSC $\nu=2$} & \multicolumn{3}{c|}{PN $\nu=2$} & \multicolumn{3}{c}{PG} \\
	{Name} &       n &     p &         iter &      time[s] &                error &             iter &       time[s] &                error &              iter &        time[s] &                error &             iter &        time[s] &                error &             iter &         time[s] &                error &            iter &      time[s] &                error &        iter &        time[s] &                error &        iter &      time[s] &                error \\
	\midrule
	\multicolumn{27} {c}{Relative error = 1e-04}\\
	\midrule
	a1a &     128 &  1605 &   93.7 (2.1) &  0.03 (0.00) &  7.39e-05 (1.57e-05) &  6467.2 (2129.3) &   2.64 (0.88) &  9.96e-05 (1.19e-06) &  16240.5 (5370.3) &    9.31 (3.08) &  9.99e-05 (3.67e-07) &  6432.3 (2120.3) &    5.27 (1.75) &  9.97e-05 (9.52e-07) &  6404.5 (2113.0) &    28.23 (9.56) &  9.97e-05 (7.97e-07) &      21.6 (2.7) &  0.02 (0.01) &  8.48e-05 (9.21e-06) &   9.0 (1.2) &    4.11 (0.33) &  5.33e-05 (2.41e-05) &  22.6 (4.2) &  {\bf 0.01 (0.00)} &  7.09e-05 (2.90e-05) \\
	a2a &     128 &  2265 &  96.9 (12.0) &  0.03 (0.00) &  8.90e-05 (1.10e-05) &  6095.9 (2007.5) &   3.49 (1.16) &  1.00e-04 (3.79e-08) &  15927.6 (5269.2) &   12.68 (4.23) &  9.98e-05 (7.33e-07) &  6062.2 (1993.9) &    6.88 (2.28) &  1.00e-04 (5.87e-08) &  6029.8 (1989.8) &   38.64 (12.84) &  9.99e-05 (2.87e-07) &      23.8 (3.0) &  0.02 (0.01) &  7.32e-05 (8.93e-06) &   9.0 (1.1) &    5.76 (0.59) &  6.36e-05 (2.26e-05) &  23.8 (2.6) &  {\bf 0.02 (0.00)} &  7.89e-05 (1.30e-05) \\
	a3a &     128 &  3185 &  98.7 (16.1) &  0.05 (0.01) &  9.54e-05 (3.65e-06) &  6090.8 (2002.0) &   5.20 (1.75) &  9.99e-05 (3.08e-07) &  16356.7 (5409.4) &   18.21 (6.03) &  9.99e-05 (3.40e-07) &  6065.6 (1954.0) &   11.23 (3.65) &  1.00e-04 (1.40e-08) &  6026.0 (1984.1) &   55.10 (18.37) &  9.98e-05 (5.03e-07) &      25.5 (2.1) &  0.04 (0.01) &  8.58e-05 (7.79e-06) &   9.7 (1.3) &    8.35 (0.81) &  2.55e-05 (1.92e-05) &  23.8 (3.4) & {\bf 0.02 (0.00)} &  8.19e-05 (1.46e-05) \\
	a4a &     128 &  4781 &  89.0 (12.9) &  0.07 (0.01) &  8.50e-05 (1.11e-05) &  5982.7 (1968.3) &   9.43 (3.33) &  9.95e-05 (1.39e-06) &  11324.8 (3735.1) &   19.62 (6.48) &  9.97e-05 (8.14e-07) &  5944.5 (1955.6) &   15.83 (5.23) &  1.00e-04 (5.48e-08) &  5916.7 (1949.6) &   83.60 (28.05) &  9.99e-05 (2.53e-07) &      28.2 (1.2) &  0.07 (0.02) &  8.86e-05 (7.53e-06) &   9.3 (1.3) &   15.80 (2.36) &  5.38e-05 (2.07e-05) &  23.6 (4.8) & {\bf 0.03 (0.01)} &  5.39e-05 (2.33e-05) \\
	a5a &     128 &  6414 &  117.2 (4.1) &  0.14 (0.01) &  6.61e-05 (1.34e-05) &    6862.0 (25.6) &  15.45 (0.43) &  1.00e-04 (8.50e-09) &    11166.4 (33.0) &   26.73 (0.93) &  1.00e-04 (5.59e-09) &    6845.2 (26.1) &   25.43 (1.24) &  1.00e-04 (8.30e-09) &    6829.6 (25.7) &  143.23 (13.10) &  1.00e-04 (4.22e-09) &      23.4 (6.5) &  0.09 (0.03) &  8.99e-05 (8.06e-06) &  10.1 (0.3) &   27.04 (1.46) &  3.64e-05 (7.11e-06) &  20.4 (2.9) & {\bf 0.03 (0.01)} &  7.64e-05 (1.84e-05) \\
	a6a &     128 & 11220 &   89.2 (4.9) &  0.21 (0.01) &  9.06e-05 (1.17e-05) &    6670.6 (20.8) &  28.69 (0.96) &  1.00e-04 (3.49e-09) &   12305.2 (256.8) &   55.67 (1.32) &  1.00e-04 (2.13e-09) &    6632.3 (21.4) &   51.29 (2.24) &  1.00e-04 (3.65e-09) &    6604.8 (21.7) &   271.42 (3.59) &  1.00e-04 (1.34e-08) &      25.0 (5.3) &  0.19 (0.03) &  8.34e-05 (9.55e-06) &   9.9 (0.5) &   50.91 (3.33) &  2.55e-05 (2.95e-05) &  20.4 (2.0) &  {\bf 0.06 (0.01)} &  7.25e-05 (1.79e-05) \\
	a7a &     128 & 16100 &   97.0 (4.6) &  0.32 (0.01) &  8.43e-05 (3.80e-06) &    6661.1 (21.5) &  41.57 (0.74) &  1.00e-04 (7.13e-09) &    11313.9 (30.0) &   71.98 (1.02) &  1.00e-04 (7.00e-09) &    6623.4 (20.6) &   69.14 (1.35) &  1.00e-04 (4.59e-09) &    6591.4 (19.7) &   431.07 (6.12) &  1.00e-04 (1.11e-08) &      25.1 (7.8) &  0.25 (0.09) &  8.54e-05 (1.90e-05) &  10.1 (0.3) &   76.51 (3.87) &  1.25e-05 (2.83e-06) &  22.1 (3.8) & {\bf 0.08 (0.02)} &  5.55e-05 (2.75e-05) \\
	a8a &     128 & 22696 &   86.6 (6.3) &  0.41 (0.03) &  9.58e-05 (1.13e-06) &    6698.6 (19.8) &  61.31 (1.38) &  1.00e-04 (2.78e-09) &    11399.7 (62.9) &  104.56 (1.18) &  1.00e-04 (4.41e-09) &    6661.2 (19.3) &  105.23 (4.21) &  1.00e-04 (1.26e-08) &    6668.6 (20.4) &   637.62 (7.40) &  1.00e-04 (6.42e-09) &      27.0 (6.6) &  0.47 (0.11) &  9.09e-05 (1.02e-05) &  10.1 (0.3) &  109.09 (3.48) &  1.80e-05 (6.37e-06) &  22.1 (3.0) & {\bf 0.11 (0.02)} &  6.87e-05 (2.12e-05) \\
	a9a &     128 & 32561 &   87.0 (0.0) &  0.61 (0.01) &  6.69e-05 (8.92e-06) &    6821.1 (18.3) &  90.59 (1.72) &  1.00e-04 (9.86e-09) &    11036.1 (28.9) &  143.32 (0.64) &  1.00e-04 (5.57e-09) &    6782.6 (17.0) &  140.57 (3.18) &  1.00e-04 (1.06e-08) &    6749.1 (18.5) &   941.31 (1.88) &  1.00e-04 (8.88e-09) &      30.2 (4.6) &  0.56 (0.12) &  6.10e-05 (1.65e-05) &  10.1 (0.3) &  165.20 (5.99) &  3.02e-05 (4.13e-06) &  21.1 (3.1) &  {\bf 0.15 (0.03)} &  7.83e-05 (1.90e-05) \\
	\midrule
	\multicolumn{27} {c}{Relative error = 1e-06}\\
	\midrule
	a1a &     128 &  1605 &  846.9 (129.4) &  0.21 (0.03) &  9.18e-07 (3.40e-08) &  *45019.0 (14946.0) &   17.27 (5.74) &  1.30e-05 (4.01e-06) &  *45038.0 (14889.0) &   25.55 (8.46) &  3.27e-05 (1.06e-05) &  *45018.2 (14948.4) &    32.02 (10.66) &  1.30e-05 (4.01e-06) &  *45017.6 (14950.2) &   197.89 (67.20) &  1.30e-05 (4.01e-06) &      41.5 (2.6) &  0.03 (0.01) &  8.72e-07 (5.47e-08) &  10.7 (1.3) &    4.96 (0.32) &  1.38e-07 (1.26e-07) &  37.3 (3.8) &  {\bf 0.02 (0.00)} &  7.57e-07 (2.33e-07) \\
	a2a &     128 &  2265 &  765.4 (141.6) &  0.26 (0.05) &  7.86e-07 (2.28e-07) &  *45054.9 (14838.3) &   25.55 (8.48) &  1.23e-05 (3.75e-06) &  *45822.4 (12535.8) &   36.15 (9.94) &  3.21e-05 (1.04e-05) &  *45350.1 (13952.7) &    49.31 (15.24) &  1.22e-05 (3.75e-06) &  *45194.8 (14418.6) &   290.94 (93.53) &  1.22e-05 (3.75e-06) &      41.9 (5.9) &  0.04 (0.01) &  8.52e-07 (9.17e-08) &  11.2 (1.6) &    7.24 (0.84) &  3.73e-08 (4.76e-08) &  37.4 (3.9) &  {\bf 0.03 (0.00)} &  6.08e-07 (3.10e-07) \\
	a3a &     128 &  3185 &  836.9 (111.4) &  0.41 (0.07) &  9.34e-07 (6.11e-08) &  *45020.8 (14940.6) &  38.56 (12.94) &  1.22e-05 (3.75e-06) &  *45036.3 (14894.1) &  49.78 (16.48) &  3.31e-05 (1.07e-05) &  *45220.3 (14342.1) &    72.77 (23.15) &  1.22e-05 (3.74e-06) &  *45157.0 (14532.0) &  420.58 (136.98) &  1.22e-05 (3.74e-06) &      48.0 (4.2) &  0.07 (0.01) &  8.76e-07 (7.06e-08) &  11.3 (1.6) &    9.86 (1.06) &  6.71e-08 (6.75e-08) &  35.3 (5.0) &  {\bf 0.03 (0.00)} &  5.85e-07 (3.02e-07) \\
	a4a &     128 &  4781 &  786.2 (119.0) &  0.61 (0.10) &  9.29e-07 (7.43e-08) &  *45101.5 (14698.5) &  69.02 (22.74) &  1.20e-05 (3.67e-06) &  *45030.5 (14911.5) &  75.72 (25.18) &  2.28e-05 (7.28e-06) &  *45254.7 (14238.9) &   116.82 (37.04) &  1.20e-05 (3.66e-06) &  *45230.7 (14310.9) &  636.09 (204.80) &  1.20e-05 (3.66e-06) &      53.3 (5.3) &  0.11 (0.02) &  8.47e-07 (1.32e-07) &  10.8 (1.3) &   18.48 (2.28) &  2.18e-07 (3.01e-07) &  33.1 (2.9) &  {\bf 0.04 (0.01)} &  7.57e-07 (2.06e-07) \\
	a5a &     128 &  6414 &   787.2 (90.5) &  0.91 (0.12) &  9.63e-07 (5.37e-08) &      *50001.0 (0.0) &  114.93 (2.82) &  1.37e-05 (9.74e-09) &      *50001.0 (0.0) &  116.70 (3.44) &  2.24e-05 (2.00e-08) &      *50001.0 (0.0) &    182.36 (6.11) &  1.37e-05 (9.62e-09) &      *50001.0 (0.0) &  1045.25 (91.14) &  1.37e-05 (5.91e-08) &      43.3 (7.3) &  0.16 (0.03) &  8.50e-07 (9.53e-08) &  11.1 (0.3) &   29.85 (1.28) &  6.62e-07 (2.44e-07) &  32.6 (4.6) &  \bf{0.05 (0.01)} &  5.63e-07 (2.88e-07) \\
	a6a &     128 & 11220 &   789.6 (46.1) &  1.74 (0.11) &  9.23e-07 (1.06e-07) &      *50001.0 (0.0) &  215.70 (3.56) &  1.33e-05 (7.75e-09) &      *50001.0 (0.0) &  223.99 (0.88) &  2.56e-05 (1.27e-07) &      *50001.0 (0.0) &   354.80 (13.93) &  1.33e-05 (7.81e-09) &      *50001.0 (0.0) &  2048.42 (23.44) &  1.33e-05 (8.16e-09) &      46.7 (7.0) &  0.33 (0.04) &  9.15e-07 (6.69e-08) &  11.1 (0.3) &   57.22 (2.90) &  1.27e-07 (1.69e-07) &  34.4 (3.5) &  {\bf 0.09 (0.02)} &  5.50e-07 (2.52e-07) \\
	a7a &     128 & 16100 &   803.1 (75.9) &  2.62 (0.23) &  8.90e-07 (3.91e-08) &      *50001.0 (0.0) &  311.24 (2.74) &  1.33e-05 (7.61e-09) &      *50001.0 (0.0) &  315.61 (1.26) &  2.27e-05 (1.55e-08) &      *50001.0 (0.0) &    488.12 (6.14) &  1.33e-05 (7.36e-09) &      *50001.0 (0.0) &  3262.60 (31.67) &  1.33e-05 (7.13e-09) &     47.9 (13.5) &  0.49 (0.16) &  8.77e-07 (9.21e-08) &  11.1 (0.3) &   84.39 (3.84) &  1.82e-07 (7.37e-08) &  34.2 (6.6) &  {\bf 0.12 (0.03)} &  4.99e-07 (3.34e-07) \\
	a8a &     128 & 22696 &   830.5 (55.8) &  3.96 (0.28) &  9.60e-07 (2.81e-08) &      *50001.0 (0.0) &  452.37 (4.36) &  1.34e-05 (6.81e-09) &      *50001.0 (0.0) &  457.78 (2.56) &  2.29e-05 (3.22e-08) &      *50001.0 (0.0) &   720.64 (10.06) &  1.34e-05 (3.26e-08) &      *50001.0 (0.0) &  4774.39 (16.59) &  1.34e-05 (7.11e-09) &     44.1 (11.5) &  0.76 (0.20) &  8.93e-07 (8.30e-08) &  11.1 (0.3) &  119.91 (3.69) &  1.72e-07 (1.25e-07) &  35.1 (6.0) &  {\bf 0.17 (0.04)} &  7.60e-07 (2.68e-07) \\
	a9a &     128 & 32561 &   747.0 (41.9) &  5.21 (0.27) &  9.59e-07 (3.10e-08) &      *50001.0 (0.0) &  625.00 (6.15) &  1.36e-05 (6.46e-09) &      *50001.0 (0.0) &  648.40 (1.63) &  2.22e-05 (1.52e-08) &      *50001.0 (0.0) &  1041.81 (10.64) &  1.36e-05 (6.40e-09) &      *50001.0 (0.0) &  6936.60 (26.73) &  1.36e-05 (1.78e-08) &      44.1 (8.0) &  0.81 (0.18) &  8.22e-07 (1.29e-07) &  12.0 (0.4) &  196.76 (8.13) &  8.46e-08 (2.08e-07) &  33.0 (3.5) &  {\bf 0.23 (0.03)} &  7.97e-07 (1.74e-07) \\
	\bottomrule
\end{tabular}
\end{adjustbox}
\caption{Results for logistic regression problem \eqref{eq:LogLoss}. Mean (standard deviation) across starting point realizations of number of iterations and CPU time in seconds to achieve a certain relative error or best relative error achieved by methods, as well as the relative error achieved at that iteration. We highlight in bold the best performance among all competitors.\\  * Maximum iteration number was reached without obtaining the desired relative error for at least one of the starting points.}
\label{tbl:log_reg}
\end{sidewaystable}

\subsection{Portfolio optimization with logarithmic utility}
\label{sec:portfolio}
\begin{figure}[b]
    \centering
    \begin{subfigure}[b]{0.32\textwidth}
        \includegraphics[width=\textwidth]{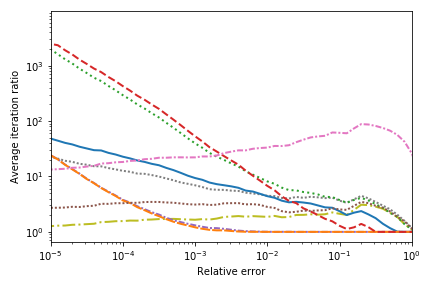}
        \caption{Average iteration ratio $\tilde{\rho}(\eps)$.}
        \label{fig:PortfolioIteration}
    \end{subfigure}
    ~ 
    \begin{subfigure}[b]{0.32\textwidth}
        \includegraphics[width=\textwidth]{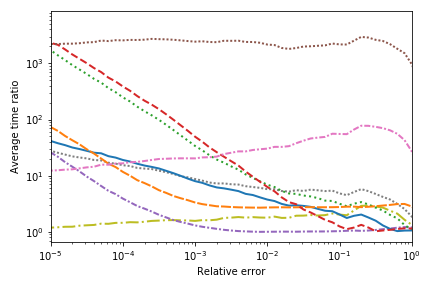}
        \caption{Average time ratio $\hat{\rho}(\eps)$.}
        \label{fig:PortfolioTime}
    \end{subfigure}
    ~ 
    \begin{subfigure}[b]{0.32\textwidth}
        \includegraphics[width=\textwidth]{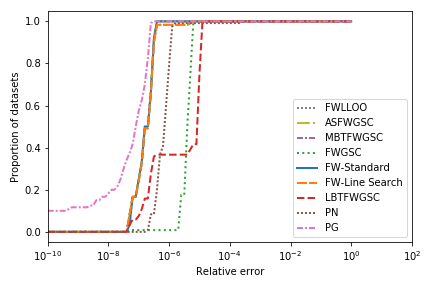}
        \caption{Average success ratio $\rho(\eps)$.}
        \label{fig:PortfolioData}
    \end{subfigure}
    \caption{Performance Profile for the portfolio selection problem \eqref{eq:portfolio} obtained after averaging over 12 synthetically generated data sets.}\label{fig:Portfolio}
\end{figure}
We study high-dimensional portfolio optimization problems with logarithmic utility \cite{Cov91}. In this problem there are $n$ assets with returns $r_{t}\in\R^{n}_{+}$ in period $t$ of the investment horizon. More precisely, $r_{t}$ measures the return as the ratio between the closing price of the current day $R_{t,i}$ and the previous day $R_{t-1,i}$, i.e. $r_{t,i}=R_{t,i}/R_{t-1,i},1\leq i\leq n$. The utility function of the investor is given as 
\[
f(x)=-\sum_{t=1}^{p}\log(r_{t}^{\top}x).
\]
Our task is to design a portfolio $x$ solving the problem 
\begin{equation}\label{eq:portfolio}
\min_{x\in\Rn} f(x)\text{ s.t.: }x_{i}\geq 0,\sum_{i=1}^{n}x_{i}=1.
\end{equation}
Since $f$ is the sum of $n$ standard self-concordant functions, we know that $f\in\scrF_{2,3}$ with effective domain $\dom f=\{x\in\Rn\vert r_{t}^{\top}x>0\text{ for all }1\leq t\leq p\}$. We remark that this self-concordant minimization problem gains also relevance in the universal prediction problem in information theory \cite{MerFed98} and online optimization \cite{CesLug06}. 

For this example, computing a LLOO with $\rho=\sqrt{n}$ is simple and a complete description can be found in \cite{GarHaz16}. Therefore, we also ran algorithm \texttt{FWLLOO}, where $\sigma_f$ is evaluated by the lowest eigenvalue of the Hessian observed at the initial point. If due to numerical errors, this number is nonpositive, we take $\sigma_f=10^{-10}$.

For conducting numerical experiments, we generated synthetic data, as in Section 6.4 in \cite{SunTran18}. We generate a matrix $[r_{t,i}]_{1\leq t\leq p,1\leq i\leq n}\in\R^{p\times n}$ with given price ratios as: $r_{t,i}=1 + N(0, 0.1)$ for any $i\in\{1,\ldots,n\}$ and $t\in\{1,\ldots,p\}$, which allows the closing price to vary by about 10\% between two consecutive periods. We used $(p,n) = (1000, 800), (1000, 1200)$, and $(1000, 1500)$ with 4 samples for each size. Hence, there are 12 data sets in total. For each data set, all methods were initialized from 10 randomly chosen vertices from the unit simplex.

Figure \ref{fig:Portfolio} collects results on the average performance of our methods and Table \ref{tbl:portfolio} reports numerical values obtained for each individual data set. \texttt{MBTFWGSC} and \texttt{ASFWGSC} outperforms all other methods considered in terms of time to reach a certain relative error, including \texttt{PN} and \texttt{PG}. Moreover, the advantage of  \texttt{ASFWGSC} becomes more significant as the relative error decreases. Interestingly the iteration complexity of  \texttt{MBTFWGSC} is almost identical to \texttt{FW-Line Search} while having superior time complexity. Additionally, despite its theoretical linear convergence, \texttt{FWLLOO} has inferior performance to both \texttt{MBTFWGSC} and \texttt{ASFWGSC}, indicating the strong convexity parameter $\sigma_f$ here is very small resulting in a large convergence coefficient.  

\begin{sidewaystable}
	\centering
	\begin{adjustbox}{width=1\textwidth}
	\begin{tabular}{llr|m{1cm}m{0.9cm}m{1.7cm}|m{1.2cm}m{0.9cm}m{1.7cm}|m{1.4cm}m{1.1cm}m{1.7cm}|m{1.2cm}m{1cm}m{1.7cm}|m{1.2cm}m{1cm}m{1.7cm}|m{0.7cm}m{0.9cm}m{1.7cm}|m{0.9cm}m{0.9cm}m{1.7cm}|m{0.8cm}m{1cm}m{1.7cm}|m{0.9cm}m{0.9cm}m{1.7cm}}
		\toprule
		\multicolumn{3}{c|}{Problem} & \multicolumn{3}{c|}{FW-Standard} & \multicolumn{3}{c|}{FWGSC} & \multicolumn{3}{c|}{LBTFWGSC} & \multicolumn{3}{c|}{MBTFWGSC} & \multicolumn{3}{c|}{FW-Line Search} & \multicolumn{3}{c|}{ASFWGSC} & \multicolumn{3}{c|}{FWLLOO} & \multicolumn{3}{c|}{PN} & \multicolumn{3}{c}{PG} \\
		Name &       n &    p &         iter &         time[s] &                error &          iter &         time[s] &                error &           iter &         time[s] &                error &         iter &         time[s] &                error &           iter &         time[s] &                error &        iter &         time[s] &                error &        iter &         time[s] &                error &        iter &           time[s] &                error &          iter &         time[s] &                error \\
		\midrule
		\multicolumn{30} {c}{Relative error = 1e-03}\\
		\midrule
		syn\_1000\_800\_10\_50    &     800 & 1000 &   48.0 (6.4) &  0.11 (0.02) &  9.29e-04 (5.31e-05) &   200.0 (1.2) &  0.46 (0.03) &  9.97e-04 (1.89e-06) &  295.3 (188.2) &  0.77 (0.49) &  9.90e-04 (1.35e-05) &    4.0 (0.0) &  {\bf 0.01 (0.00)} &  2.63e-04 (0.00e+00) &      4.0 (0.0) &  0.03 (0.00) &  2.81e-04 (6.05e-12) &   8.8 (0.4) &   0.02 (0.00) &  3.05e-04 (3.96e-05) &  34.5 (0.9) &  0.11 (0.01) &  8.21e-04 (1.03e-04) &  10.8 (0.6) &  21.25 (1.34) &  8.34e-04 (7.49e-05) &   90.4 (14.3) &  0.23 (0.04) &  5.05e-04 (2.50e-04) \\
		syn\_1000\_800\_10\_50\_1  &     800 & 1000 &   49.6 (6.9) &  0.11 (0.02) &  8.98e-04 (1.05e-04) &   261.6 (1.6) &  0.62 (0.01) &  9.98e-04 (1.01e-06) &  385.0 (244.8) &  1.02 (0.65) &  9.49e-04 (7.66e-05) &    7.0 (0.0) &  {\bf 0.02 (0.00)} &  3.78e-04 (0.00e+00) &      7.0 (0.0) &  0.05 (0.00) &  2.10e-04 (5.39e-12) &   9.5 (0.5) & {\bf 0.02 (0.00)} &  7.62e-04 (1.95e-04) &  43.8 (0.6) &  0.15 (0.01) &  8.48e-04 (8.55e-05) &  12.9 (1.3) &  26.68 (1.89) &  6.51e-04 (9.17e-05) &   85.0 (16.9) &  0.22 (0.04) &  4.45e-04 (2.74e-04) \\
		syn\_1000\_800\_10\_50\_2  &     800 & 1000 &   48.0 (6.4) &  0.11 (0.01) &  9.29e-04 (6.75e-05) &   266.2 (2.4) &  0.62 (0.02) &  9.98e-04 (1.23e-06) &  462.3 (224.8) &  1.19 (0.58) &  9.63e-04 (7.23e-05) &  29.8 (65.4) &  0.08 (0.17) &  8.13e-04 (6.20e-05) &    26.8 (59.4) &  0.19 (0.42) &  9.07e-04 (3.07e-05) &  10.6 (0.7) &  {\bf 0.03 (0.00)} &  7.00e-04 (1.71e-04) &  46.0 (1.5) &  0.16 (0.01) &  7.80e-04 (7.61e-05) &  13.4 (1.4) &  27.03 (3.01) &  7.17e-04 (1.28e-04) &   83.0 (16.7) &  0.21 (0.04) &  4.90e-04 (2.65e-04) \\
		syn\_1000\_800\_10\_50\_3  &     800 & 1000 &  42.8 (11.4) &  0.10 (0.03) &  8.69e-04 (1.47e-04) &  166.2 (50.5) &  0.38 (0.12) &  9.81e-04 (4.62e-05) &  246.9 (192.8) &  0.64 (0.50) &  9.34e-04 (8.92e-05) &    6.0 (0.0) & {\bf 0.02 (0.00)} &  3.45e-04 (2.17e-04) &      6.0 (0.0) &  0.04 (0.00) &  3.40e-04 (1.59e-04) &   8.1 (0.5) & {\bf 0.02 (0.00)} &  8.39e-04 (1.37e-04) &  28.5 (4.9) &  0.10 (0.02) &  7.90e-04 (1.89e-04) &  15.2 (2.8) &  30.89 (4.42) &  6.31e-04 (1.38e-04) &   84.8 (22.4) &  0.21 (0.05) &  6.51e-04 (3.05e-04) \\
		syn\_1000\_1200\_10\_50   &    1200 & 1000 &   47.2 (5.2) &  0.17 (0.02) &  9.46e-04 (3.05e-05) &    98.5 (0.8) &  0.36 (0.01) &  9.90e-04 (2.90e-06) &    68.6 (97.2) &  0.26 (0.37) &  6.69e-04 (2.15e-04) &    5.0 (0.0) &  {\bf 0.02 (0.00)} &  4.78e-04 (0.00e+00) &      5.0 (0.0) &  0.06 (0.01) &  4.80e-04 (6.86e-12) &   6.2 (0.4) &  0.03 (0.00) &  2.67e-04 (6.25e-15) &  18.9 (2.0) &  0.11 (0.01) &  7.63e-04 (1.86e-04) &  17.7 (1.3) &  58.15 (5.09) &  8.37e-04 (5.23e-05) &  121.1 (15.3) &  0.46 (0.08) &  9.49e-04 (3.62e-05) \\
		syn\_1000\_1200\_10\_50\_1 &    1200 & 1000 &   43.5 (7.1) &  0.16 (0.03) &  9.33e-04 (4.85e-05) &   197.2 (1.3) &  0.72 (0.04) &  9.97e-04 (1.67e-06) &  214.2 (188.3) &  0.85 (0.74) &  9.77e-04 (2.63e-05) &    4.0 (0.0) &  {\bf 0.02 (0.00)} &  1.02e-04 (1.36e-20) &      4.0 (0.0) &  0.04 (0.00) &  1.26e-04 (6.41e-12) &   7.5 (1.0) &  0.03 (0.01) &  5.71e-04 (1.74e-04) &  34.6 (0.9) &  0.19 (0.01) &  8.92e-04 (8.89e-05) &  15.1 (1.2) &  50.65 (2.35) &  7.37e-04 (7.81e-05) &  111.7 (25.0) &  0.44 (0.07) &  4.28e-04 (2.92e-04) \\
		syn\_1000\_1200\_10\_50\_2 &    1200 & 1000 &   43.4 (6.1) &  0.16 (0.02) &  9.26e-04 (7.07e-05) &   126.7 (1.2) &  0.46 (0.03) &  9.95e-04 (3.82e-06) &   78.7 (112.6) &  0.31 (0.44) &  5.87e-04 (2.68e-04) &    3.0 (0.0) &  {\bf 0.01 (0.00)} &  4.44e-05 (0.00e+00) &      3.0 (0.0) &  0.03 (0.00) &  6.32e-05 (9.51e-12) &   6.4 (0.7) &  0.03 (0.00) &  1.37e-04 (3.95e-05) &  23.1 (2.0) &  0.13 (0.01) &  7.12e-04 (2.00e-04) &  15.7 (1.8) &  49.83 (7.33) &  6.53e-04 (1.22e-04) &  123.3 (22.7) &  0.48 (0.10) &  6.74e-04 (2.39e-04) \\
		syn\_1000\_1200\_10\_50\_3 &    1200 & 1000 &   52.3 (7.6) &  0.19 (0.03) &  9.23e-04 (5.04e-05) &   242.6 (1.6) &  0.89 (0.04) &  9.96e-04 (3.06e-06) &  284.4 (230.7) &  1.13 (0.91) &  9.22e-04 (9.42e-05) &  25.1 (51.3) &  0.10 (0.21) &  4.24e-04 (1.92e-04) &    15.5 (25.5) &  0.22 (0.40) &  3.81e-04 (2.03e-04) &  10.0 (0.8) &  {\bf 0.04 (0.00)} &  7.40e-04 (1.21e-04) &  41.0 (2.0) &  0.23 (0.02) &  8.82e-04 (9.21e-05) &  17.3 (2.5) &  53.01 (8.38) &  7.71e-04 (7.70e-05) &  115.9 (22.2) &  0.44 (0.10) &  5.81e-04 (2.03e-04) \\
		syn\_1000\_1500\_10\_50   &    1500 & 1000 &   47.0 (7.1) &  0.21 (0.03) &  9.17e-04 (9.28e-05) &   218.8 (1.0) &  1.00 (0.02) &  9.97e-04 (2.25e-06) &  319.6 (206.7) &  1.56 (1.02) &  9.21e-04 (1.23e-04) &    5.0 (0.0) &  {\bf 0.03 (0.00)} &  2.03e-04 (1.38e-05) &      5.0 (0.0) &  0.07 (0.01) &  1.94e-04 (4.60e-12) &   8.2 (0.6) &  0.04 (0.00) &  3.86e-04 (2.12e-04) &  39.5 (2.6) &  0.28 (0.04) &  6.92e-04 (2.03e-04) &  15.8 (0.6) &  69.84 (2.33) &  7.70e-04 (7.42e-05) &  119.9 (19.7) &  0.58 (0.09) &  7.11e-04 (2.14e-04) \\
		syn\_1000\_1500\_10\_50\_1 &    1500 & 1000 &   47.0 (7.1) &  0.21 (0.03) &  9.17e-04 (9.28e-05) &   218.8 (1.0) &  1.01 (0.01) &  9.97e-04 (2.25e-06) &  319.6 (206.7) &  1.55 (1.00) &  9.21e-04 (1.23e-04) &    5.0 (0.0) &  {\bf 0.03 (0.00)} &  2.03e-04 (1.38e-05) &      5.0 (0.0) &  0.07 (0.00) &  1.94e-04 (4.60e-12) &   8.2 (0.6) &  0.04 (0.00) &  3.86e-04 (2.12e-04) &  39.5 (2.6) &  0.26 (0.02) &  6.92e-04 (2.03e-04) &  15.8 (0.6) &  70.06 (1.71) &  7.70e-04 (7.42e-05) &  119.9 (19.7) &  0.59 (0.10) &  7.11e-04 (2.14e-04) \\
		syn\_1000\_1500\_10\_50\_2 &    1500 & 1000 &   48.0 (3.5) &  0.22 (0.02) &  9.52e-04 (4.41e-05) &   248.1 (1.2) &  1.14 (0.01) &  9.98e-04 (1.11e-06) &  456.0 (150.2) &  2.23 (0.74) &  9.83e-04 (4.82e-05) &    6.0 (0.0) & {\bf  0.03 (0.00)} &  5.03e-04 (1.29e-05) &      6.0 (0.0) &  0.09 (0.01) &  5.12e-04 (2.19e-05) &   9.1 (0.3) &  0.05 (0.00) &  4.54e-04 (2.66e-04) &  42.8 (1.9) &  0.29 (0.02) &  8.66e-04 (9.92e-05) &  16.3 (1.7) &  68.44 (5.07) &  7.33e-04 (6.59e-05) &  112.3 (17.7) &  0.55 (0.09) &  3.42e-04 (2.26e-04) \\
		syn\_1000\_1500\_10\_50\_3 &    1500 & 1000 &   42.9 (6.4) &  0.20 (0.03) &  9.28e-04 (6.49e-05) &   198.8 (1.5) &  0.91 (0.02) &  9.97e-04 (1.75e-06) &  275.3 (175.7) &  1.35 (0.86) &  9.56e-04 (6.54e-05) &    4.0 (0.0) &  {\bf 0.02 (0.00)} &  1.88e-04 (3.06e-05) &      4.0 (0.0) &  0.06 (0.01) &  2.01e-04 (7.09e-12) &   8.3 (0.5) &  0.04 (0.00) &  3.96e-04 (3.03e-04) &  34.0 (1.8) &  0.22 (0.02) &  8.77e-04 (1.18e-04) &  17.6 (2.5) &  77.05 (7.43) &  7.49e-04 (6.85e-05) &  116.0 (23.3) &  0.56 (0.10) &  5.43e-04 (2.50e-04) \\
		\midrule
		\multicolumn{30} {c}{Relative error = 1e-05}\\
		\midrule
		syn\_1000\_800\_10\_50    &     800 & 1000 &   452.5 (59.3) &  1.03 (0.14) &  9.47e-06 (3.52e-07) &     18897.1 (2.3) &   44.78 (2.26) &  1.00e-05 (2.32e-10) &   30313.3 (19832.3) &    78.30 (51.38) &  9.33e-06 (1.03e-06) &  9.0 (0.0) &  \bf{ 0.02 (0.00)} &  6.54e-06 (8.47e-22) &        8.0 (0.0) &    0.06 (0.00) &  8.95e-06 (5.74e-12) &  12.2 (0.4) &  0.03 (0.00) &  7.45e-06 (1.77e-06) &   212.0 (7.5) &  0.66 (0.04) &  8.90e-06 (7.08e-07) &  *19.0 (3.7) &   35.96 (2.20) &  2.95e-05 (6.90e-05) &   99.5 (15.3) &  0.25 (0.05) &  7.69e-06 (1.36e-06) \\
		syn\_1000\_800\_10\_50\_1  &     800 & 1000 &   480.4 (73.6) &  1.13 (0.17) &  9.50e-06 (3.73e-07) &    25331.4 (20.3) &   60.64 (1.00) &  1.00e-05 (7.21e-11) &  *35008.2 (22901.9) &    92.15 (60.28) &  1.04e-05 (1.15e-06) &       35.0 (0.0) &   0.10 (0.00) &  9.74e-06 (0.00e+00) &       34.0 (0.0) &    0.25 (0.02) &  9.49e-06 (4.80e-12) &  14.2 (0.9) &  \bf{ 0.04 (0.00)} &  5.87e-06 (2.77e-06) &  288.8 (14.9) &  1.02 (0.05) &  9.07e-06 (1.09e-06) &   19.1 (1.3) &   41.09 (1.82) &  7.75e-06 (1.75e-06) &   90.9 (17.5) &  0.23 (0.04) &  7.87e-06 (1.82e-06) \\
		syn\_1000\_800\_10\_50\_2  &     800 & 1000 &   462.0 (75.8) &  1.06 (0.18) &  9.39e-06 (7.19e-07) &     25769.5 (3.6) &   60.67 (0.89) &  1.00e-05 (1.85e-10) &  *40007.6 (19986.8) &   101.75 (51.00) &  1.11e-05 (1.53e-06) &  2585.8 (7709.4) &  7.05 (21.02) &  7.97e-06 (6.78e-07) &  2579.9 (7703.7) &  18.71 (55.86) &  8.88e-06 (3.74e-07) &  16.9 (1.4) & \bf{ 0.04 (0.00)} &  6.26e-06 (2.25e-06) &  322.0 (17.8) &  1.15 (0.08) &  9.76e-06 (1.96e-07) &   20.4 (1.4) &   43.34 (3.37) &  5.93e-06 (1.06e-06) &   90.7 (15.4) &  0.23 (0.04) &  8.53e-06 (3.94e-07) \\
		syn\_1000\_800\_10\_50\_3  &     800 & 1000 &  405.9 (124.3) &  0.94 (0.29) &  8.68e-06 (2.61e-06) &  15578.2 (5180.1) &  36.64 (12.18) &  9.93e-06 (2.17e-07) &   25597.8 (20866.3) &    66.12 (53.91) &  9.12e-06 (1.32e-06) &       18.8 (8.4) &   0.05 (0.02) &  9.48e-06 (4.83e-09) &       19.6 (7.8) &    0.14 (0.06) &  5.98e-06 (1.24e-06) &  12.1 (0.5) &  \bf{ 0.03 (0.00)} &  2.99e-06 (1.33e-06) &  167.9 (32.4) &  0.58 (0.10) &  8.62e-06 (1.06e-06) &   21.6 (2.9) &   46.04 (4.63) &  7.32e-06 (1.78e-06) &   98.6 (26.0) &  0.25 (0.06) &  8.60e-06 (8.25e-07) \\
		syn\_1000\_1200\_10\_50   &    1200 & 1000 &   459.0 (46.4) &  1.66 (0.19) &  9.69e-06 (1.49e-07) &      9598.4 (3.7) &   35.56 (0.66) &  1.00e-05 (7.30e-10) &    6806.6 (10380.5) &    26.77 (40.83) &  6.22e-06 (2.48e-06) &       19.0 (0.0) &   0.08 (0.00) &  9.83e-06 (1.69e-21) &       18.0 (0.0) &    0.21 (0.02) &  9.02e-06 (5.37e-12) &  10.2 (0.4) &  \bf{ 0.04 (0.00)} &  9.68e-06 (5.52e-15) &    88.1 (9.1) &  0.48 (0.06) &  8.27e-06 (1.05e-06) &   27.6 (1.5) &   94.69 (8.22) &  7.36e-06 (8.66e-07) &  153.6 (16.3) &  0.58 (0.09) &  9.36e-06 (4.68e-07) \\
		syn\_1000\_1200\_10\_50\_1 &    1200 & 1000 &   425.4 (68.2) &  1.57 (0.30) &  9.69e-06 (1.89e-07) &     18653.2 (1.7) &   69.59 (2.96) &  1.00e-05 (2.16e-10) &   25043.1 (20431.4) &    98.42 (80.30) &  8.60e-06 (1.72e-06) &        8.0 (0.0) &   \bf{0.03 (0.00)} &  9.32e-06 (0.00e+00) &        8.0 (0.0) &    0.09 (0.00) &  8.49e-06 (5.78e-12) &  10.2 (1.2) & {0.04 (0.01)} &  4.93e-06 (2.27e-06) &  218.3 (10.6) &  1.15 (0.07) &  9.30e-06 (4.20e-07) &   25.9 (1.2) &   89.03 (3.76) &  7.79e-06 (6.18e-07) &  120.3 (24.4) &  0.48 (0.07) &  8.22e-06 (1.30e-06) \\
		syn\_1000\_1200\_10\_50\_2 &    1200 & 1000 &   432.1 (68.4) &  1.59 (0.23) &  9.68e-06 (1.64e-07) &     11766.7 (1.6) &   43.82 (2.03) &  1.00e-05 (3.25e-10) &    7985.2 (12183.8) &    31.21 (47.63) &  8.17e-06 (1.20e-06) &        4.0 (0.0) &  \bf{ 0.02 (0.00)} &  5.47e-06 (8.47e-22) &        4.0 (0.0) &    0.04 (0.00) &  7.40e-06 (9.05e-12) &   7.8 (0.9) &  0.03 (0.00) &  3.41e-06 (2.50e-06) &   114.1 (5.8) &  0.60 (0.05) &  8.72e-06 (7.02e-07) &   25.9 (1.8) &   84.54 (9.72) &  7.87e-06 (8.94e-07) &  140.8 (22.0) &  0.55 (0.11) &  8.97e-06 (6.23e-07) \\
		syn\_1000\_1200\_10\_50\_3 &    1200 & 1000 &   507.9 (83.5) &  1.85 (0.31) &  9.53e-06 (4.32e-07) &     23710.5 (3.6) &   87.15 (3.89) &  1.00e-05 (1.84e-10) &  *30011.4 (24482.2) &   115.69 (95.10) &  9.39e-06 (1.46e-06) &  2383.9 (7082.7) &  9.94 (29.54) &  9.75e-06 (8.47e-08) &  2372.8 (7055.4) &  30.31 (90.14) &  9.72e-06 (9.34e-08) &  16.7 (1.2) &  \bf{0.07 (0.01)} &  5.99e-06 (1.45e-06) &  280.1 (18.5) &  1.49 (0.10) &  9.42e-06 (3.96e-07) &   25.8 (2.5) &  85.28 (10.08) &  7.99e-06 (1.63e-06) &  126.7 (22.4) &  0.48 (0.10) &  7.74e-06 (8.24e-07) \\
		syn\_1000\_1500\_10\_50   &    1500 & 1000 &   460.6 (56.9) &  2.12 (0.27) &  9.60e-06 (2.23e-07) &     21226.1 (1.4) &   98.69 (0.76) &  1.00e-05 (1.52e-10) &   34823.7 (22776.3) &  170.37 (111.45) &  9.75e-06 (6.04e-07) &       13.0 (0.0) &   0.07 (0.00) &  8.71e-06 (7.30e-08) &       12.0 (0.0) &    0.18 (0.02) &  9.64e-06 (3.91e-12) &  12.4 (1.0) &  \bf{0.06 (0.01)} &  5.60e-06 (2.21e-06) &  266.2 (26.1) &  1.71 (0.17) &  9.37e-06 (4.00e-07) &   30.0 (1.1) &  130.79 (8.05) &  7.70e-06 (4.81e-07) &  133.6 (19.3) &  0.64 (0.09) &  8.22e-06 (1.04e-06) \\
		syn\_1000\_1500\_10\_50\_1 &    1500 & 1000 &   460.6 (56.9) &  2.10 (0.26) &  9.60e-06 (2.23e-07) &     21226.1 (1.4) &   98.46 (1.26) &  1.00e-05 (1.52e-10) &   34823.7 (22776.3) &  169.87 (111.13) &  9.75e-06 (6.04e-07) &       13.0 (0.0) &   0.07 (0.00) &  8.71e-06 (7.30e-08) &       12.0 (0.0) &    0.17 (0.01) &  9.64e-06 (3.91e-12) &  12.4 (1.0) &  \bf{0.06 (0.01)} &  5.60e-06 (2.21e-06) &  266.2 (26.1) &  1.70 (0.15) &  9.37e-06 (4.00e-07) &   30.0 (1.1) &  131.56 (7.27) &  7.70e-06 (4.81e-07) &  133.6 (19.3) &  0.65 (0.09) &  8.22e-06 (1.04e-06) \\
		syn\_1000\_1500\_10\_50\_2 &    1500 & 1000 &   469.1 (61.2) &  2.14 (0.29) &  9.52e-06 (2.95e-07) &     24412.3 (1.9) &  113.32 (0.90) &  1.00e-05 (9.46e-11) &  *45005.1 (14987.7) &   219.94 (73.27) &  1.07e-05 (5.64e-07) &       22.2 (1.0) &   0.12 (0.01) &  9.50e-06 (5.29e-08) &       20.8 (1.5) &    0.30 (0.04) &  9.68e-06 (1.63e-07) &  14.4 (0.9) &  \bf{0.07 (0.01)} &  6.35e-06 (2.13e-06) &  287.3 (19.1) &  1.86 (0.11) &  9.06e-06 (6.59e-07) &   27.5 (1.6) &  119.17 (5.27) &  8.00e-06 (7.97e-07) &  120.9 (16.9) &  0.59 (0.08) &  7.98e-06 (1.56e-06) \\
		syn\_1000\_1500\_10\_50\_3 &    1500 & 1000 &   422.6 (68.9) &  1.93 (0.31) &  9.50e-06 (4.06e-07) &     18982.5 (2.1) &   87.99 (1.08) &  1.00e-05 (1.63e-10) &   28388.0 (18573.2) &   138.89 (90.88) &  8.60e-06 (2.13e-06) &        6.0 (0.0) &  \bf{ 0.03 (0.00)} &  6.71e-06 (1.35e-06) &        6.0 (0.0) &    0.09 (0.01) &  6.38e-06 (6.48e-12) &  12.1 (0.7) &  0.06 (0.00) &  5.54e-06 (3.05e-06) &  225.5 (15.9) &  1.44 (0.14) &  8.56e-06 (9.10e-07) &   27.7 (2.5) &  127.59 (7.08) &  8.37e-06 (7.37e-07) &  128.9 (25.4) &  0.62 (0.11) &  8.73e-06 (7.09e-07) \\
		\bottomrule
	\end{tabular}
	\end{adjustbox}
\caption{Results for portfolio selection problem \eqref{eq:portfolio}. Mean (standard deviation) across starting point realizations of number of iterations and CPU time in seconds to achieve a certain relative error or best relative error achieved by methods, as well as the relative error achieved at that iteration. We highlight in bold the best performance among all competitors.\\  * Maximum iteration number was reached without obtaining the desired relative error for at least one of the starting points.}
\label{tbl:portfolio}
\end{sidewaystable}

\subsection{Distance weighted discrimination}
\label{sec:DWD}
In the context of binary classification, an interesting modification of the classical support-vector machine is the distance weighted discrimination (DWD) problem, introduced in \cite{MarTodAhn07}. In that problem, the classification loss attains the form
\begin{align*}
f(x)=\frac{1}{n}\sum_{i=1}^{p}(a_{i}^{\top}w+\mu y_{i}+\xi_{i})^{-q}+c^{\top}\xi,
\end{align*}
over the convex compact set
\begin{align*}
\scrX=\{x=(w,\mu,\xi)\vert\;\norm{w}^{2}\leq 1,\mu\in[-u,u],\norm{\xi}^{2}\leq R,\xi\in\R^{p}_{+}\},
\end{align*}
where $R>0$ is a hyperparameter that has to be learned via cross-validation. 
\begin{figure}[b]
    \centering
    \begin{subfigure}[b]{0.32\textwidth}
        \includegraphics[width=\textwidth]{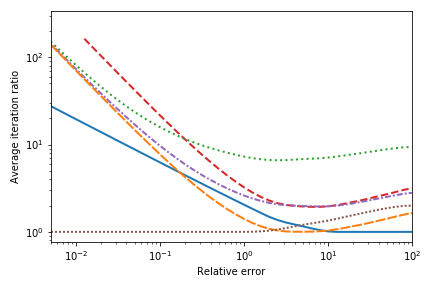}
        \caption{Average iteration ratio $\tilde{\rho}(\eps)$.}
        \label{fig:DWDIteration}
    \end{subfigure}
    ~ 
    \begin{subfigure}[b]{0.32\textwidth}
        \includegraphics[width=\textwidth]{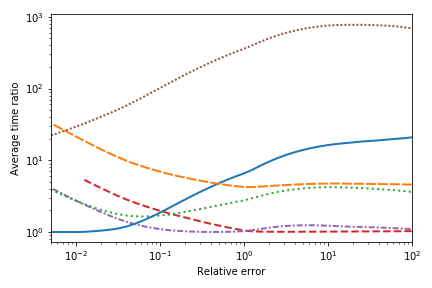}
        \caption{Average time ratio $\hat{\rho}(\eps)$.}
        \label{fig:DWDTime}
    \end{subfigure}
    ~ 
    \begin{subfigure}[b]{0.32\textwidth}
        \includegraphics[width=\textwidth]{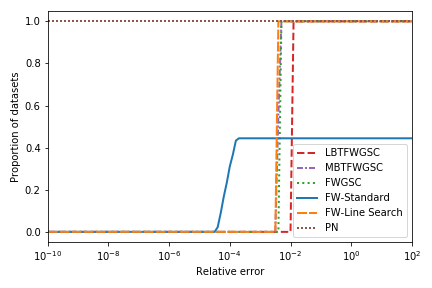}
        \caption{Average success ratio $\rho(\eps)$.}
        \label{fig:DWDData}
    \end{subfigure}
    \caption{Performance Profile for the DWD problem averaged over binary classification problems.}\label{fig:DWD}
\end{figure}
The parameter $q\geq 1$ calibrates the statistical loss function, and $(a_{i},y_{i})\in\R^{d}\times\{-1,1\}, (i=1,2,\ldots,p)$ is the observed sample. The decision variable is decoded as $x=(w,\mu,\xi)\in\Rn$, where $n=d+1+p$, corresponding to a normal vector $w\in\R^{d}$, an intercept $\mu\in\R$ and a slack variable $\xi\in\R^{p}$. Since $\varphi(t)=t^{-q},q\geq 1$ is generalized self-concordant with parameters $M_{\varphi}=\frac{q+2}{\sqrt[q+2]{q(q+1)}}$ and $\nu=\frac{2(q+3)}{q+2}\in(2,3)$ (cf. Table \ref{tab:GSC}) we get a \ac{GSC} minimization problem over the compact set $\scrX$, with parameters $\nu=\frac{2(q+3)}{q+2}$ and $M_{f}= \frac{q+2}{\sqrt[q+2]{q(q+1)}}n^{1/(q+2)}\max\left\{\norm{(a_{i}^{\top},y_{i},e_{i}^{\top})^{\top}}_{2}^{q/(q+2)}:1\leq i\leq n\right\}$. The special case $q=1$ corresponds to the loss function of \cite{MarTodAhn07}, who solved this problem via a second-order cone reformulation. We test our algorithms using $q=2$, and the observations $a_i$ and $y_i$ are based on data sets a1a-a9a from the LIBSVM library \cite{LIBSVM}, where $a_i$ are normalized. For each data set, the methods were ran 10 times, one for each randomly generated starting point of the structure $(0,0,\xi)$  where $\xi$ is sampled uniformly from its domain.
The results presented are averages across these realizations. We set $c_i=1$ for all $i=1,\ldots,p$, $u=5$, and $R=10$. 

\texttt{PG} cannot be applied to this problem, since $2<\nu<3$. We also do not apply \texttt{ASFWGSC}, since $\scrX$ is not a polyhedral set. Figure \ref{fig:DWD} collects results on the average performance of our methods and Table \ref{tbl:dwd} shows the results obtained for each individual data set.  Here we see that for all data sets and all starting points all FW based methods reach a minimal relative error  $10^{-3}$, with the exception of \texttt{standard-FW} which reaches a relative error of $10^{-4}$ for the smaller instances a1a-a4a but obtains a relative error higher than $10^2$ for the larger instances a5a-a9a. The poor performance of \texttt{FW-Standard} on the largest instances is due to the monotonically decreasing step sizes and the fact that it requires very small step size in order to keep the iterates in the domain in the first iteration. From the other methods, \texttt{MBTFWGSC} and \texttt{FWGSC} perform the best, with \texttt{MBTFWGSC} having a slight advantage for lower accuracy due to the use of a smaller $M_f$ values.
\begin{sidewaystable}
	\centering
	\begin{adjustbox}{width=1\textwidth}
		\begin{tabular}{llr | m{1.3cm}m{0.9cm}m{1.7cm} | m{1.2cm}m{0.9cm}m{1.7cm} | m{1.3cm}m{1cm}m{1.7cm} | m{1.2cm}m{1cm}m{1.7cm} | m{1.2cm}m{1.1cm}m{1.7cm} | m{0.9cm}m{1.2cm}m{1.7cm}}
	\toprule
	{Name} & \multicolumn{2}{c|}{Problem} & \multicolumn{3}{c|}{FW-Standard} & \multicolumn{3}{c|}{FWGSC} & \multicolumn{3}{c|}{LBTFWGSC} & \multicolumn{3}{c|}{MBTFWGSC} & \multicolumn{3}{c|}{FW-Line Search} & \multicolumn{3}{c}{PN} \\
	Name &       d &       p &        iter &            time[s]&                error &           iter &           time[s]&                error &       iter &             time[s] &                error &            iter &           time[s]&                error &           iter &             time[s]&                error &          iter &              time[s]&                error \\
		\midrule
	\multicolumn{18} {c}{Relative error = 1e-01}\\
	\midrule
a1a &     128 &  1605 &   1040.7 (64.7) &    {\bf 1.60 (0.59)} &  9.98e-02 (1.18e-04) &  3576.2 (9.2) &   3.07 (0.01) &  1.00e-01 (1.14e-05) &   5803.1 (6.2) &   4.59 (0.02) &  1.00e-01 (4.25e-06) &   2479.3 (9.6) &    2.54 (0.22) &  1.00e-01 (1.33e-05) &   2070.9 (3.8) &   16.76 (0.04) &  1.00e-01 (1.18e-05) &   186.9 (8.6) &    175.90 (10.11) &  8.23e-02 (1.39e-02) \\
a2a &     128 &  2265 &   1269.4 (66.0) &    4.03 (1.74) &  9.99e-02 (6.32e-05) &  3744.8 (8.3) &   4.24 (0.03) &  1.00e-01 (1.11e-05) &   5822.8 (8.4) &   5.82 (0.09) &  1.00e-01 (2.73e-06) &   2522.3 (7.2) &   {\bf 3.21 (0.03)} &  1.00e-01 (1.36e-05) &   2081.2 (3.7) &   21.49 (0.13) &  1.00e-01 (1.80e-05) &  206.6 (11.9) &    240.96 (17.34) &  8.27e-02 (1.21e-02) \\
a3a &     128 &  3185 &   1505.6 (80.6) &    7.06 (2.37) &  9.99e-02 (8.28e-05) &  3939.1 (6.3) &   5.73 (0.04) &  1.00e-01 (1.46e-05) &   5896.3 (6.8) &   7.61 (0.13) &  1.00e-01 (1.50e-06) &   2579.3 (5.0) &   {\bf 4.14 (0.04)} &  1.00e-01 (1.22e-05) &   2103.9 (3.5) &   27.63 (0.14) &  1.00e-01 (1.66e-05) &  236.4 (13.9) &    345.42 (17.70) &  8.81e-02 (1.05e-02) \\
a4a &     128 &  4781 &   1855.6 (91.5) &  19.98 (11.21) &  9.99e-02 (5.79e-05) &  4187.1 (6.9) &   8.74 (0.11) &  1.00e-01 (1.31e-05) &   5923.0 (9.1) &  10.73 (0.18) &  1.00e-01 (4.56e-06) &   2633.4 (5.9) &   {\bf 5.90 (0.10)} &  1.00e-01 (1.68e-05) &   2119.7 (4.2) &   39.43 (0.37) &  1.00e-01 (1.04e-05) &  270.6 (13.4) &    529.22 (25.68) &  7.99e-02 (9.10e-03) \\
a5a &     128 &  6414 &  *50001.0 (0.0) &   79.32 (2.06) &  3.26e+07 (6.74e+07) &  4367.9 (9.4) &  16.07 (1.02) &  1.00e-01 (1.13e-05) &  6026.9 (11.1) &  14.09 (0.08) &  1.00e-01 (5.61e-06) &  2670.9 (11.0) &   {\bf 7.75 (0.04)} &  1.00e-01 (9.60e-06) &   2127.4 (5.8) &   66.93 (2.13) &  1.00e-01 (1.26e-05) &  284.9 (24.5) &    972.85 (75.90) &  7.39e-02 (1.42e-02) \\
a6a &     128 & 11220 &  *50001.0 (0.0) &  152.16 (3.84) &  3.56e+07 (6.65e+07) &  4799.5 (7.4) &  37.78 (2.44) &  1.00e-01 (1.47e-05) &   6094.4 (8.9) &  42.70 (3.35) &  1.00e-01 (1.28e-06) &   2785.5 (4.2) &  {\bf 23.51 (1.84)} &  1.00e-01 (1.10e-05) &   2170.0 (5.6) &  134.99 (3.66) &  1.00e-01 (1.52e-05) &  330.3 (34.6) &  2220.70 (184.15) &  8.25e-02 (1.15e-02) \\
a7a &     128 & 16100 &  *50001.0 (0.0) &  190.41 (7.69) &  3.66e+07 (6.63e+07) &  5105.2 (7.0) &  56.17 (2.99) &  1.00e-01 (1.34e-05) &  6118.9 (10.3) &  60.29 (2.40) &  1.00e-01 (2.07e-06) &   2852.6 (5.9) &  {\bf 34.10 (1.79)} &  1.00e-01 (1.35e-05) &   2195.0 (4.0) &  159.07 (3.54) &  1.00e-01 (1.45e-05) &  364.2 (42.5) &  3464.89 (282.62) &  8.51e-02 (9.32e-03) \\
a8a &     128 & 22696 &  *50001.0 (0.0) &  293.74 (7.57) &  7.68e+07 (1.23e+08) &  5437.6 (5.4) &  82.52 (3.34) &  1.00e-01 (8.24e-06) &   6152.5 (8.4) &  83.56 (3.76) &  1.00e-01 (5.44e-06) &  2944.2 (10.8) &  {\bf 49.00 (2.00)} &  1.00e-01 (1.49e-05) &   2231.9 (4.4) &  269.81 (5.56) &  1.00e-01 (1.31e-05) &  386.3 (50.3) &  5276.16 (407.81) &  8.05e-02 (1.04e-02) \\
a9a &     128 & 32561 &  *50001.0 (0.0) &  292.82 (3.70) &  1.38e+08 (2.22e+08) &  5798.7 (4.1) &  82.73 (1.12) &  1.00e-01 (1.25e-05) &   6200.2 (6.7) &  83.75 (3.00) &  1.00e-01 (3.70e-06) &   3030.2 (8.2) &  {\bf 47.45 (1.70)} &  1.00e-01 (1.44e-05) &   2262.8 (3.4) &  247.10 (1.86) &  1.00e-01 (1.48e-05) &  417.2 (49.8) &  4786.52 (394.76) &  7.56e-02 (1.64e-02) \\
		\midrule
	\multicolumn{18} {c}{Relative error = 1e-02}\\
	\midrule
a1a &     128 &  1605 &  3294.5 (203.1) &    {\bf 2.56 (0.63)} &  9.99e-03 (3.51e-06) &  21677.4 (9.2) &   18.60 (0.06) &  1.00e-02 (1.59e-07) &  *50001.0 (0.0) &    39.47 (0.15) &  1.13e-02 (1.42e-06) &   20195.1 (9.6) &   20.37 (1.08) &  1.00e-02 (9.78e-08) &  19594.0 (3.7) &    159.40 (0.40) &  1.00e-02 (1.33e-07) &   189.6 (8.6) &    177.89 (10.16) &  3.40e-03 (2.03e-03) \\
a2a &     128 &  2265 &  4018.8 (208.7) &    {\bf 5.48 (1.80)} &  9.99e-03 (2.94e-06) &  21837.4 (8.4) &   24.84 (0.18) &  1.00e-02 (1.68e-07) &  *50001.0 (0.0) &    50.57 (0.38) &  1.13e-02 (1.89e-06) &   20197.9 (7.2) &   25.78 (0.19) &  1.00e-02 (1.72e-07) &  19548.5 (3.8) &    202.15 (1.21) &  1.00e-02 (1.34e-07) &  209.4 (11.9) &    243.64 (17.38) &  4.42e-03 (2.52e-03) \\
a3a &     128 &  3185 &  4763.0 (254.0) &   {\bf 9.19 (2.46)} &  1.00e-02 (1.91e-06) &  22164.7 (6.4) &   32.36 (0.23) &  1.00e-02 (1.37e-07) &  *50001.0 (0.0) &    64.22 (0.67) &  1.14e-02 (1.53e-06) &   20354.1 (4.8) &   32.72 (0.27) &  1.00e-02 (1.40e-07) &  19653.1 (3.4) &    262.66 (2.19) &  1.00e-02 (1.45e-07) &  239.4 (13.9) &    349.12 (17.70) &  5.38e-03 (2.14e-03) \\
a4a &     128 &  4781 &  5870.0 (290.0) &  {\bf 23.70 (11.31)} &  1.00e-02 (1.86e-06) &  22420.5 (6.8) &   47.08 (0.55) &  1.00e-02 (1.26e-07) &  *50001.0 (0.0) &    90.70 (1.65) &  1.14e-02 (2.04e-06) &   20371.3 (5.9) &   45.80 (0.73) &  1.00e-02 (1.71e-07) &  19609.5 (4.3) &    359.21 (1.68) &  1.00e-02 (1.40e-07) &  273.6 (13.4) &    534.50 (25.68) &  4.81e-03 (1.58e-03) \\
a5a &     128 &  6414 &  *50001.0 (0.0) &   79.32 (2.06) &  3.26e+07 (6.74e+07) &  22556.9 (9.4) &   82.36 (2.86) &  1.00e-02 (1.26e-07) &  *50001.0 (0.0) &   117.38 (0.52) &  1.15e-02 (2.58e-06) &  20330.3 (10.6) &  {\bf 58.95 (0.22)} &  1.00e-02 (1.44e-07) &  19521.2 (5.7) &    613.87 (6.03) &  1.00e-02 (1.41e-07) &  287.6 (25.0) &    980.10 (76.83) &  5.71e-03 (2.21e-03) \\
a6a &     128 & 11220 &  *50001.0 (0.0) &  152.16 (3.84) &  3.56e+07 (6.65e+07) &  23203.8 (7.3) &  180.84 (4.84) &  1.00e-02 (1.33e-07) &  *50001.0 (0.0) &   342.18 (8.76) &  1.15e-02 (2.06e-06) &   20584.6 (4.1) &  {\bf 173.73 (4.92)} &  1.00e-02 (7.18e-08) &  19664.8 (5.8) &  1236.31 (17.44) &  1.00e-02 (1.66e-07) &  333.4 (34.6) &  2237.24 (182.66) &  4.85e-03 (2.01e-03) \\
a7a &     128 & 16100 &  *50001.0 (0.0) &  190.41 (7.69) &  3.66e+07 (6.63e+07) &  23591.6 (6.8) &  255.31 (8.34) &  1.00e-02 (1.34e-07) &  *50001.0 (0.0) &   476.41 (9.49) &  1.15e-02 (2.38e-06) &   20679.6 (5.9) & {\bf 244.99 (7.16)} &  1.00e-02 (1.36e-07) &  19689.6 (3.8) &  1619.53 (20.03) &  1.00e-02 (1.57e-07) &  367.5 (42.7) &  3488.93 (282.36) &  4.54e-03 (2.12e-03) \\
a8a &     128 & 22696 &  *50001.0 (0.0) &  293.74 (7.57) &  7.68e+07 (1.23e+08) &  24132.5 (5.6) &  375.03 (6.98) &  1.00e-02 (1.41e-07) &  *50001.0 (0.0) &  673.87 (12.50) &  1.15e-02 (1.95e-06) &  20922.2 (10.5) & {\bf 344.96 (6.52)} &  1.00e-02 (1.53e-07) &  19847.3 (4.3) &  2396.21 (14.76) &  1.00e-02 (1.42e-07) &  389.2 (50.9) &  5303.81 (408.01) &  5.00e-03 (1.81e-03) \\
a9a &     128 & 32561 &  *50001.0 (0.0) &  292.82 (3.70) &  1.38e+08 (2.22e+08) &  24647.0 (4.0) &  352.49 (4.25) &  1.00e-02 (1.13e-07) &  *50001.0 (0.0) &  655.19 (10.25) &  1.15e-02 (1.53e-06) &   21097.3 (8.3) & {\bf 325.08 (4.07)} &  1.00e-02 (1.27e-07) &  19934.5 (3.5) &  2056.31 (14.62) &  1.00e-02 (1.18e-07) &  420.2 (50.5) &  4815.02 (399.30) &  4.16e-03 (2.55e-03) \\
	\bottomrule
\end{tabular}
	\end{adjustbox}
\caption{Results for distance weighted discrimination (DWD) problem. Mean (standard deviation) across starting point realizations of number of iterations and CPU time in seconds to achieve a certain relative error or best relative error achieved by method after 50,000 iterations, as well as the relative error achieved at that iteration.\\ * Maximum iteration number was reached without obtaining the desired relative error for at least one of the starting points.}
\label{tbl:dwd}
\end{sidewaystable}

\subsection{Inverse covariance estimation}
\label{sec:Covariance}
\begin{figure}[b]
    \centering
    \begin{subfigure}[b]{0.32\textwidth}
        \includegraphics[width=\textwidth]{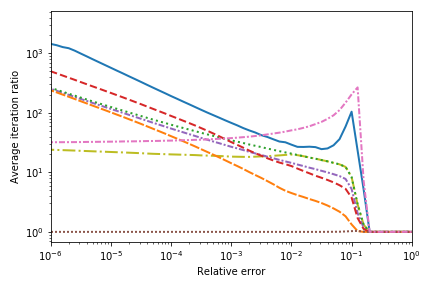}
        \caption{Average iteration ratio $\tilde{\rho}(\eps)$.}
        \label{fig:CovIteration}
    \end{subfigure}
    ~ 
    \begin{subfigure}[b]{0.32\textwidth}
        \includegraphics[width=\textwidth]{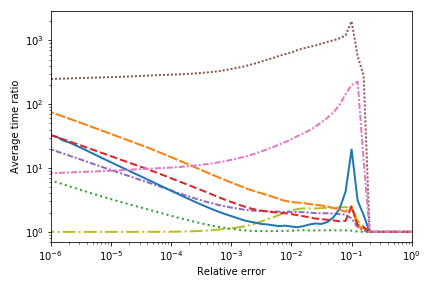}
        \caption{Average time ratio $\hat{\rho}(\eps)$.}
        \label{fig:CovTime}
    \end{subfigure}
    ~ 
    \begin{subfigure}[b]{0.32\textwidth}
        \includegraphics[width=\textwidth]{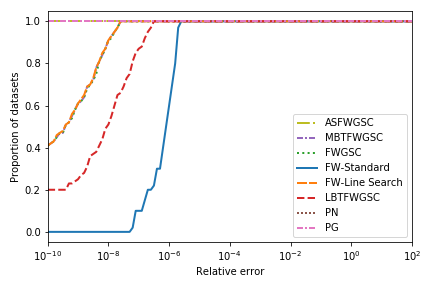}
        \caption{Average success ratio $\rho(\eps)$.}
        \label{fig:CovData}
    \end{subfigure}
    \caption{Performance Profile for Covariance estimation problem \eqref{eq:covariance} averaged on 10 data sets.}\label{fig:Covariance}
\end{figure}
Undirected graphical models offer a way to describe and explain the relationships among a set of variables, a central element of multivariate data analysis. The principle of parsimony dictates that we should select the simplest graphical model that adequately explains the data. The typical approach to tackle this problem is the following: Given a data set, we solve a maximum likelihood problem with an added low-rank penalty to make the resulting graph as sparse as possible. We consider learning a Gaussian graphical random field of $p$ nodes/variables from a data set $\{\phi_{1},\ldots,\phi_{N}\}$. Each random vector $\phi_{j}$ is an iid realization from a $p$-dimensional Gaussian distribution with mean $\mu$ and covariance matrix $\Sigma$. Let $\Theta=\Sigma^{-1}$ be the precision matrix. To satisfy conditional dependencies between the random variables, $\Theta$ must have zero in $\Theta_{ij}$ if $i$ and $j$ are not connected in the underlying graphical model. To learn the graphical model via an $\ell_{1}$-regularization framework in its constrained formulation, we minimize the loss function 
\begin{equation}\label{eq:covariance}
f(x)=-\log\det(\mat(x))+\tr(\hat{\Sigma}\mat(x))
\end{equation}
over set of symmetric matrices with $\ell_{1}$-ball restriction, that is $\scrX=\{x\in\Rn\vert\; \norm{x}_{1}\leq R ,\mat(x)\in\Mat^{n}\}$ where $R=\lceil \sqrt{p}\rceil$. The decision variables are vectors $x\in\Rn$ for $n=p^{2}$, so that $\mat(x)$ represents the $p\times p$ matrix constructed from the $p^{2}$-dimensional vector $x$. It can be seen that $f$ is standard self-concordant with domain $\Mat^{n}_{++}$. Hence, $M_{f}=2$ and $\nu=3$. One can see that the gradient $\nabla f(x)=\hat{\Sigma}-\mat(x)^{-1}$ and Hessian $\nabla^{2} f(x)=\mat(x)^{-1}\otimes\mat(x)^{-1}$. Since $\mat(x)$ is positive definite, we can compute the inverse via a Cholesky decomposition, which in the worst case needs $O(p^{3})$ arithmetic steps. To compute the search direction, we have to solve the LP 

\begin{align*}
s(x)\in \argmin_{s\in\scrX}\inner{\hat{\Sigma}-\mat(x)^{-1},\mat(s)},
\end{align*}
where $\inner{A,B}=\tr(AB)$ for $A,B\in\Mat^{n}$. This \acl{LMO} requires to identify the minimal elements of the matrix $\hat{\Sigma}-\mat(x)^{-1}$. Moreover, for the backtracking procedures as well as line search, we also need to construct a domain oracle. This requires to find the maximal step size $t>0$ for which $x+t (s(x)-x)\succeq 0$, which is equivalent to finding the maximal $t\in(0,1]$ such that  $\frac{1}{t} \mat(x)\succ \mat(x)-\mat(s(x))$ or $\frac{1}{t} > \lambda_{\max}(I-\mat(x)^{-1/2}\mat(s(x))\mat(x)^{-1/2})$. Note that this step oracle is not needed when using the theoretical step size in \texttt{FWGSC} and \texttt{ASFWGSC}.
We test our method on synthetically generated data sets. We generated the data by first creating the matrix $\hat{\Sigma}$ randomly, by generating a random orthonormal basis or $\R^p$, $B=\{v_{1},\ldots,v_{p}\}$, and then set
$$\hat{\Sigma}=\sum_{i=1}^{p}\sigma_{i}v_{i} v_{i}^{\top},$$
where $\sigma_{i}$ are independently and uniformly distributed between 0.5 and 1. We generated 10 such data sets, for $p$ ranging between 50 and 300. For each data set, the methods were ran for 10 randomly generated starting points. Each starting point has been chosen as a diagonal matrix where the diagonal was randomly chosen from the $R$-simplex. Figure \ref{fig:Covariance} collects results on the average performance of our methods and Table \ref{tbl:cov_estimation} shows the results obtained for each individual data set. We observe that \texttt{ASFWGSC} has the lowest time of obtaining any relative error below $10^{-3}$. Moreover, though \texttt{PG} has a lower iteration complexity in some instances, the higher computational cost of projection vs. linear oracle computations, makes it significantly inferior to \texttt{ASFWGSC}. 

\begin{sidewaystable}
	\centering
	\begin{adjustbox}{width=1.\textwidth}
	\begin{tabular}{llr|m{1.3cm}m{1.2cm}m{1.7cm}|m{1.2cm}m{1.1cm}m{1.7cm}|m{1.2cm}m{1.2cm}m{1.7cm}|m{1.3cm}m{1.2cm}m{1.7cm}|m{1.2cm}m{1.2cm}m{1.7cm}|m{0.9cm}m{0.8cm}m{1.7cm}|m{.7cm}m{1.2cm}m{1.7cm}|m{1.2cm}m{1.4cm}m{1.7cm}}
		\toprule
		 \multicolumn{3}{c}{Problem} & \multicolumn{3}{|c}{FW-Standard} & \multicolumn{3}{|c}{FWGSC} & \multicolumn{3}{|c}{LBTFWGSC} & \multicolumn{3}{|c}{MBTFWGSC} & \multicolumn{3}{|c}{FW-Line Search} & \multicolumn{3}{|c}{ASFWGSC} & \multicolumn{3}{|c}{PN} & \multicolumn{3}{|c}{PG} \\
		{} &       n &   p &           iter &       time[s] &                error &          iter &       time[s] &                error &          iter &       time[s] &                error &          iter &        time[s] &                error &           iter &        time[s] &                error &          iter &       time[s] &                error &        iter &           time[s] &                error &         iter &        time[s] &                error \\
		\midrule
			\multicolumn{20} {c}{Relative error = 1e-04}\\
		\midrule
		cov\_50  &    2500 &  50 &  1370.7 (20.6) &     0.66 (0.12) &  9.93e-05 (6.86e-07) &    429.7 (84.3) &   0.29 (0.07) &  9.96e-05 (2.99e-07) &   457.4 (109.5) &     0.85 (0.22) &  9.98e-05 (1.97e-07) &    370.4 (79.2) &     0.79 (0.18) &  9.96e-05 (2.72e-07) &    276.4 (72.3) &     2.05 (0.71) &  9.96e-05 (2.40e-07) &  137.5 (11.2) &  {\bf 0.20 (0.03)} &  9.25e-05 (2.90e-06) &  15.2 (2.0) &     80.19 (13.33) &  3.37e-05 (2.95e-05) &     120.8 (87.4) &       0.96 (0.59) &  2.21e-05 (1.87e-05) \\
		cov\_80  &    6400 &  80 &  2100.6 (11.1) &     2.46 (0.39) &  9.96e-05 (2.03e-07) &   665.6 (129.0) &   0.97 (0.22) &  9.98e-05 (1.25e-07) &   719.1 (162.6) &     3.27 (0.82) &  9.98e-05 (9.62e-08) &   575.2 (118.7) &     2.75 (0.60) &  9.96e-05 (1.68e-07) &   429.9 (103.8) &     6.60 (1.63) &  9.98e-05 (2.06e-07) &  210.0 (11.5) &  {\bf 0.74 (0.14)} &  9.40e-05 (1.98e-06) &  18.1 (3.0) &    309.32 (74.70) &  3.98e-05 (3.02e-05) &    325.0 (234.9) &       7.19 (5.05) &  1.54e-05 (1.24e-05) \\
		cov\_120 &   14400 & 120 &  3056.3 (21.7) &     5.75 (0.30) &  9.99e-05 (8.43e-08) &  1035.5 (120.5) &   2.42 (0.30) &  9.98e-05 (1.03e-07) &  1712.1 (265.0) &    12.04 (1.91) &  9.99e-05 (3.51e-08) &   897.6 (114.1) &     5.55 (1.43) &  9.99e-05 (8.34e-08) &   677.6 (105.9) &    17.14 (3.03) &  9.98e-05 (1.08e-07) &  310.0 (19.0) &  {\bf 1.26 (0.20)} &  9.71e-05 (1.13e-06) &  19.4 (2.0) &    569.46 (33.40) &  4.11e-05 (3.13e-05) &    314.9 (172.8) &      10.71 (5.35) &  2.83e-05 (2.41e-05) \\
		cov\_150 &   22500 & 150 &  3787.5 (13.6) &    10.89 (2.64) &  9.99e-05 (7.50e-08) &  1188.0 (103.6) &   4.21 (0.41) &  9.98e-05 (1.02e-07) &  1720.2 (199.1) &    10.60 (1.27) &  9.99e-05 (4.92e-08) &   1025.0 (97.0) &     6.82 (0.71) &  9.99e-05 (7.49e-08) &    763.1 (88.7) &    41.25 (5.69) &  9.99e-05 (8.65e-08) &  382.7 (20.3) &  {\bf 2.17 (0.24)} &  9.82e-05 (1.20e-06) &  20.0 (2.4) &    665.23 (96.11) &  5.13e-05 (2.74e-05) &    362.5 (201.7) &     18.49 (10.58) &  5.20e-05 (3.28e-05) \\
		cov\_170 &   28900 & 170 &   4268.2 (7.2) &    18.48 (1.99) &  9.99e-05 (5.84e-08) &  1414.9 (197.1) &   8.04 (1.62) &  9.99e-05 (4.47e-08) &  1834.6 (333.0) &    29.92 (5.11) &  9.99e-05 (3.57e-08) &  1223.4 (184.3) &    21.83 (4.67) &  9.99e-05 (8.93e-08) &   916.1 (167.1) &   69.58 (11.15) &  9.99e-05 (7.73e-08) &  447.3 (33.0) &   {\bf6.12 (0.97)} &  9.78e-05 (8.26e-07) &  22.6 (2.9) &  1930.93 (316.94) &  4.05e-05 (2.75e-05) &    611.8 (395.7) &     64.61 (38.46) &  5.63e-05 (2.64e-05) \\
		cov\_200 &   40000 & 200 &  4963.6 (12.8) &    45.46 (3.27) &  9.99e-05 (3.43e-08) &  1728.0 (180.7) &  17.93 (2.45) &  9.99e-05 (4.24e-08) &  1824.3 (215.1) &    53.20 (8.39) &  9.99e-05 (4.35e-08) &  1493.0 (163.0) &    51.21 (8.58) &  9.99e-05 (5.37e-08) &  1116.0 (137.7) &  130.65 (18.04) &  9.99e-05 (6.64e-08) &  530.4 (28.6) &  {\bf 9.21 (0.61)} &  9.87e-05 (7.32e-07) &  23.7 (2.5) &  1794.82 (207.54) &  3.51e-05 (2.32e-05) &    582.4 (266.1) &     57.60 (24.23) &  5.45e-05 (2.88e-05) \\
		cov\_220 &   48400 & 220 &  5348.4 (13.6) &    58.24 (5.08) &  1.00e-04 (2.18e-08) &  1782.2 (196.5) &  23.17 (2.83) &  9.99e-05 (4.50e-08) &  2449.2 (258.4) &    68.85 (7.65) &  9.99e-05 (4.17e-08) &  1541.7 (180.9) &    53.43 (9.05) &  9.99e-05 (6.03e-08) &  1155.5 (159.0) &  178.02 (26.72) &  9.99e-05 (5.25e-08) &  557.4 (27.2) &  {\bf10.40 (1.23)} &  9.85e-05 (1.02e-06) &  23.6 (1.6) &  1855.80 (135.15) &  5.43e-05 (2.21e-05) &    655.6 (216.1) &     73.64 (23.54) &  6.98e-05 (2.29e-05) \\
		cov\_250 &   62500 & 250 &   6022.7 (6.3) &   92.52 (14.39) &  1.00e-04 (2.38e-08) &  2009.6 (175.2) &  36.01 (5.14) &  9.99e-05 (4.97e-08) &  3363.5 (406.1) &  204.46 (22.85) &  1.00e-04 (2.34e-08) &  1725.2 (165.4) &  114.56 (21.08) &  9.99e-05 (5.92e-08) &  1267.0 (154.8) &  324.70 (63.34) &  9.99e-05 (5.10e-08) &  662.7 (39.0) &  {\bf27.99 (3.76)} &  9.88e-05 (7.57e-07) &  27.6 (3.7) &  5039.51 (753.80) &  5.28e-05 (2.30e-05) &  2454.3 (4178.9) &   438.30 (637.84) &  7.57e-05 (1.93e-05) \\
		cov\_270 &   72900 & 270 &   6495.2 (4.6) &  112.35 (16.95) &  1.00e-04 (2.04e-08) &  2211.2 (193.3) &  48.04 (9.08) &  9.99e-05 (4.54e-08) &  3636.2 (408.0) &  241.96 (36.67) &  1.00e-04 (1.13e-08) &  1907.5 (180.4) &  121.16 (31.03) &  9.99e-05 (3.46e-08) &  1419.6 (163.4) &  426.59 (65.65) &  9.99e-05 (4.84e-08) &  690.2 (28.2) &  {\bf23.27 (3.75)} &  9.89e-05 (7.85e-07) &  28.2 (4.6) &  4963.79 (736.25) &  5.06e-05 (1.47e-05) &  3337.9 (5712.1) &  589.54 (1004.08) &  6.08e-05 (1.53e-05) \\
		cov\_300 &   90000 & 300 &  7183.0 (11.1) &   145.69 (1.04) &  1.00e-04 (2.18e-08) &  2338.4 (237.4) &  57.24 (6.23) &  9.99e-05 (4.38e-08) &  3330.5 (416.7) &  259.57 (44.82) &  1.00e-04 (1.87e-08) &  2006.8 (216.5) &  145.92 (19.94) &  9.99e-05 (2.49e-08) &  1472.0 (186.6) &  560.21 (66.08) &  9.99e-05 (4.10e-08) &  775.4 (36.5) &  {\bf25.96 (2.28)} &  9.89e-05 (5.45e-07) &  27.1 (2.0) &  4651.52 (468.64) &  4.82e-05 (2.66e-05) &    971.1 (291.6) &    214.63 (62.11) &  6.46e-05 (1.72e-05) \\
			\midrule
		\multicolumn{20} {c}{Relative error = 1e-06}\\
		\midrule
		cov\_50  &    2500 &  50 &  13178.1 (416.7) &      6.56 (0.75) &  9.95e-07 (2.45e-09) &    1650.5 (412.4) &    1.08 (0.35) &  9.99e-07 (8.02e-10) &    2288.4 (614.9) &       4.28 (1.38) &  9.99e-07 (4.89e-10) &    1584.7 (410.5) &      3.29 (1.07) &  9.99e-07 (5.24e-10) &   1481.5 (407.5) &      11.12 (3.76) &  9.99e-07 (6.58e-10) &   172.8 (10.9) & {\bf  0.25 (0.03)} &  9.11e-07 (4.26e-08) &  16.6 (2.3) &     84.52 (13.55) &  4.45e-08 (4.76e-08) &     122.8 (86.9) &       0.98 (0.58) &  7.53e-08 (1.59e-07) \\
		cov\_80  &    6400 &  80 &  19819.2 (494.7) &     23.01 (3.69) &  9.97e-07 (1.79e-09) &    3380.9 (678.2) &     4.86 (1.10) &  9.99e-07 (2.97e-10) &   4930.8 (1058.8) &      21.94 (4.75) &  1.00e-06 (2.23e-10) &    3277.8 (677.6) &     16.10 (4.96) &  9.99e-07 (3.25e-10) &   3116.0 (677.3) &     47.82 (10.03) &  9.99e-07 (2.50e-10) &   267.9 (11.7) &  {\bf 0.91 (0.17)} &  9.51e-07 (2.96e-08) &  19.5 (3.1) &    325.16 (73.64) &  1.96e-07 (1.66e-07) &    326.0 (235.0) &       7.22 (5.06) &  9.70e-08 (2.07e-07) \\
		cov\_120 &   14400 & 120 &  29325.2 (567.2) &     54.18 (2.16) &  9.99e-07 (1.09e-09) &    4514.2 (851.3) &    10.77 (2.21) &  1.00e-06 (2.38e-10) &  10587.9 (2156.0) &     74.15 (14.24) &  1.00e-06 (8.35e-11) &    4358.0 (846.9) &     25.02 (5.85) &  1.00e-06 (2.61e-10) &   4112.2 (840.5) &     99.25 (23.82) &  1.00e-06 (2.03e-10) &   399.6 (19.6) &  {\bf 1.56 (0.25)} &  9.73e-07 (1.96e-08) &  21.0 (2.2) &    603.12 (32.05) &  1.02e-07 (8.71e-08) &    316.4 (172.8) &      10.76 (5.35) &  4.52e-08 (8.35e-08) \\
		cov\_150 &   22500 & 150 &  36789.8 (544.6) &   116.11 (18.38) &  9.99e-07 (6.05e-10) &   5061.4 (1030.4) &    17.72 (3.83) &  1.00e-06 (2.51e-10) &  10517.3 (2303.2) &     64.70 (13.95) &  1.00e-06 (5.64e-11) &   4876.0 (1022.6) &    38.89 (10.48) &  1.00e-06 (2.19e-10) &  4583.5 (1011.6) &    246.57 (56.16) &  1.00e-06 (2.12e-10) &   495.0 (20.6) &  {\bf 2.73 (0.31)} &  9.69e-07 (1.57e-08) &  21.8 (2.6) &   717.20 (105.85) &  1.60e-07 (2.48e-07) &    364.3 (201.4) &     18.58 (10.56) &  6.98e-09 (4.58e-09) \\
		cov\_170 &   28900 & 170 &  41651.4 (237.0) &   179.79 (21.21) &  9.99e-07 (9.11e-10) &   6865.5 (1430.3) &   39.43 (12.59) &  1.00e-06 (1.54e-10) &  12660.4 (2797.2) &    204.40 (47.65) &  1.00e-06 (8.71e-11) &   6646.7 (1419.3) &   119.90 (38.07) &  1.00e-06 (1.79e-10) &  6301.6 (1403.9) &   477.78 (106.69) &  1.00e-06 (1.70e-10) &   575.3 (33.0) &  {\bf 7.51 (1.11)} &  9.79e-07 (1.52e-08) &  24.2 (3.2) &  2038.72 (344.85) &  3.16e-07 (2.88e-07) &    614.3 (395.6) &     64.89 (38.41) &  1.16e-08 (2.66e-08) \\
		cov\_200 &   40000 & 200 &  48118.1 (683.4) &   439.85 (26.28) &  9.99e-07 (4.86e-10) &   7773.0 (1347.1) &   82.98 (16.96) &  1.00e-06 (1.47e-10) &  10872.1 (1983.0) &    300.09 (49.33) &  1.00e-06 (6.17e-11) &   7505.3 (1333.1) &   289.89 (58.84) &  1.00e-06 (1.25e-10) &  7084.0 (1312.4) &   838.13 (176.41) &  1.00e-06 (8.71e-11) &   681.4 (29.7) &  {\bf 11.31 (0.69)} &  9.78e-07 (1.64e-08) &  25.2 (2.9) &  1881.89 (217.82) &  3.69e-07 (3.18e-07) &    584.9 (265.9) &     57.86 (24.19) &  6.69e-08 (1.94e-07) \\
		cov\_220 &   48400 & 220 &   *50001.0 (0.0) &   552.84 (44.79) &  1.16e-06 (3.46e-08) &   7832.2 (1012.3) &  100.95 (11.50) &  1.00e-06 (1.56e-10) &   8826.2 (1085.6) &    246.51 (29.06) &  1.00e-06 (7.30e-11) &   7556.7 (1005.9) &   335.30 (51.38) &  1.00e-06 (1.58e-10) &   7121.9 (996.9) &  1091.47 (142.32) &  1.00e-06 (8.41e-11) &   724.1 (29.2) &  {\bf 12.66 (1.38)} &  9.82e-07 (1.32e-08) &  25.5 (1.6) &  1977.83 (140.60) &  2.54e-07 (2.34e-07) &    658.3 (215.5) &     73.91 (23.49) &  7.18e-08 (1.94e-07) \\
		cov\_250 &   62500 & 250 &   *50001.0 (0.0) &  767.94 (119.51) &  1.46e-06 (3.57e-08) &   8705.8 (1498.3) &  159.21 (44.27) &  1.00e-06 (1.08e-10) &  21310.9 (3991.2) &  1289.64 (221.73) &  1.00e-06 (2.94e-11) &   8381.2 (1492.7) &  566.67 (171.05) &  1.00e-06 (1.44e-10) &  7868.1 (1486.4) &  2067.88 (583.44) &  1.00e-06 (1.04e-10) &   851.6 (35.8) &  {\bf 34.29 (4.60)} &  9.87e-07 (5.59e-09) &  29.6 (3.6) &  5324.53 (762.52) &  2.35e-07 (2.41e-07) &  2457.6 (4179.1) &   438.91 (637.89) &  7.91e-08 (2.35e-07) \\
		cov\_270 &   72900 & 270 &   *50001.0 (0.0) &  869.79 (131.12) &  1.71e-06 (3.12e-08) &   9900.4 (1396.7) &  217.19 (57.43) &  1.00e-06 (9.54e-11) &  23551.3 (3621.0) &  1511.20 (248.37) &  1.00e-06 (3.08e-11) &   9553.1 (1394.7) &  702.23 (198.56) &  1.00e-06 (1.02e-10) &  9004.9 (1393.2) &  2750.95 (605.23) &  1.00e-06 (9.68e-11) &   895.6 (27.2) &  {\bf 28.86 (4.55)} &  9.80e-07 (5.52e-09) &  30.2 (4.6) &  5256.39 (756.05) &  2.50e-07 (2.22e-07) &  3340.9 (5711.7) &  589.97 (1004.03) &  1.02e-07 (2.36e-07) \\
		cov\_300 &   90000 & 300 &   *50001.0 (0.0) &   1017.68 (9.11) &  2.06e-06 (3.52e-08) &  10531.1 (1653.3) &  258.92 (40.44) &  1.00e-06 (6.88e-11) &  22015.4 (3712.3) &  1578.08 (312.05) &  1.00e-06 (5.01e-11) &  10151.0 (1640.5) &  847.26 (146.52) &  1.00e-06 (1.22e-10) &  9549.6 (1622.9) &  3655.96 (604.43) &  1.00e-06 (1.16e-10) &  1002.3 (38.5) &  {\bf 33.18 (2.92)} &  9.92e-07 (5.68e-09) &  29.2 (2.0) &  4986.63 (453.33) &  2.12e-07 (3.05e-07) &    973.7 (291.6) &    215.23 (62.14) &  2.08e-07 (2.69e-07) \\	\bottomrule
	\end{tabular}
\end{adjustbox}
\caption{Results for covariance estimation example. Number of iterations and CPU time in seconds to achieve a certain relative error or best relative error achieved by methods, as well as the relative error achieved at that iteration.}\label{tbl:cov_estimation}
\end{sidewaystable}

\section{Conclusion}
\label{sec:conclusion}
%

Motivated by the recent interest in computational statistics and machine learning in functions displaying generalized self-concordant properties, this paper develops a set of projection-free algorithms for minimizing generalized self-concordant functions as defined in \cite{SunTran18}. This function class covers several well-known examples, including logistic, power, reciprocal and, of course, standard self-concordant functions. In particular, members of this function class are potentially ill-conditioned: they may neither have a Lipschitz continuous gradient nor be strongly convex on their domain. Hence, no provably convergent Frank-Wolfe method has been available so far for minimizing generalized self-concordant functions. This paper fills this important gap by developing a set of new provably convergent \ac{FW} algorithms with sublinear convergence rates. The key innovation of this paper is the design of new adaptive step-size policies and backtracking formulations, exploiting the specific problem structure of \ac{GSC}-minimization problems. This paper also derives new linearly convergent projection-free methods for the minimization of \ac{GSC} functions. Specifically, we show how to adapt the local linear minimization ideas of \cite{GarHaz16} to the current, potentially ill-conditioned, setup. Together with the concurrent paper \cite{carderera2021simple}, which appeared on arXive after this work has been submitted for publication, we also derive a new linearly convergent variant of the \ac{FW} method featuring linear global convergence rates for \ac{GSC} functions. With the help of extensive numerical experiments, we demonstrate the practical efficiency of our approach.

We conclude by mentioning some interesting potential extensions. First, our theory could be used to derive distributed versions of the algorithms presented in this paper in order to develop a generalized and projection-free variant of the DISCO algorithm \cite{LinZha15}. DISCO is a Newton method designed to minimize a self-concordant function using distributed computations. Projection-free methods which are able to handle the same problem, but now including generalized self-concordant functions, have the potential to be serious competitors in practice. Second, it will be interesting to incorporate gradient sliding techniques \cite{lan2016conditional}, and stochastic versions of our algorithms. Recently, a Newton Frank-Wolfe method has been introduced in \cite{LiuCevTra20}. It seems natural to us that their algorithm can be extended to \ac{GSC} functions. All these are important extensions, which we are planning to pursue in the near future. 
\section*{Acknowledgements}
The authors sincerely thank Professor Shoham Sabach for his contribution in the early stages of this project, including his part in developing the basic ideas developed in this paper. We would also like to thank Professor Quoc Tranh-Dinh for fruitful discussions on this topic and in sharing MATLAB codes of SCOPT with us. Feedback from Professors Robert M. Freund and Sebastian Pokutta are also gratefully acknowledged. Finally, we would like to thank the Associate Editor and the Reviewers for their valuable remarks and suggestions. M. Staudigl acknowledges support by the COST Action CA16228 "European Network for Game Theory". The research by P. Dvurechensky is supported by the Ministry of Science and Higher Education of the Russian Federation (Goszadaniye) No. 075-00337-20-03, project No. 0714-2020-0005 and by RFBR grant 18-29-03071\_mk.

\begin{appendix}
\section{Additional Facts about \ac{GSC} functions}
\label{app:GSC}
%
In order to make this paper self-contained we are collecting in this appendix finer estimates provided by generalized self-concordance. For a complete treatise the reader should consult the seminal paper \cite{SunTran18}. An important feature of GSC functions is their invariance under affine transformations. This is made precise in the following Lemma.
  
\begin{lemma}[\cite{SunTran18}, Prop. 2] 
\label{lem:invariant}
Let $f\in\scrF_{M_{f},\nu}$ and $\scrA(x)=Ax+b:\Rn\to\R^{p}$ a linear operator. Then 
\begin{enumerate}
\item[(a)] If $\nu\in[2,3]$, then $\tilde{f}(x)\eqdef f(\scrA(x))$ is $(M_{\tilde{f}},\nu)$-\ac{GSC} with $M_{\tilde{f}}=M_{f}\norm{A}^{3-\nu}$.
\item[(b)] If $\nu>3$ and $\lambda_{\min}(A^{\top}A)>0$, then $\tilde{f}(x)=f(\scrA(x))$ is $(M_{\tilde{f}},\nu)$-\ac{GSC} with $M_{\tilde{f}}=M_{f}\lambda_{\min}(A^{\top}A)^{\frac{3-\nu}{2}}$, where $\lambda_{\min}(A^{\top}A)$ is the smallest eigenvalue of $A^{\top}A$.
\end{enumerate}
\end{lemma}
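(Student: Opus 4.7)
The plan is to verify the GSC condition for $\tilde{f}=f\circ\scrA$ directly from Definition~\ref{def:GSC} applied to $f$, with a chain-rule computation to push derivatives through the linear map, and then use appropriate spectral bounds on $A$ to convert norms in the codomain back to norms in the domain.

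First, I would compute the derivatives of $\tilde f$. Since $\scrA$ is affine with linear part $A$, the chain rule gives $\nabla^{2}\tilde{f}(x) = A^{\top}\nabla^{2}f(\scrA(x))A$ and
\[
\inner{D^{3}\tilde{f}(x)[v]u,u} = \inner{D^{3}f(\scrA(x))[Av]Au,Au}\qquad\forall u,v\in\Rn.
\]
In particular, the local seminorm induced by $\tilde f$ at $x$ coincides with the local seminorm induced by $f$ at $\scrA(x)$ applied to the image vector, namely $\norm{u}_{x,\tilde f}^{2} = \inner{A^{\top}\nabla^{2}f(\scrA(x))Au,u} = \norm{Au}_{\scrA(x),f}^{2}$, and similarly for $v$.

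Next, I would apply the GSC inequality \eqref{eq:GSC} for $f$ at the point $\scrA(x)$ with the vectors $Au,Av\in\R^{p}$. This gives
\[
\abs{\inner{D^{3}\tilde{f}(x)[v]u,u}} \;\leq\; M_{f}\,\norm{Au}_{\scrA(x),f}^{2}\,\norm{Av}_{\scrA(x),f}^{\nu-2}\,\norm{Av}_{2}^{3-\nu}.
\]
The first two factors are already $\norm{u}_{x,\tilde f}^{2}\norm{v}_{x,\tilde f}^{\nu-2}$ by the identification above, so the only remaining task is to control $\norm{Av}_{2}^{3-\nu}$ in terms of $\norm{v}_{2}^{3-\nu}$.

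This is where the two cases split, and it is the only nontrivial step. For (a) with $\nu\in[2,3]$, the exponent $3-\nu$ is nonnegative, so the submultiplicativity bound $\norm{Av}_{2}\leq\norm{A}\,\norm{v}_{2}$ yields $\norm{Av}_{2}^{3-\nu}\leq\norm{A}^{3-\nu}\norm{v}_{2}^{3-\nu}$, producing exactly $M_{\tilde f}=M_{f}\norm{A}^{3-\nu}$. For (b) with $\nu>3$, the exponent is negative, so we instead need a lower bound on $\norm{Av}_{2}$; the assumption $\lambda_{\min}(A^{\top}A)>0$ gives $\norm{Av}_{2}^{2}=\inner{A^{\top}Av,v}\geq\lambda_{\min}(A^{\top}A)\norm{v}_{2}^{2}$, and since $3-\nu<0$ raising to this power flips the inequality to $\norm{Av}_{2}^{3-\nu}\leq\lambda_{\min}(A^{\top}A)^{(3-\nu)/2}\norm{v}_{2}^{3-\nu}$, producing $M_{\tilde f}=M_{f}\lambda_{\min}(A^{\top}A)^{(3-\nu)/2}$. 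Assembling the three factors in both cases yields the claimed GSC inequality for $\tilde f$ with the stated constants, which completes the proof.
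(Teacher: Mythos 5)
Your proof is correct and complete. The paper itself does not supply a proof of Lemma~\ref{lem:invariant}; it simply cites \cite{SunTran18}, Prop.~2. Your argument is the natural direct verification: compute the second and third derivatives of $\tilde f = f\circ\scrA$ via the chain rule, observe that the local seminorms transform by $\norm{u}_{x,\tilde f}=\norm{Au}_{\scrA(x),f}$ so two of the three factors in the GSC inequality are absorbed exactly, and handle the residual Euclidean factor $\norm{Av}_{2}^{3-\nu}$ with the operator-norm bound when $3-\nu\ge 0$ and with the smallest singular value (and the reversal of the inequality under a negative exponent) when $3-\nu<0$. One small point worth stating explicitly for case (b): the hypothesis $\lambda_{\min}(A^{\top}A)>0$ ensures $A$ is injective, so $Av=0$ forces $v=0$, in which case both sides of \eqref{eq:GSC} vanish and the negative power poses no issue; for $v\neq 0$ the spectral lower bound $\norm{Av}_{2}\ge\lambda_{\min}(A^{\top}A)^{1/2}\norm{v}_{2}$ is strict enough to make $\norm{Av}_{2}^{3-\nu}$ well-defined. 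With that remark your derivation matches the standard proof of the cited result.
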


When we apply \ac{FW} to the minimization of a function $f\in\scrF_{M}$, the search direction at position $x$ is determined by the target state $s(x)=s$ defined in \eqref{eq:s}. If $A:\tilde{\scrX}\to\scrX$ is a surjective linear re-parametrization of the domain $\scrX$, then the new optimization problem $\min_{\tilde{\scrX}}\tilde{f}(\tilde{x})=f(A\tilde{x})$ is still within the frame of problem \eqref{eq:P}. Furthermore, the updates produced by \ac{FW} are not affected by this re-parametrization since 
$\inner{\nabla\tilde{f}(\tilde{x}),\hat{s}}=\inner{\nabla f(A\tilde{x}),A\hat{s}}=\inner{\nabla f(x),s}$ for $x=A\tilde{x}\in\scrX,s=A\hat{s}\in\scrX$.\\

Beside affine invariance, we will use some stability properties of \ac{GSC} functions. 

\begin{proposition}[\cite{SunTran18}, Prop. 1]
\label{prop:sum}
Let $f_{i}\in\scrF_{M_{f_{i}},\nu}$ where $M_{f_{i}}\geq 0$ and $\nu\geq 2$ for $i=1,\ldots,N$. Then, given scalars $w_{i}>0,1\leq i\leq N$, the function $f\eqdef\sum_{i=1}^{N}w_{i}f_{i}$ is well defined on $\dom f\eqdef \bigcap_{i=1}^{N}\dom f_{i}$ and belongs to $\scrF_{M_{f},\nu}$, where $M_{f}\eqdef \max_{1\leq i\leq N}w_{i}^{1-\frac{\nu}{2}}M_{f_{i}}$. 
\end{proposition}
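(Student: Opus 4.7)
The plan is to verify the \ac{GSC} inequality \eqref{eq:GSC} directly for $f=\sum_{i=1}^N w_i f_i$ by combining the triangle inequality on the third derivative with the subadditivity of the Hessian-induced semi-norms. Concretely, since $\nabla^2 f(x)=\sum_i w_i \nabla^2 f_i(x)$, if we introduce the component semi-norms $\norm{u}_{x,i}^{2}\eqdef\inner{\nabla^2 f_i(x)u,u}$ then $\norm{u}_x^{2}=\sum_i w_i \norm{u}_{x,i}^{2}$, and in particular for every $i$ we have the elementary pointwise bound $w_i \norm{v}_{x,i}^{2}\leq \norm{v}_x^{2}$, equivalently $\norm{v}_{x,i}\leq w_i^{-1/2}\norm{v}_x$. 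This pointwise comparison is the only real analytical ingredient we will need.

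From the trilinearity of $D^3 f(x)$ and the triangle inequality,
\begin{align*}
\abs{\inner{D^3 f(x)[v]u,u}}\leq \sum_{i=1}^N w_i \abs{\inner{D^3 f_i(x)[v]u,u}}.
\end{align*}
Applying the \ac{GSC} inequality \eqref{eq:GSC} for each $f_i$ with parameters $(M_{f_i},\nu)$ to the right-hand side and then using the norm comparison $\norm{v}_{x,i}^{\nu-2}\leq w_i^{-(\nu-2)/2}\norm{v}_x^{\nu-2}$ (which is valid for all $\nu\geq 2$) yields
\begin{align*}
\abs{\inner{D^3 f(x)[v]u,u}}\leq \norm{v}_x^{\nu-2}\norm{v}_2^{3-\nu}\sum_{i=1}^N w_i^{1-(\nu-2)/2}M_{f_i}\norm{u}_{x,i}^{2}.
\end{align*}

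The last step is to absorb the component norms $\norm{u}_{x,i}^2$ into the full norm $\norm{u}_x^2$. We bound
\begin{align*}
\sum_{i=1}^N w_i^{1-(\nu-2)/2}M_{f_i}\norm{u}_{x,i}^{2}=\sum_{i=1}^N\bigl(w_i^{-\nu/2}M_{f_i}\bigr)\,w_i\norm{u}_{x,i}^{2}\cdot w_i
\end{align*}
(just rewriting powers) so that $w_i^{1-(\nu-2)/2}=w_i^{(4-\nu)/2}=w_i\cdot w_i^{(2-\nu)/2}=w_i\cdot w_i^{1-\nu/2}$. Therefore the coefficient in front of $w_i\norm{u}_{x,i}^2$ is exactly $w_i^{1-\nu/2}M_{f_i}$, and replacing it by its maximum over $i$ gives
\begin{align*}
\sum_{i=1}^N w_i^{1-(\nu-2)/2}M_{f_i}\norm{u}_{x,i}^{2}\leq \Bigl(\max_{1\leq i\leq N}w_i^{1-\nu/2}M_{f_i}\Bigr)\sum_{i=1}^N w_i\norm{u}_{x,i}^{2}=M_f\norm{u}_x^{2}.
\end{align*}
Combining the two displays delivers the desired inequality with the stated $M_f$, and the claim on the domain follows because each $f_i$ is a barrier function of $\dom f_i$.

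The only genuine obstacle is purely bookkeeping: matching the exponent $1-\nu/2$ to the factor $w_i^{-(\nu-2)/2}$ produced by the norm comparison together with the extra $w_i$ in front of each summand. Once that arithmetic identity is noted, the argument reduces to a single application of the \ac{GSC} definition term-by-term plus two uses of the elementary inequality $w_i\norm{\cdot}_{x,i}^{2}\leq \norm{\cdot}_x^{2}$.
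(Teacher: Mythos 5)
Your proof is correct: the paper does not prove this proposition (it cites it from \cite{SunTran18}, Prop.\ 1, in an appendix collecting background facts), and your direct verification---triangle inequality on $D^3 f$, the term-wise \ac{GSC} bound, the seminorm comparison $w_i\norm{\cdot}_{x,i}^2\le\norm{\cdot}_x^2$ applied to the $\norm{v}^{\nu-2}$ factor, and then factoring out $\max_i w_i^{1-\nu/2}M_{f_i}$ against $\sum_i w_i\norm{u}_{x,i}^2=\norm{u}_x^2$---is exactly the standard argument used in that reference. One small presentational nit: in the intermediate display $\sum_i (w_i^{-\nu/2}M_{f_i})\,w_i\norm{u}_{x,i}^2\cdot w_i$, the trailing $\cdot w_i$ makes the grouping misleading; writing it directly as $\sum_i (w_i^{1-\nu/2}M_{f_i})\bigl(w_i\norm{u}_{x,i}^2\bigr)$, as your following sentence in fact does, is cleaner.
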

As corollary of this Proposition and invariance under linear transformations, we obtain the next characterization theorem, which is of particular importance in machine learning applications.

Given $N$ functions $\varphi_{i}\in\scrF_{M_{\varphi_{i}},\nu}$. For $(a_{i},b_{i})\in\Rn\times\R,q\in\Rn$ and $Q\in\R^{n\times n}$ a positive definite and symmetric matrix, consider the finite-sum model
\begin{equation}\label{eq:finitesum}
f(x)\eqdef \sum_{i=1}^{N}\varphi_{i}(\inner{a_{i},x}+b_{i})+\inner{q,x}+\frac{1}{2}\inner{Qx,x}
\end{equation}

\begin{proposition}[\cite{SunTran18}, Prop. 5] 
\label{prop:composite}
If $\varphi_{i}\in\scrF_{M_{\varphi_{i}},\nu}$ for $\nu\in(0,3]$, then $f:\Rn\to(-\infty,\infty]$ defined in \eqref{eq:finitesum} belongs to $\scrF_{M_{f},3}$, where $M_{f}\eqdef \lambda_{\min}(Q)^{(\nu-3)/2}\max_{1\leq i\leq N}M_{\varphi_{i}}\norm{a_{i}}^{3-\nu}_{2}$.
 \end{proposition}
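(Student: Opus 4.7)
The plan is to reduce the multivariate $(M_{f},3)$-GSC condition for $f$ to the univariate $(M_{\varphi_{i}},\nu)$-GSC bound on each $\varphi_{i}$, using the quadratic term to absorb the exponent mismatch between $\nu$ and $3$. By the chain rule,
\[
\nabla^{2}f(x)=Q+\sum_{i=1}^{N}\varphi_{i}''(t_{i})\,a_{i}a_{i}^{\top},\qquad \inner{D^{3}f(x)[v]u,u}=\sum_{i=1}^{N}\varphi_{i}'''(t_{i})\inner{a_{i},u}^{2}\inner{a_{i},v},
\]
with $t_{i}\eqdef\inner{a_{i},x}+b_{i}$; the linear and quadratic terms contribute to the Hessian only. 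Applying the univariate GSC bound $\abs{\varphi_{i}'''(t_{i})}\leq M_{\varphi_{i}}\varphi_{i}''(t_{i})^{\nu/2}$ termwise, the target inequality $\abs{\inner{D^{3}f(x)[v]u,u}}\leq M_{f}\norm{u}_{x}^{2}\norm{v}_{x}$ reduces to controlling $\sum_{i}M_{\varphi_{i}}\varphi_{i}''(t_{i})^{\nu/2}\inner{a_{i},u}^{2}\abs{\inner{a_{i},v}}$.

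Two elementary inequalities drive the whole estimate and follow directly from the structure of $\nabla^{2}f$. Setting $\xi_{i}(w)\eqdef\varphi_{i}''(t_{i})\inner{a_{i},w}^{2}\geq 0$, the decomposition $\norm{w}_{x}^{2}=\inner{Qw,w}+\sum_{i}\xi_{i}(w)$ yields both the termwise bound $\xi_{i}(w)\leq\norm{w}_{x}^{2}$ and the aggregate bound $\sum_{i}\xi_{i}(w)\leq\norm{w}_{x}^{2}$. Furthermore, $\nabla^{2}f(x)\succeq Q\succeq\lambda_{\min}(Q)I$ gives $\norm{w}_{2}\leq\lambda_{\min}(Q)^{-1/2}\norm{w}_{x}$, which is precisely where the prefactor $\lambda_{\min}(Q)^{(\nu-3)/2}$ will enter.

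For the main bound in the range $\nu\in[2,3]$, I would use the factorization
\[
\varphi_{i}''(t_{i})^{\nu/2}\inner{a_{i},u}^{2}\abs{\inner{a_{i},v}}=\xi_{i}(u)\cdot\bigl(\xi_{i}(v)\bigr)^{(\nu-2)/2}\cdot\abs{\inner{a_{i},v}}^{3-\nu},
\]
bound the last factor by $\norm{a_{i}}^{3-\nu}\norm{v}_{2}^{3-\nu}\leq\norm{a_{i}}^{3-\nu}\lambda_{\min}(Q)^{(\nu-3)/2}\norm{v}_{x}^{3-\nu}$, and pull $\max_{i}M_{\varphi_{i}}\norm{a_{i}}^{3-\nu}$ out of the sum. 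What remains is $\sum_{i}\xi_{i}(u)(\xi_{i}(v))^{(\nu-2)/2}$: since $(\nu-2)/2\geq 0$, I would replace $(\xi_{i}(v))^{(\nu-2)/2}$ by $(\max_{j}\xi_{j}(v))^{(\nu-2)/2}\leq\bigl(\sum_{j}\xi_{j}(v)\bigr)^{(\nu-2)/2}\leq\norm{v}_{x}^{\nu-2}$, and then use $\sum_{i}\xi_{i}(u)\leq\norm{u}_{x}^{2}$. Assembling the pieces gives $\abs{\inner{D^{3}f(x)[v]u,u}}\leq M_{f}\norm{u}_{x}^{2}\norm{v}_{x}$ with exactly the advertised $M_{f}$.

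The main obstacle is choosing the factorization so that (i) only a maximum over $i$ of $M_{\varphi_{i}}\norm{a_{i}}^{3-\nu}$ is required rather than a sum, and (ii) the exponents on $\norm{u}_{x}$, $\norm{v}_{x}$, and $\norm{v}_{2}$ reassemble exactly to the $\nu=3$ target shape. For $\nu\in(0,2)$ the exponent $(\nu-2)/2$ becomes negative and the displayed factorization no longer closes; in that range one instead uses the dual decomposition $\varphi_{i}''(t_{i})^{\nu/2}\inner{a_{i},u}^{2}=\xi_{i}(u)^{\nu/2}\abs{\inner{a_{i},u}}^{2-\nu}$, spends the strong convexity $\lambda_{\min}(Q)>0$ on both $u$ and $v$ factors, and aggregates over $i$ using sub-additivity of $t\mapsto t^{\nu/2}$ (together with the termwise bound $\xi_{i}(u)\leq\norm{u}_{x}^{2}$) to recover the same maximum-type constant $M_{f}$.
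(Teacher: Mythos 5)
Your argument for $\nu\in[2,3]$ is correct and essentially complete. The formulas for $\nabla^{2}f$ and $D^{3}f$, the termwise application of the univariate GSC bound, the factorization $\varphi_{i}''(t_{i})^{\nu/2}\inner{a_{i},u}^{2}\abs{\inner{a_{i},v}}=\xi_{i}(u)\,\xi_{i}(v)^{(\nu-2)/2}\abs{\inner{a_{i},v}}^{3-\nu}$, the chain $\xi_{i}(v)^{(\nu-2)/2}\leq\bigl(\sum_{j}\xi_{j}(v)\bigr)^{(\nu-2)/2}\leq\norm{v}_{x}^{\nu-2}$ (valid precisely because $(\nu-2)/2\geq 0$), the bound $\sum_{i}\xi_{i}(u)\leq\norm{u}_{x}^{2}$, and finally $\norm{v}_{2}\leq\lambda_{\min}(Q)^{-1/2}\norm{v}_{x}$ assemble cleanly into $\abs{\inner{D^{3}f(x)[v]u,u}}\leq M_{f}\norm{u}_{x}^{2}\norm{v}_{x}$ with exactly the advertised constant. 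This is a direct verification of the $\nu=3$ GSC inequality and is the only regime the paper actually uses.

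The sketch for $\nu\in(0,2)$ does not close, and the gap is not cosmetic. Sub-additivity of $t\mapsto t^{\nu/2}$ for $\nu/2\leq 1$ gives $\bigl(\sum_{j}\xi_{j}(u)\bigr)^{\nu/2}\leq\sum_{j}\xi_{j}(u)^{\nu/2}$, which is the wrong direction: you need an \emph{upper} bound on $\sum_{i}\xi_{i}(u)^{\nu/2}$ by $\norm{u}_{x}^{\nu}$, and neither sub-additivity nor the termwise estimate $\xi_{i}(u)\leq\norm{u}_{x}^{2}$ produces one without an extra factor of $N$. In fact the statement with the given $N$-free constant fails for $\nu<2$. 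Take $N=10$, all $a_{i}=a$ a fixed unit vector, $b_{i}=0$, $q=0$, $Q=I$, and $\varphi_{i}(t)=t^{4}/48$, for which $\varphi_{i}''(t)=t^{2}/4$ and $\abs{\varphi_{i}'''(t)}=\abs{t}/2=\varphi_{i}''(t)^{1/2}$, so each $\varphi_{i}\in\scrF_{1,1}$ and the claimed constant is $M_{f}=1$. At a point $x$ with $\inner{a,x}=1/2$ and $u=v=a$ one gets $\inner{D^{3}f(x)[a]a,a}=10\cdot\tfrac{1}{2}\cdot\tfrac{1}{2}=\tfrac{5}{2}$ while $\norm{a}_{x}^{2}=\tfrac{5}{2}\cdot\tfrac{1}{4}+1=\tfrac{13}{8}$, so $M_{f}\norm{a}_{x}^{2}\norm{a}_{x}=(13/8)^{3/2}\approx 2.07<\tfrac{5}{2}$. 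Hence no proof can establish the result as stated on $\nu\in(0,2)$; that range should be dropped (or the constant replaced by an $N$-dependent one), and the proof should be restricted to the $\nu\in[2,3]$ case you already handled correctly.
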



\section{Proof of Proposition \ref{th:tau}}
\label{app:Appendix1}
%
\subsection{Preparations}
The proof of Proposition \ref{th:tau} is an application of the technical Lemma below. 
\begin{lemma}
\label{lem:t_cases}
Consider the function 
\begin{equation}
\label{eq:psi_def}
\psi_{\nu}(t) \eqdef t - \xi \omega_{\nu}(t\delta)t^2,
\end{equation}
where $\xi, \delta \geq 0$ are parameters and $\nu \in [2,3]$. For all $\nu\in [2,3]$, the function $t\mapsto \psi_{\nu}(t)$ is concave and differentiable. The unique maximum of this function is achieved at
\begin{equation}
\label{eq:t_opt}
t^{\ast}_{\nu}\eqdef \left\{\begin{array}{ll}
\frac{1}{\delta}\ln\left(1+\frac{\delta}{\xi}\right) & \text{ if }\nu=2, \\
\frac{1}{\delta}\left[1-\left(1+\frac{\delta}{\xi}\frac{4-\nu}{\nu-2}\right)^{-\frac{\nu-2}{4-\nu}}\right] & \text{ if }\nu\in(2,3),\\
\frac{1}{\delta+\xi} & \text{ if }\nu=3,\\
\end{array}\right. 
\end{equation}
\end{lemma}
\begin{proof}
We will organize the proof of Lemma \ref{lem:t_cases} according to the generalized self-concordance parameter $\nu\in[2,3]$.

\paragraph{The case $\nu=2$: } 
For this parameter we have $\omega_{2}(t)=\frac{1}{t^{2}}[e^{t}-t-1]$, and thus
\[
\psi_{2}(t)=t-\frac{\xi}{\delta^{2}}[e^{t\delta}-t\delta -1].
\]
This is a strictly concave function with unique maximum at 
\begin{equation}\label{eq:t2}
t^{\ast}_{2}=\frac{1}{\delta}\ln\left(1+\frac{\delta}{\xi}\right).
\end{equation}

\paragraph{The case $\nu\in(2,3)$:} 
Since $\omega_{\nu}(t)=\left(\frac{\nu-2}{4-\nu}\right)\frac{1}{t}\left[\frac{\nu-2}{2(3-\nu)t}((1-t)^{\frac{2(3-\nu)}{2-\nu}}-1)-1\right]$, some simple algebra shows that
\begin{align*}
\psi_{\nu}(t)=t\left(1+\frac{\xi}{\delta}\frac{\nu-2}{4-\nu}\right)-\frac{\xi}{\delta^{2}}\frac{(\nu-2)^{2}}{2(3-\nu)(4-\nu)}\left[(1-t\delta)^{\frac{2(3-\nu)}{2-\nu}}-1\right].
\end{align*}
Setting $\psi'_{\nu}(t)=0$, yields the value 
\begin{align*}
t^{\ast}_{\nu}=\frac{1}{\delta}\left[1-\left(1+\frac{\delta}{\xi}\frac{4-\nu}{\nu-2}\right)^{-\frac{\nu-2}{4-\nu}}\right]. 
\end{align*}
It is easy to check that $\psi''_{\nu}(t)=-\xi(1-t\delta)^{\frac{2}{2-\nu}}<0$, so that $t^{\ast}$ is the global maximum of $\psi_{\nu}(t)$.

\paragraph{The case $\nu=3$:}
For this case, we have $\omega_{3}(t)=\frac{-t-\ln(1-t)}{t^{2}}$. It is easy to see that 
\begin{align*}
\psi_{3}(t)=t+\frac{\xi}{\delta^{2}}[t\delta+\ln(1-t\delta)]\qquad t\in(0,1/\delta).
\end{align*}
Therefore, for $t\in(0,1/\delta)$, we see that 
\begin{align*}
\psi'_{3}(t)=1+\frac{\xi}{\delta^{2}}\left(\delta-\frac{\delta}{1-t\delta}\right),\text{ and } \psi''_{3}(t)=-\frac{\xi}{\delta}(1-t\delta)^{-2}<0.
\end{align*} 
The unique maximum is attained at 
\begin{align*}
t^{\ast}_{3}=\frac{1}{\delta+\xi}. 
\end{align*}

\end{proof}

\subsection{Proof of Theorem \ref{th:tau}}
Identifying the parameters involved in \eqref{eq:psi_def} as $\delta=M_{f}\delta_{\nu}(x)$, and $\xi=\frac{\ce(x)^{2}}{\gap(x)}$ gives us
\[
\eta_{x,M_{f},\nu}(t)=\gap(x)\psi_{\nu}(t).
\]
Hence, the following explicit expressions for the step-size parameters are immediate consequences of Lemma \ref{lem:t_cases}.  

\begin{itemize}
\item[$\nu=2$:] Since $M_{f}\delta_{2}(x)=M_{f}\beta(x)$ we get the relation 
\begin{align*}
\ct_{M_{f},2}(x)=\frac{1}{M_{f}\beta(x)}\ln\left(1+\frac{M_{f}\beta(x)}{\ce(x)^{2}}\gap(x)\right).
\end{align*}
\item[$\nu\in(2,3)$:] Set $\delta=M_{f}\delta_{\nu}(x)=\frac{\nu-2}{2}M_{f}\beta(x)^{3-\nu}\ce(x)^{\nu-2}$ and $\xi=\frac{\ce(x)^{2}}{\gap(x)}$, we get 
\begin{align*}
\ct_{M_{f},\nu}(x)=\frac{2}{\nu-2}\frac{1}{M_{f}}\beta(x)^{\nu-3}\ce(x)^{2-\nu}\left[1-\left(1+\frac{4-\nu}{2}M_{f}\beta(x)^{3-\nu}\ce(x)^{\nu-4}\gap(x)\right)^{\frac{2-\nu}{4-\nu}}\right].
\end{align*}

\item[$\nu=3$:] Since $M_{f}\delta_{3}(x)=\frac{M_{f}}{2}\ce(x)$, we get 
\begin{align*}
\ct_{M_{f},3}(x)=\frac{\gap(x)}{\frac{M}{2}\ce(x)(\frac{2}{M}\ce(x)+\gap(x))}
\end{align*}

\end{itemize}
This completes the proof of Theorem \ref{th:tau}. \qed

\section{Auxiliary Results needed in the proof of Theorem \ref{th:FW}}
\label{app:Appendix2}
%

\subsection{Proof of Lemma \ref{lem:Deltage1}}
\label{sec:lemma_largestep}
Set $x\equiv x^{k}$. Since $\ct_{M_{f},\nu}(x)>1$, the decrease of the objective function is
 \[
\eta_{x,M_{f},\nu}(1)=\gap(x) \left(1-\frac{\ce(x)^2}{\gap(x)}\omega_{\nu}(M_{f}\delta_{\nu}(x))\right).
 \]
If $\nu>2$ we know that $M_{f}\delta_{\nu}(x)\leq \ct_{\nu}(x)M_{f} \delta_{\nu}(x) <1$, and the expression above is well-defined. If $\nu=2$, the domain of the function $\omega_{2}$ is full, and again the expression above is well-defined. Set $\zeta_{\nu}(t)\eqdef\omega_{\nu}(tM_{f}\delta_{\nu}(x))t^2$ and $\xi(x)\eqdef \frac{\ce(x)^2}{\gap(x)}$, so that 
\[
\frac{\eta_{x,M_{f},\nu}(t)}{\gap(x)}=t-\zeta_{\nu}(t)\xi(x),
\]
where $t\in(0,\infty)$ if $\nu=2$ and $t\in(0,\frac{1}{M_{f}\delta_{\nu}(x)})$ for $\nu\in(2,3]$. By definition, $\ct_{M_{f},\nu}(x)$ is the unconstrained maximizer of the right-hand-side above. Therefore, $1-\xi(x)\zeta^{\prime}_{\nu}(\ct_{M_{f},\nu}(x))=0$. Since $t\mapsto \zeta_{\nu}(t)$ is convex, its derivative is a non-decreasing function. Thus, since we assume that $1<\ct_{M_{f},\nu}(x)$, it follows $\xi(x) = \frac{1}{\zeta^{\prime}_{\nu}(\ct_{M_{f},\nu}(x))} \leq \frac{1}{\zeta^{\prime}_{\nu}(1)}$. Moreover, $\zeta_{\nu}(1) \geq 0$, so that
\begin{align*}
\frac{\eta_{x,M_{f},\nu}(1)}{\gap(x)} &=1- \xi(x) \zeta_{\nu}(1)= 1 - \frac{\zeta_{\nu}(1)}{\zeta'_{\nu}(\ct_{M_{f},\nu}(x))}\geq 1 -  \frac{\zeta_{\nu}(1)}{\zeta'_{\nu}(1)}\\
&=1-\frac{\omega_{\nu}(M_{f}\delta_{\nu}(x))}{2\omega_{\nu}(M_{f}\delta_{\nu}(x)) + M_{f}\delta_{\nu}(x) \omega'_{\nu}(M_{f}\delta_{\nu}(x))}\\
&\geq \frac{1}{2}.
\end{align*}
where we used that  $\omega'_{\nu}(t) \geq 0$ for $t>0$. 
\qed
\subsection{Proof of Lemma \ref{lem:tildedelta} }
\label{sec:delta_lower}
We first prove a general lower estimate on the per-iteration progress.
\begin{lemma}\label{lem:Delta_lower}
Suppose that $\ct_{\nu}(x^{k})\leq 1$. Then, the per-iteration progress in the objective function value is lower bounded by
\begin{equation}
\label{eq:Delta_lower_opt_step}
\Delta_k\geq\left\{\begin{array}{ll}
\frac{2\ln(2)-1}{\ce(x^{k})}\min\left\{\frac{\ce(x)\gap(x^{k})}{M_{f}\beta(x^{k})},\frac{\gap(x^{k})^{2}}{\ce(x^{k})}\right\}& \text{ if }\nu=2, \\
\tilde{\gamma}_{\nu}\min\left\{\frac{\gap(x^{k})}{\frac{\nu-2}{2}M_{f}\beta(x^{k})^{3-\nu}\ce(x^{k})^{\nu-2}},\frac{-1}{\cb}\frac{\gap(x^{k})^2}{\ce(x^{k})^{2}}\right\} & \text{if }\nu\in(2,3),\\
\frac{2(1-\ln(2))}{M_{f}\ce(x^{k})}\min\left\{\gap(x^{k}),\frac{M_{f}\gap(x^{k})^{2}}{\ce(x^{k})}\right\} & \text{if }\nu=3.
 \end{array}\right. 
\end{equation}
where  $\tilde{\gamma}_{\nu}\eqdef 1+\frac{4-\nu}{2(3-\nu)}\left(1-2^{2(3-\nu)/(4-\nu)}\right)$ and $\cb\eqdef \frac{2-\nu}{4-\nu}$. 
\end{lemma}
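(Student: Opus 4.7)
The assumption $\ct_{M_f,\nu}(x^k)\le 1$ ensures that the clipped step $\alpha_{M_f,\nu}(x^k)=\min\{1,\ct_{M_f,\nu}(x^k)\}$ coincides with the unrestricted maximizer of the strictly concave map $t\mapsto\eta_{x^k,M_f,\nu}(t)$, so $\Delta_k=\sup_{t\ge 0}\eta_{x^k,M_f,\nu}(t)=\eta_{x^k,M_f,\nu}(\ct_{M_f,\nu}(x^k))$. The plan is to substitute the closed form of $\ct_{M_f,\nu}(x^k)$ given in Proposition \ref{th:tau} into $\eta_{x^k,M_f,\nu}$, collect the result in terms of a single dimensionless quantity, and then lower bound by a $\min$ of a linear and a quadratic term.

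The simplification exploits the fact that, after substitution, $\Delta_k$ depends on $(x^k,M_f)$ only through the dimensionless ratio
\[
\zeta\eqdef\frac{M_f\,\delta_\nu(x^k)\,\gap(x^k)}{\ce(x^k)^2},
\]
up to the scale factor $\ce(x^k)^2/(M_f\delta_\nu(x^k))^2$. Indeed, plugging $s^{\ast}=\ct_{M_f,\nu}(x^k)M_f\delta_\nu(x^k)$ into the identity $\eta_{x^k,M_f,\nu}(\ct_{M_f,\nu}(x^k))=\frac{\ce(x^k)^2}{(M_f\delta_\nu(x^k))^2}\bigl[\zeta s^{\ast}-\omega_\nu(s^{\ast})(s^{\ast})^2\bigr]$ and using \eqref{eq:t_x_opt}, one obtains $\Delta_k=\frac{\ce(x^k)^2}{(M_f\delta_\nu(x^k))^2}H_\nu(\zeta)$, where $H_2(\zeta)=(1+\zeta)\ln(1+\zeta)-\zeta$, $H_3(\zeta)=\zeta-\ln(1+\zeta)$, and for $\nu\in(2,3)$ the function $H_\nu$ admits the analogous closed form obtained by inserting the power $(1+A\zeta)^{-1/A}$, $A=\frac{4-\nu}{\nu-2}$, from \eqref{eq:t_x_opt}.

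The heart of the proof is then a scalar inequality of the form $H_\nu(\zeta)\ge c_\nu\min\{\zeta,r_\nu\zeta^2\}$ for all $\zeta>0$, with explicit positive constants $c_\nu,r_\nu$ matching the statement. I would establish it via two elementary monotonicity claims on $(0,\infty)$: the ratio $\zeta\mapsto H_\nu(\zeta)/\zeta$ is nondecreasing, while $\zeta\mapsto H_\nu(\zeta)/\zeta^2$ is nonincreasing. Evaluating $H_\nu$ at the transition point where the two branches $c_\nu\zeta$ and $c_\nu r_\nu\zeta^2$ coincide furnishes the constants $2\ln 2-1$ (for $\nu=2$), $\tilde{\gamma}_\nu$ (for $\nu\in(2,3)$), and $2(1-\ln 2)$ (for $\nu=3$) advertised in \eqref{eq:Delta_lower_opt_step}. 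Back-substituting via the identities $\frac{\ce^2}{(M_f\delta_\nu)^2}\zeta=\frac{\gap}{M_f\delta_\nu}$ and $\frac{\ce^2}{(M_f\delta_\nu)^2}\zeta^2=\frac{\gap^2}{\ce^2}$, and inserting the definition of $\delta_\nu$ from \eqref{eq:delta_x}, then delivers each of the three lines of \eqref{eq:Delta_lower_opt_step}.

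The main obstacle will be the intermediate regime $\nu\in(2,3)$: the cases $\nu=2$ and $\nu=3$ reduce to short elementary estimates involving $\ln(1+\zeta)$, but for $\nu\in(2,3)$ the fractional exponent in $H_\nu$ makes the monotonicity of $H_\nu/\zeta$ and $H_\nu/\zeta^2$ a more involved calculus verification. A useful consistency check is that the closed form of $H_\nu$ degenerates to $H_2$ and $H_3$ in the limits $\nu\to 2^+$ and $\nu\to 3^-$, and that the constant $\tilde{\gamma}_\nu$ is correspondingly continuous at $\nu=3$, consistent with the Remark following Lemma \ref{lem:tildedelta}.
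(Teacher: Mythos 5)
Your proposal is correct and follows essentially the same strategy as the paper's own proof: the paper works with $\psi_{\nu}(t)=t-\xi\omega_{\nu}(t\delta)t^{2}$ where $\delta=M_{f}\delta_{\nu}(x^{k})$ and $\xi=\ce(x^{k})^{2}/\gap(x^{k})$, plugs in the closed-form maximizer $t^{*}_{\nu}$, and lower bounds the resulting scalar expression by a $\min$ of a linear and quadratic term using two monotonicity computations; your reformulation in the dimensionless variable $\zeta=\delta/\xi$ and the functions $H_{\nu}$ is just a convenient reparametrization of the same object (note $\phi(\xi/\delta)=H_{\nu}(\zeta)/\zeta$ and $\gamma(u)=H_{\nu}(\zeta)/\zeta$, $\gamma(u)/(u-1)=-\cb H_{\nu}(\zeta)/\zeta^{2}$). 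The only cosmetic difference is that the paper states its two monotonicity claims on subintervals split at the break point (e.g.\ $u\in(1,2]$ and $u\geq 2$ for $\nu\in(2,3)$), whereas you assert them globally on $(0,\infty)$ — but the paper's derivative computations (e.g.\ $h'(u)=\cb(\cb+1)u^{\cb-1}(u-1)<0$ for all $u>1$, and $h''\leq 0$ for all $u>1$ in the second argument) in fact hold on the full half-line, so your global statements are valid and lead to the same constants $2\ln 2-1$, $\tilde{\gamma}_{\nu}$, $2(1-\ln 2)$ evaluated at the transition point.
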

We demonstrate this result as a corollary of the technical lemma below.
\begin{lemma}\label{lem:LB_progress}
Consider function $t\mapsto\psi_{\nu}(t)$ defined in eq. \eqref{eq:psi_def} with unique maximum $t^{\ast}_{\nu}$ as described in eq. \eqref{eq:t_opt}. It holds that
\begin{equation}
\label{eq:psi_opt}
\psi_{\nu}(t^{\ast}_{\nu})=\left\{\begin{array}{ll}
\frac{1}{\delta}\left((1+\frac{\xi}{\delta})\ln\left(1+\frac{\delta}{\xi}\right) - 1\right) & \text{ if }\nu=2, \\
\frac{1}{\delta} \left(1-\frac{\ca\cb\xi}{\delta}+\frac{\ca\cb\xi}{\delta}\left(1-\frac{1}{\cb}\frac{\delta}{\xi} \right)^{\cb+1}\right) & \text{ if }\nu\in(2,3),\\
\frac{1}{\delta}\left(1- \frac{\xi}{\delta} \ln \left(1+\frac{\delta}{\xi}\right) \right) & \text{ if }\nu=3.
\end{array}\right.
\end{equation}
where $\ca\eqdef \frac{4-\nu}{2(3-\nu)}$ and $\cb\eqdef \frac{2-\nu}{4-\nu} < 0$. Moreover, the following lower bound holds
\begin{equation}
\label{eq:psi_lower}
\psi_{\nu}(t^{\ast}_{\nu})\geq\left\{\begin{array}{ll}
\frac{2 \ln 2-1}{\delta} \min\{1,\frac{\delta}{\xi}\} & \text{ if }\nu=2, \\
\frac{\tilde{\gamma}_{\nu}}{\delta} \min\left\{1, -\frac{\delta}{\xi \cb}\right\} & \text{ if }\nu\in(2,3),\\
\frac{1- \ln 2}{\delta} \min\{1, \frac{\delta}{\xi}\}& \text{ if }\nu=3.\\
\end{array}\right.
\end{equation}
where
\begin{equation}\label{eq:tildegamma}
 \tilde{\gamma}_{\nu}\eqdef 1+\frac{4-\nu}{2(3-\nu)}\left(1-2^{2(3-\nu)/(4-\nu)}\right).
\end{equation}
\end{lemma}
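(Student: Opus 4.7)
The strategy splits into two independent pieces: first, compute the closed form for $\psi_\nu(t^{\ast}_\nu)$ in \eqref{eq:psi_opt} by direct substitution of the explicit maximizer from \eqref{eq:t_opt_new} into the definition $\psi_\nu(t)=t-\xi\omega_\nu(t\delta)t^2$, using the formulas for $\omega_\nu$ from \eqref{eq:OmegaBreg}; second, reduce the lower bound \eqref{eq:psi_lower} to a one-variable inequality in the ratio $u\eqdef\delta/\xi$ and establish it by splitting on $u\leq 1$ vs.\ $u\geq 1$.

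For the closed form, each regime is algebra but differs in flavor. In the $\nu=3$ case, one has $t_3^{\ast}=1/(\delta+\xi)$, so $s\eqdef t_3^{\ast}\delta=\delta/(\delta+\xi)$ and $1-s=\xi/(\delta+\xi)$; plugging into $\omega_3(s)(t_3^{\ast})^2=(-s-\ln(1-s))/\delta^2$ and telescoping against the leading $t_3^{\ast}$ gives exactly the asserted $\frac{1}{\delta}(1-\tfrac{\xi}{\delta}\ln(1+\delta/\xi))$. The $\nu=2$ case proceeds identically with $t_2^{\ast}\delta=\ln(1+\delta/\xi)$ and $\omega_2(s)=(e^s-s-1)/s^2$, so that $\omega_2(t_2^{\ast}\delta)(t_2^{\ast})^2=(\delta/\xi-\ln(1+\delta/\xi))/\delta^2$, and regrouping yields the stated form. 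The intermediate case reduces to substituting $1-t_\nu^{\ast}\delta=(1+\tfrac{\delta}{\ca\xi\cb})^{-1/\cb}$ (with $\ca,\cb$ as in the statement) into the closed form of $\omega_\nu$, after which a single cancellation delivers the claimed expression.

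For the lower bound, let $u=\delta/\xi$ and write $\psi_\nu(t_\nu^{\ast})=\tfrac{1}{\delta}\Phi_\nu(u)$. We show $\Phi_\nu(u)\geq c_\nu\min\{1,u\}$ with the stated constants by treating two subcases. In the $\nu=3$ example, $\Phi_3(u)=1-\tfrac{1}{u}\ln(1+u)$: for $u\geq 1$, monotonicity of $u\mapsto\ln(1+u)/u$ (decreasing, since $\ln(1+u)\geq u/(1+u)$) gives $\Phi_3(u)\geq\Phi_3(1)=1-\ln 2$; for $u\leq 1$, Taylor expansion yields $\Phi_3(u)=u/2-u^2/3+O(u^3)$, which together with concavity of $\Phi_3$ on $[0,1]$ (verifiable by one more differentiation) and the endpoint matches $\Phi_3(0)=0$, $\Phi_3(1)=1-\ln 2$ forces $\Phi_3(u)\geq(1-\ln 2)u$. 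The $\nu=2$ argument is structurally identical, with $\Phi_2(u)=(1+1/u)\ln(1+u)-1$, endpoint $\Phi_2(1)=2\ln 2-1$, and Taylor leading behavior $u/2$ near $0$ (so $1/2>2\ln 2-1$ closes the small-$u$ case).

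The main obstacle is the intermediate regime $\nu\in(2,3)$: here both the closed-form simplification and the monotonicity/concavity verification are encumbered by the appearance of the negative exponent $\cb$ and the power $(1-u/\cb)^{\cb+1}$. I would attack it by making the substitution $w=\delta/(\ca\xi\cb)$ (so that the derivative condition for $t_\nu^{\ast}$ becomes a clean power equation), rewriting $\Phi_\nu$ as a function of $w$, and checking the two subcase bounds by applying Bernoulli-type inequalities for $(1+w)^p$. As a consistency check I would verify that the limits $\nu\to 2^{+}$ and $\nu\to 3^{-}$ of the resulting $\tilde\gamma_\nu$ match the boundary constants (the latter yields $1-\ln 2$, in agreement with the remark following Lemma~\ref{lem:tildedelta}), which both validates the closed form and pins down the constant $\tilde\gamma_\nu$ defined in \eqref{eq:tildegamma}. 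Finally, Lemma~\ref{lem:Delta_lower} follows by specializing $\delta=M_f\delta_\nu(x^k)$ and $\xi=\ce(x^k)^2/\gap(x^k)$, and Lemma~\ref{lem:tildedelta} is then immediate upon bounding $\ce(x^k)\leq \sqrt{L_{\nabla f}}\beta(x^k)\leq\sqrt{L_{\nabla f}}\diam(\scrX)$.
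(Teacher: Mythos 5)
Your treatment of the $\nu=2$ and $\nu=3$ cases is sound, and your concavity-based argument is a legitimate alternative to the paper's. The paper works with $t=\xi/\delta$ and establishes, for each of the two subcases, monotonicity of $\phi(t)$ and of $t\phi(t)$ respectively; you instead work with $u=\delta/\xi$ and invoke monotonicity of $\Phi_\nu$ on $u\geq 1$ plus concavity of $\Phi_\nu$ on $[0,1]$ (which, given $\Phi_\nu(0)=0$, is actually a slightly stronger statement than the paper's monotonicity of $\Phi_\nu(u)/u$). Both deliver the bound $\Phi_\nu(u)\geq c_\nu\min\{1,u\}$ from the endpoint value $\Phi_\nu(1)$, so either route is fine once the calculus is verified.

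However, there is a genuine gap: the $\nu\in(2,3)$ case is left as a plan rather than a proof, and it is by far the technically hardest part of the lemma. You propose the substitution $w=\delta/(\ca\xi\cb)$ and ``Bernoulli-type inequalities for $(1+w)^p$,'' but you do not carry this through, and it is not obvious that a Bernoulli-style one-liner closes it. The paper's actual argument requires considerably more structure: after setting $u=1-\tfrac{1}{\cb}\tfrac{\delta}{\xi}$ (so $u>1$) and writing $\psi_\nu(t_\nu^\ast)=\tfrac{1}{\delta}\gamma(u)$ with $\gamma(u)=1+\tfrac{\ca}{u-1}-\tfrac{\ca\,u^{\cb+1}}{u-1}$, one must split at $u=2$. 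On $u\geq 2$, showing $\gamma'(u)\geq 0$ reduces to showing an auxiliary function $h(u)=1-(\cb+1)u^{\cb}+\cb u^{\cb+1}\leq 0$, which needs a further sign analysis of $h'$ and a check that $h(2)\leq 0$ (itself a claim about a function of $\cb\in(-1,0)$). On $u\in(1,2]$, showing $\gamma(u)/(u-1)$ is non-increasing requires a second, different auxiliary function whose sign one controls via $h''$. Only then is $\tilde\gamma_\nu=\gamma(2)$ identified and shown to lie in $(0,1)$, partly via the limits $\nu\to 2^+$ and $\nu\to 3^-$ you mention. Your consistency check on those limits is a nice sanity test, but it does not substitute for the two monotonicity arguments; as written, the intermediate-$\nu$ bound is unproven.
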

\begin{proof}
We organize the proof according to the value of $\nu\in[2,3]$. 

\paragraph{The case $\nu=2$: } Since $\psi_{2}(t)=t-\frac{\xi}{\delta^{2}}[e^{t\delta}-t\delta -1],$  once we plug in $t_{2}^{\ast}$ from eq. \eqref{eq:t2} we arrive, after some computations, at 
\begin{align*}
\psi_{2}(t_{2}^{\ast})=\frac{1}{\delta}\left((1+\frac{\xi}{\delta})\ln(1+\frac{\delta}{\xi})-1\right)
\end{align*}
We next establish the lower bound formulated in \eqref{eq:psi_lower}. Denote $\phi(t)\eqdef  (1+t)\ln\left(1+\frac{1}{t}\right) - 1$. Then $\psi(t^{\ast}_{2}) = \phi(\frac{\xi}{\delta})/\delta$. At the same time,
\[
\frac{\dif \phi(t)}{\dif t} = \ln\left(1+\frac{1}{t}\right) + (1+t) \cdot \frac{t}{1+t} \cdot \left(-\frac{1}{t^2}\right)= \ln\left(1+\frac{1}{t}\right) - \frac{1}{t} <0.
\]
Thus, $\phi(t)$ is decreasing and $\phi(t) \geq \phi(1) = 2 \ln 2-1$ when $t \in (0,1]$.

Let us now consider the function $t\mapsto \frac{\phi(t)}{1/t}$.
\begin{align*}
\frac{\dif}{\dif t}\left(\frac{\phi(t)} {1/t}\right)=\phi(t)+t\phi'(t)= (2t+1) \ln\left(1+\frac{1}{t}\right) -2 \geq 0.
\end{align*}
Hence, $\frac{\phi(t)}{1/t} \geq \phi(1) = 2 \ln 2-1$ when $t \in (1,+\infty)$. Combining these two cases, we see that 
\begin{equation}\label{eq:psi2}
\psi_{2}(t^{\ast}_{2})=\frac{1}{\delta}\phi(\xi/\delta)\geq (2\ln(2)-1)\min\{1/\delta,1/\xi\}. 
\end{equation}
\paragraph{The case $\nu\in(2,3)$:} 
A computation shows that 

\begin{align*}
\psi_{\nu}(t_{\nu}^{\ast})&=\frac{1}{\delta}\left[1-\frac{4-\nu}{2(3-\nu)}\left(1+\frac{\delta}{\xi}\frac{4-\nu}{\nu-2}\right)^{\frac{2-\nu}{4-\nu}}\right]+\frac{\xi}{\delta^{2}}\frac{(\nu-2)}{2(3-\nu)}\left[1-\left(1+\frac{\delta}{\xi}\frac{4-\nu}{\nu-2}\right)^{\frac{2-\nu}{4-\nu}}\right].
\end{align*}

Set $\ca\eqdef \frac{4-\nu}{2(3-\nu)}>0$ and $\cb\eqdef \frac{2-\nu}{4-\nu}<0$. Then, setting $u=1-\frac{1}{\cb}\frac{\delta}{\xi}$, we see that 
\begin{align*}
\psi_{\nu}(t^{\ast}_{\nu})&=\frac{1}{\delta}\left(1-\frac{\xi\ca\cb}{\delta}-\ca u^{\cb}+\ca\cb\frac{\xi}{\delta} u^{\cb}\right)\\
&= \frac{1}{\delta}\left[1-\frac{\ca\cb\xi}{\delta}+\frac{\ca\cb\xi}{\delta}\left(1-\frac{1}{\cb}\frac{\delta}{\xi}\right)^{\cb+1}\right]
\end{align*}

To verify the lower bound, we rewrite $\psi_{\nu}(t^{\ast}_{\nu})$ as follows: 
\begin{align*}
\psi_{\nu}(t^{\ast}_{\nu})&=\frac{1}{\delta}\left(1-\ca u^{\cb}+\frac{\ca}{u-1}(1-u^{b})\right)\\
&=\frac{1}{\delta}\left(1+\frac{\ca}{u-1}-\frac{\ca u^{\cb+1}}{u-1}\right)\\
&=\frac{1}{\delta}\gamma(u),
\end{align*}
where $\gamma(u)\eqdef 1+\frac{\ca}{u-1}-\frac{\ca u^{\cb+1}}{u-1}$. Our next goal is to show that, for $u \in [2,+\infty)$, $\gamma(u)$ is below bounded by some positive constant and, for $u \in (1,2]$, $\gamma(u)$ is below bounded by some positive constant multiplied by $u-1$.

\textit{1. $u \in [2,+\infty)$.} We will show that $\gamma'(u) \geq 0$, whence $\gamma(u) \geq \gamma(2)$. Thus, we need to show that
\begin{align*}
0 \leq & 
\gamma'(u)= -\frac{\ca }{(u-1)^2} \underbrace{\left(1-(\cb+1)u^\cb+\cb u^{\cb+1}\right)}_{=h(u)}. 
\end{align*}
Since $\ca>1$, to show that $\gamma'(u) \geq 0$ it is enough to show that $h(u) \leq 0$. Since $\cb \in(-1,0)$ and $t\geq 2$,
\[
h'(u) = \cb(\cb+1)u^\cb- \cb(\cb+1) u^{\cb-1} = \cb(\cb+1)u^{\cb-1}(u-1) \leq 0.
\]
Whence, $h(u) \leq h(2)$ for all $u \in [2,+\infty)$. It remains to show that $h(2) \leq 0$. Let us consider $h(2) =\varphi(\cb):= 1-(\cb+1)2^\cb+\cb 2^{\cb+1} = 1+\cb 2^\cb-2^\cb$ as a function of $\cb\in(-1,0)$. Clearly, $\varphi(-1)=\varphi(0)=0$, and it is easy to check via the intermediate value theorem that $\varphi(b)<0$ for all $b\in(-1,0)$. We conclude that for $u\geq 2$ we get $\psi_{\nu}(t^{\ast}_{2})\geq \frac{1}{\delta}\gamma(2)$.

\textit{2. $t \in (1,2]$.} We will show that $ \frac{\dif}{\dif u} \left(\gamma(u)/(u-1)\right) \leq 0$, whence $\gamma(u) \geq (u-1)\gamma(2)$. Thus, we need to show that
\begin{align*}
0 & \geq \frac{\dif}{\dif t} \left( \frac{1}{u-1}+\frac{\ca}{(u-1)^2}-\frac{\ca u^{\cb+1}}{(u-1)^2}\right) \\
& = \frac{1}{(u-1)^3} \left( -u + 1 -2 \ca + \ca(\cb+1)u^{\cb} - \ca(\cb-1)u^{\cb+1} \right)\equiv \frac{1}{(u-1)^{3}}h(u). 
\end{align*}
Therefore, our next step is to show that $h(u) \leq 0$. We have
\begin{align*}
h'(u) & =-1 + \ca (\cb+1) \cb u^{\cb-1}-\ca(\cb-1)(\cb+1) u^{\cb},\\
h''(u) & =\ca\cb (\cb+1) (\cb-1) u^{\cb-2}-\ca(\cb-1)\cb(\cb+1) u^{\cb-1} \\
& = \ca \cb(\cb+1) (\cb-1) u^{\cb-2} (1-u).
\end{align*}
By definition, $\ca(\cb+1) = 1$. Hence, since $u >1$ and $\cb\in(-1,0)$, we observe that $h''(u) \leq 0$. Thus, $h'(u) \leq h'(1) = 0$, and consequently, $h(u) \leq h(1) = 0$, for all $u\in(1,2]$. This proves the claim $\gamma(u)/(u-1) \geq \gamma(2)$ for $u\in(1,2]$.

Combining both cases, we obtain that $\gamma(u) \geq \min\{\gamma(2), (u-1) \gamma(2)\}$, where $\gamma(2)=1-\ca+\ca 2^{1/\ca}$, using the fact that $\cb+1=1/\ca$. Unraveling this expression by using the definition of the constant $\ca$, we see that $\gamma(2)$ depends only on the self-concordance parameter $\nu\in(2,3)$. In light of this, let us introduce the constant 
\begin{equation}\label{eq:tildegamma}
\tilde{\gamma}_{\nu}\eqdef 1+\frac{4-\nu}{2(3-\nu)}\left(1-2^{2(3-\nu)/(4-\nu)}\right).
\end{equation}
Observe that $\tilde{\gamma}_{2}=0$ and, by a simple application of l'H\^{o}pital's rule, $\lim_{\nu\uparrow 3}\hat{\gamma}_{\nu}=1-\log(2)\in(0,1)$.
Hence $\gamma(2)\equiv\tilde{\gamma}_{\nu}\in(0,1)$ for all $\nu\in(2,3)$. We conclude, 
\begin{equation}\label{eq:t23}
\psi_{\nu}(t^{\ast}_{\nu})\geq \frac{\tilde{\gamma}_{\nu}}{\delta} \min\left\{1,\frac{-1}{\cb} \frac{\delta}{\xi}\right\}
\end{equation}

\paragraph{The case $\nu=3$:}
A direct substitution for $\psi_{3}(t)$ gives us 
\begin{equation}
\label{eq:nu=3}
\psi_{3}(t^{\ast}_{3})=\frac{1}{\delta}+\frac{\xi}{\delta^{2}}\ln\left(\frac{\xi}{\delta+\xi}\right). 
\end{equation}
Denote $u=\xi/\delta$. Then $t^{\ast}_{3}=\frac{1}{\delta+\xi}$, so that 
\[
\psi_{3}(t^{\ast}_{3})=\frac{1}{\delta}\left[1+u\ln\left(\frac{u}{u+1}\right)\right].
\]

Consider the function $\phi:(0,\infty)\to(0,\infty)$, given by  $\phi(t):=1+ t \ln\left(\frac{t}{1+t}\right)$. Then, $\psi_{3}(t^{\ast}_{3})=\frac{1}{\delta}\phi(\xi/\delta)$. For $t\in(0,1)$, one sees 
\begin{align*}
\phi'(t) &= \ln\left(\frac{t}{1+t}\right) + t \frac{1+t}{t}\left(\frac{1}{1+t}-\frac{t}{(1+t)^2}  \right) = \ln\left(1- \frac{1}{1+t}\right)  + \frac{1}{1+t} < 0.
\end{align*}
Consequently, $\phi(t)$ is decreasing for $t\in(0,1)$. Hence, $\phi(t) \geq \phi(1) = 1-\ln 2$, for all $t \in (0,1)$. On the other hand, if $t \geq 1$, 
\begin{align*}
\frac{\dif}{\dif t}\left(\frac{\phi(t)} {1/t}\right)=\frac{\dif}{\dif t}(t\phi(t) ) =
1 + 2t \ln\left(\frac{t}{1+t}\right)  + \frac{t}{1+t}\geq 0.
\end{align*}
Hence, $t\mapsto \frac{\phi(t)} {1/t}$ is an increasing function for $t\geq 1$, and thus $\phi(t) \geq \frac{1-\ln 2}{t}$, for all $t\geq 1$. Summarizing these two cases we see 
\begin{equation}\label{eq:t3}
\psi_{3}(t^{\ast}_{3})\geq \frac{1}{\delta}\min\{1,\delta/\xi\}(1-\ln(2))=(1-\ln(2))\min\{1/\delta,1/\xi\}.
\end{equation}
\end{proof}

\begin{proof}[Proof of Lemma \ref{lem:Delta_lower}]
Recall that $\eta_{x,M_{f},\nu}(t)=\gap(x)\psi_{\nu}(t).$ By identifying the parameters appropriately, we can give the proof of Lemma \ref{lem:Delta_lower} as a straightforward exercise derived from Lemma \ref{lem:LB_progress}. We provide the explicit derivation for each GSC parameter $\nu$ below. 

\begin{itemize}
\item[$\nu=2$:] Substitute in \eqref{eq:t2} the parameter values $\xi=\frac{\ce(x)^{2}}{\gap(x)}$ and $\delta=M_{f}\delta_{2}(x)=M_{f}\beta(x)$, the lower bound turns into 
\begin{equation}
 \psi_{2}(\ct_{M_{f},2}(x))\geq\frac{2\ln(2)-1}{\ce(x)}\min\left\{\frac{\ce(x)}{M_{f}\beta(x)},\frac{\gap(x)}{\ce(x)}\right\}.
 \end{equation}
Hence, 
\begin{align*}
\Delta_{k}&\geq \gap(x^{k})\frac{2\ln(2)-1}{\ce(x)}\min\left\{\frac{\ce(x)}{M_{f}\beta(x)},\frac{\gap(x)}{\ce(x)}\right\}=\frac{2\ln(2)-1}{\ce(x)}\min\left\{\frac{\ce(x^{k})\gap(x^{k})}{M_{f}\beta(x^{k})},\frac{\gap(x^{k})^{2}}{\ce(x^{k})}\right\}.
\end{align*}
\item[$\nu\in(2,3)$:] Substitute in \eqref{eq:t23} the parameter values $\delta\equiv M_{f}\delta_{\nu}(x)=\frac{\nu-2}{2}M_{f}\beta(x)^{3-\nu}\ce(x)^{\nu-2},\xi\equiv \frac{\ce(x)^{2}}{\gap(x)}$, so that 
\begin{equation}
\psi_{\nu}(\ct_{M_{f},\nu}(x))\geq\tilde{\gamma}_{\nu}\min\left\{\frac{1}{\frac{\nu-2}{2}M_{f}\beta(x)^{3-\nu}\ce(x)^{\nu-2}},\frac{-1}{\cb}\frac{\gap(x)}{\ce(x)^{2}}\right\}.
\end{equation}
Hence, $\Delta_{k}\geq \tilde{\gamma}_{\nu}\min\left\{\frac{\gap(x^{k})}{\frac{\nu-2}{2}M_{f}\beta(x^{k})^{3-\nu}\ce(x^{k})^{\nu-2}},\frac{-1}{\cb}\frac{\gap(x^{k})^{2}}{\ce(x^{k})^{2}}\right\}.$
\item[$\nu=3$:] Substitute in \eqref{eq:t3} the parameter values $\delta\equiv \delta_{3}(x)=\frac{M_{f}}{2}\ce(x),\xi\equiv \frac{\ce(x)^{2}}{\gap(x)}$, to get 
\begin{equation}
\psi_{3}(\ct_{3}(x))\geq \frac{2(1-\ln(2))}{M_{f}\ce(x)}\min\left\{1,\frac{M_{f}\gap(x)}{\ce(x)}\right\}.
\end{equation}
Hence, $\Delta_{k}\geq \frac{2(1-\ln(2))}{M_{f}\ce(x^{k})}\min\left\{\gap(x^{k}),\frac{M_{f}\gap(x^{k})^{2}}{\ce(x^{k})}\right\}.$

\end{itemize}
\end{proof}

\begin{proof}[Proof of Lemma \ref{lem:tildedelta}]
Use the estimates $\beta(x)\leq\diam(\scrX)$ and $\ce(x)\leq \sqrt{L_{\nabla f}}\beta(x)\leq \sqrt{L_{\nabla f}}\diam(\scrX)$ in the expressions provided in Lemma \ref{eq:Delta_lower_opt_step}.
\end{proof}

\end{appendix}

\bibliographystyle{plainnat}
\bibliography{mybib}
\end{document}